\newcommand\cca[1]{{%
  \ooalign{\raisebox{-.7ex}{\larger[5]$\circlearrowleft$}\cr
    \hidewidth$\,#1$\hidewidth}}}
\newtheorem{thm}{Theorem}[section]
\newtheorem{prop}[thm]{Proposition}
\newtheorem{cor}[thm]{Corollary}
\newtheorem{lem}[thm]{Lemma}
\theoremstyle{definition}
\newtheorem{defi}[thm]{Definition}
\newtheorem{claim}{Claim}[thm]
\newtheorem{rem}[thm]{Remark}
\newtheorem{ex}[thm]{Example}
\newtheorem{que}[thm]{Question}
\newtheorem{prob}[thm]{Problem}
\newcommand{\U}{\mathcal{U}}
\newcommand{\varep}{\varepsilon}
\newcommand{\A}{\mathcal A}
\newcommand{\G}{\mathcal G}
\newcommand{\M}{\mathcal M}
\newcommand{\Q}{\mathbb{Q}}
\newcommand{\R}{\mathbb{R}}
\newcommand{\GG}{\mathbb{G}}
\newcommand{\N}{\mathbb{N}}
\newcommand{\K}{\mathcal{K}}
\newcommand{\al}{\alpha}
\newcommand{\ga}{\gamma}
\newcommand{\de}{\delta}
\newcommand{\la}{\lambda}
\newcommand{\mc}[1]{\mathcal{#1}}
\newcommand{\mr}[1]{\mathrm{#1}}
\newcommand{\sig}{\sigma}
\def\acts{\curvearrowright}
\newcommand{\vertiii}[1]{{\left\vert\kern-0.25ex\left\vert\kern-0.25ex\left\vert #1 
    \right\vert\kern-0.25ex\right\vert\kern-0.25ex\right\vert}}
\DeclareMathOperator{\Age}{Age}
\DeclareMathOperator{\Flim}{Flim}
\DeclareMathOperator{\Iso}{Iso}
\DeclareMathOperator{\Emb}{Emb}
\DeclareMathOperator{\osc}{osc}
\DeclareMathOperator{\id}{id}
\newcommand{\nrm}[1]{\|#1\|}
\begin{document}

\title{Fra\"iss\'e and Ramsey properties of Fr\'echet spaces}
\date{\today}

\author[J. K. Kawach]{Jamal K. Kawach}
\thanks{}
\address{Department of Mathematics, University of Toronto, Toronto, Canada, M5S 2E4.}
\email{jamal.kawach@mail.utoronto.ca}
\urladdr{https://www.math.toronto.edu/jkawach}

\author[J. L\'opez-Abad]{Jordi L\'opez-Abad}
\thanks{}
\address{Departamento de Matem\'aticas Fundamentales, Facultad de Ciencias, UNED, 28040 Madrid, Spain}
\email{abad@mat.uned.es}
\urladdr{}

\subjclass[2010]{Primary 46A61,  46B20, 05D10; Secondary 46A04, 37B05, 46M40}
\keywords{Fr\'echet space, multi-seminormed space, Gurarij space, ultrahomogeneity, Fra\"iss\'e limit, Ramsey property, extreme amenability}
\thanks{\textcolor{black}{J.K.K. was partially supported by a Queen Elizabeth II Graduate Scholarship in Science and Technology. J.L.-A. was
partially supported by the Spanish Ministry of Science and Innovation grant PID2019-107701GB-I00 (Spain)}}

\begin{abstract}
We develop the theory of Fra\"iss\'e limits for classes of finite-dimensional multi-seminormed spaces, which are defined to be vector spaces equipped with a finite sequence of seminorms. We define a notion of a Fra\"iss\'e Fr\'echet space and we use the Fra\"iss\'e correspondence in this setting to obtain many examples of such spaces. This allows us to give a Fra\"iss\'e-theoretic construction of $(\GG^\omega, (\|\cdot\|_n)_{n<\omega})$, the separable Fr\'echet space of almost universal disposition for the class of all finite-dimensional Fr\'echet spaces with an infinite sequence of seminorms. We then identify and prove an approximate Ramsey property for various classes of finite-dimensional multi-seminormed spaces using known approximate Ramsey properties of normed spaces. A version of the Kechris-Pestov-Todor\v{c}evi\'c correspondence for approximately ultrahomogeneous Fr\'echet spaces is also established and is used to obtain new examples of extremely amenable groups. In particular, we show that the group of surjective linear seminorm-preserving isometries of $(\GG^\omega, (\|\cdot\|_n)_{n<\omega})$ is extremely amenable.
\end{abstract}

\maketitle

\section{Introduction}

The study of universal Banach spaces was initiated by Banach and Mazur \cite{Banach}, who showed that every separable Banach space embeds isometrically into $C([0,1])$ (see also \cite[Corollary 12.14]{Carothers} or \cite[Theorem 8.7.2]{Semadeni}). Motivated by the Rotation Problem of Mazur, Gurarij \cite{Gurarij} constructed a separable Banach space $\GG$, now known as the \emph{Gurarij space}, which is universal for separable Banach spaces and which satisfies the following additional property: For every $\varep > 0$, every pair $X \subseteq Y$ of finite-dimensional Banach spaces, and every isometric embedding $f : X \rightarrow \GG$, there is an $\varep$-isometric embedding $\tilde{f} : Y \rightarrow \GG$ which extends $f$. In other words, $\GG$ is \emph{of almost universal disposition} for the class of all finite-dimensional Banach spaces. Lusky \cite{Lusky} later showed that the Gurarij space is unique up to isometry, in the sense that any other space which is of almost universal disposition for all finite-dimensional Banach spaces must be isometrically isomorphic to $\GG$. (A simpler proof of the uniqueness and universality of the $\GG$ can be found in \cite{KS}.) As a consequence of the fact that $\GG$ is of almost universal disposition, one can show that $\GG$ is \emph{approximately ultrahomogeneous}: For every finite-dimensional subspace $X \subseteq \GG$, every $\varep > 0$ and every isometric embedding $f : X \rightarrow \GG$, there is a surjective linear isometry $g : \GG \rightarrow \GG$ such that $\|g(x) - f(x)\| \leq \varep$ for each $x \in X$. For more information on the Mazur Rotation Problem and related concepts in the context of Banach spaces, we refer the reader to the recent survey \cite{CFR}.

Similar developments have taken place in the setting of Fr\'echet spaces: Mazur and Orlicz (see, e.g., \cite[p. 101]{Rolewicz}) showed that $C(\R)$ is homeomorphically universal for the class of all separable Fr\'echet spaces, in the sense that every Fr\'echet space admits a linear homeomorphism into $C(\R)$. Moving toward the isometric theory of Fr\'echet spaces, Bargetz, K\k{a}kol and Kubi\'s \cite{BKK} recently proved the existence of an analogue of the Gurarij space in the setting of Fr\'echet spaces equipped with a fundamental system of seminorms; they construct a separable Fr\'echet space $(\GG^\omega, (\|\cdot\|_n)_{n < \omega})$ with a suitable sequence of seminorms which is approximately ultrahomogeneous (with respect to isometric embeddings which preserve each given seminorm simultaneously) for the class of all finite-dimensional Fr\'echet spaces equipped with an infinite sequence of seminorms. A similar space is also constructed in \cite{BKK} for the class of all finite-dimensional \emph{graded} Fr\'echet spaces, i.e. Fr\'echet spaces equipped with an increasing sequence of seminorms.

Interest in universal spaces such as the Gurarij space is partially motivated by the study of their groups of linear isometries. In particular, the isometry group $\Iso(\GG)$ of the Gurarij space has been an object of intense investigation in recent years (see, for instance, \cite{BLLM, BY1}). One way to develop a better understanding of Polish groups such as $\Iso(\GG)$ is via their topological dynamics; a relevant property here is that of \emph{extreme amenability}. Recall that a topological group $G$ is extremely amenable if every continuous action of $G$ on a compact Hausdorff space $X$ has a common fixed point, i.e. a point $x \in X$ such that $g \cdot x = x$ for all $g \in G$. Many examples of such groups have been obtained using a link --  known as the \emph{Kechris-Pestov-Todor\v{c}evi\'c (KPT) correspondence} -- between the Ramsey property and the extreme amenability of automorphism groups of ultrahomogeneous first-order structures. Examples of such structures come from \emph{Fra\"iss\'e theory}, which provides a correspondence between countable ultrahomogeneous structures and certain classes of finitely-generated first-order structures, known as \emph{Fra\"iss\'e classes}. Since Fra\"iss\'e's original paper \cite{Fraisse} (see also \cite{Hodges}), many other versions of the Fra\"iss\'e correspondence have been developed; one of the more prominent instances of this is the development the Fra\"iss\'e theory of \emph{metric structures} (i.e. metric spaces with additional compatible structure) due to Ben Yaacov \cite{BY}. Other presentations of Fra\"iss\'e theory in the context of structures from functional analysis, including ``non-commutative'' structures, have been developed recently in \cite{FLMT, Lupini2018}. Naturally, these events led to the establishment of the KPT correspondence in more general settings, including Banach spaces \cite{BLLM, FLMT}. In fact, the first successful application of the KPT correspondence for metric structures developed in \cite{MT} was recently achieved in \cite{BLLM}, where the authors show that $\Iso(\GG)$ is extremely amenable by proving an \emph{approximate Ramsey property} for the class of all finite-dimensional Banach spaces. Interestingly, the latter result makes crucial use of the Dual Ramsey Theorem of Graham and Rothschild \cite{GR}. In this paper we obtain analogous results in setting of Fr\'echet spaces and more generally for \emph{multi-seminormed} spaces, i.e. vector spaces equipped with a finite or infinite sequence of seminorms. In particular, we develop Fra\"iss\'e theory for multi-seminormed spaces together with the relevant versions of the KPT correspondence and the approximate Ramsey property. A similar approach was used in \cite{BLLM_2} to study different classes of exact operator spaces, where a sequence of norms is also an essential part of the structure. In the present paper we provide a Fra\"iss\'e-theoretic proof of the existence of the spaces constructed in \cite{BKK}, which were originally defined using properties of a universal (Fra\"iss\'e) operator on $\GG$ originally considered in \cite{GK}. Such an operator can be seen as a ``Gurarij'' version of the Rota universal operator on the separable infinite-dimensional Hilbert space (for which we refer the reader to, e.g., \cite[Section 4.1]{Lupini2018}). In addition to the known examples discussed above, we also obtain many new examples of Fr\'echet spaces with strong forms of approximate ultrahomogeneity, including examples which naturally arise from various combinations of Hilbert spaces and $L_p$ spaces. As a result, we obtain many new examples of extremely amenable groups; in particular, we show that the group $\Iso(\GG^\omega, (\|\cdot\|_n)_{n<\omega})$ of all surjective linear seminorm-preserving isometries of $(\GG^\omega, (\|\cdot\|_n)_{n<\omega})$ is extremely amenable.

The rest of the paper is organized as follows. In Section 2 we consider amalgamation and Fra\"iss\'e properties of finite-dimensional multi-seminormed spaces. In particular, we show that the class $\M_{<\omega}$ of all finite-dimensional vector spaces equipped with a finite sequence of seminorms is Fra\"iss\'e. Similarly, the classes consisting of all finite-dimensional vector spaces with a sequence of seminorms of length at most $n$ (for various $n\geq 1$) are shown to be Fra\"iss\'e as long as one restricts to \emph{separated} multi-seminormed spaces, where a space is separated if the natural topology induced by the associated sequence of seminorms is a Hausdorff topology. More generally, we show that Fra\"iss\'e classes of finite-dimensional normed spaces naturally give rise to Fra\"iss\'e classes of finite-dimensional multi-seminormed spaces, and we use this to obtain many examples of such classes. In Section 3 we develop a notion of a \emph{Fra\"iss\'e Fr\'echet space} and we obtain examples of such Fr\'echet spaces by taking \emph{Fra\"iss\'e limits} of classes of certain finite-dimensional multi-seminormed spaces. We also show that the separable Fr\'echet spaces of almost universal disposition constructed in \cite{BKK} can be realized as Fra\"iss\'e limits. Section 4 contains a proof of the analogue of the KPT correspondence for multi-seminormed spaces, which involves isolating a useful version of the approximate Ramsey property for such spaces. To conclude, we show how to transfer the approximate Ramsey property of classes of finite-dimensional normed spaces to the corresponding approximate Ramsey property for certain classes of multi-seminormed spaces, thus obtaining new examples of extremely amenable groups by way of the KPT correspondence.
\subsubsection*{Acknowledgments}

We are grateful to Félix Cabello Sánchez for several helpful conversations and remarks. We also thank the referee for many useful comments and corrections, as well as for suggesting Problem \ref{prob3} below.

\section{Fra\"iss\'e classes of finite-dimensional Fr\'echet spaces}

We use standard set-theoretic notation; in particular, $\omega = \{0, 1, \dots\}$ denotes the first countably infinite ordinal and $\N$ will denote $\omega \setminus \{0\}$. When convenient, an integer $n$ will be identified with the set $\{0,\dots, n-1\}$ of its predecessors.

Let $X$ be a Fr\'echet space, i.e. a locally convex Hausdorff topological vector space which is completely metrizable via a translation-invariant metric.  A \emph{fundamental system of seminorms} on $X$ is a sequence of seminorms $(\|\cdot\|_{X,n})_{n < \alpha}$ on $X$, where $\alpha \leq \omega$, such that the sets $$B_{n,\varep}(x) = \{y \in X : \max_{m < n} \|x-y\|_{X,m} < \varep\}, \, \, x \in X, n < \alpha, \varep > 0$$ form a basis for the topology of $X$. It is a standard fact that every Fr\'echet space admits a (non-unique) fundamental system of seminorms and so we can view a Fr\'echet space as a \emph{multi-seminormed space} of the above form, i.e. a (finite or infinite) tuple consisting of a topological vector space equipped with a sequence of seminorms. By definition, a fundamental system of seminorms is always \emph{separating}: For every non-zero $x \in X$ there is a seminorm $\|\cdot\|_{X,n}, n < \alpha$ such that $\|x\|_{X,n} \neq 0$; in this case we also say that $X$ is \emph{separated}, which happens precisely when the sequence of seminorms induces a Hausdorff topology on $X$. In this paper we will work more generally with multi-seminormed spaces of the form $\textbf{X} = (X, (\|\cdot\|_{X,n})_{n<\lambda_{\textbf{X}}})$, where $X$ is a vector space, $1 \leq \lambda_{\textbf{X}} \leq \omega$ is the \emph{length} of $\textbf{X}$, which is defined as the length of the associated sequence of seminorms (which we always assume is non-empty), and where $(\|\cdot\|_{X,n})_{n < \la_X}$ is a (finite or infinite) sequence of seminorms on $X$. We will always conflate the above tuple $\textbf{X}$ with its underlying space $X$, and so we will always write $\la_X$ for the length when the sequence of seminorms is understood. We point out that in general we do not require a multi-seminormed space to be separated. With this in mind, we reserve the term ``Fr\'echet space'' for a separated multi-seminormed space $(X, (\|\cdot\|_{X,n})_{n < \la_X})$ such that the topology generated by the seminorms $\|\cdot\|_{X,n}$ is complete. It is clear that in this case the sequence of seminorms $(\|\cdot\|_{X,n})$ forms a fundamental system of seminorms on $X$. Finally, following the terminology in \cite{BKK, vogt1987}, we say that a multi-seminormed space $(X, (\|\cdot\|_{X,n})_{n < \la_X})$ is \emph{graded} if, for all $x \in X$, $\|x\|_{X,n} \leq \|x\|_{X,m}$ whenever $n \leq m < \la_X$. For more information about (graded) Fr\'echet spaces, we refer the reader to \cite{Hamilton, MV, vogt1987}. The reader is also referred to \cite{Kutateladze, Simon} for more information about the general theory of multi-seminormed spaces (which are also sometimes called seminormed spaces or multinormed spaces in the literature).

Throughout this paper we will work with various subclasses of the class $\mc M$ of finite-dimensional multi-seminormed spaces equipped with a fundamental system of seminorms. Of particular interest will be the subclass $\G$ consisting of all graded $X \in \mc M$. Given any subclass $\K \subseteq \mc M$, and $\al \leq \omega$, we let $\K_{< \al}, \K_{\leq \al}$ and $\K_{=\al}$ denote the collections of all $X \in \K$ such that $\la_X < \al, \la_X \leq \al$ and $\la_X = \al$, respectively. For any such $\K$ we also let $\K^{\mathrm{sep}}$ denote the subclass of $\K$ consisting of all separated members of $\K$.

Given two multi-seminormed spaces $X$ and $Y$ and a linear mapping $f: X\to Y$, we say that a linear mapping $f:X\to Y$ is {\em multi-bounded} when $\la_X\le \la_Y$ and when for every $m<\la_X$ one has $$\nrm{f}_\mr{mb}:= \sup_{m<\la_X} \nrm{f}_{(X,\nrm{\cdot}_{X,m}), (Y,\nrm{\cdot}_{Y,m})}<\infty,$$ and where  as usual $ \nrm{f}_{(X,\nrm{\cdot}_{X,m}), (Y,\nrm{\cdot}_{Y,m})}$ is the (semi)norm of $f$ defined as $\sup_{\nrm{x}_{X,m}\le 1}\nrm{f(x)}_{Y,m}$. When working with the operator norm, we will usually suppress the notation and omit the reference to the underlying spaces when this is clear from context. Observe that since we are dealing with seminorms, this quantity might be infinite, even when $X$ is finite-dimensional.  A {\em multi-isomorphism} $f:X\to Y$ is linear bijection such that both $f$ and $f^{-1}$ are multi-bounded.  Observe also that an isomorphism is a Lipschitz mapping, when considering the metrics associated to the fixed seminorms on $X$ and $Y$.  

Much like in the class of normed spaces and others, we define a Banach-Mazur-like pseudometric, quantifying distances between isometric types. (The relevant notion of an isometry in this setting will be defined below.) We define
$$d_\mr{BM}(X,Y):=\log\inf_{f}\nrm{f}_\mr{mb} \nrm{f^{-1}}_\mr{mb},$$
where $f$ runs over all multi-isomorphisms (if any) between $X$ and $Y$. 

\begin{prop}
${\mc M}_{<\omega}$ and ${\mc G}_{<\omega}$ are $\sig$-compact, and consequently separable. 
\end{prop}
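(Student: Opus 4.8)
The plan is to present $\M_{<\omega}$ as a countable union of compact sets, each realized as a continuous image of an explicit compact space of tuples of seminorms on a fixed coordinate space. For $d,\ell\ge 1$ let $\M_{d,\ell}$ denote the members of $\M_{<\omega}$ of dimension $d$ and length $\ell$; since $\M_{<\omega}=\bigcup_{d,\ell\ge 1}\M_{d,\ell}$ is a countable union, it suffices to prove that each $\M_{d,\ell}$ is compact (and likewise with $\G$ in place of $\M$). Fix $d,\ell$, let $\nrm{\cdot}_2$ be the Euclidean norm on $\R^d$, and note that every $X\in\M_{d,\ell}$ can be written, after a choice of basis, as $(\R^d,(p_i)_{i<\ell})$ where the $p_i$ are seminorms whose maximum $p:=\max_{i<\ell}p_i$ is a norm — this last condition being precisely what it means for the $p_i$ to form a fundamental system of seminorms on a $d$-dimensional space. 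After one further linear change of basis, chosen by John's theorem applied to the unit ball of $p$, we may moreover assume $d^{-1/2}\nrm{x}_2\le p(x)\le\nrm{x}_2$ for all $x$, and hence $p_i(x)\le\nrm{x}_2$ for every $i<\ell$. Let $P_{d,\ell}$ be the set of all $\ell$-tuples $(p_i)_{i<\ell}$ of seminorms on $\R^d$ with $p_i\le\nrm{\cdot}_2$ for each $i$ and $\max_{i<\ell}p_i\ge d^{-1/2}\nrm{\cdot}_2$, viewed, by restricting each seminorm to the Euclidean sphere $S^{d-1}$, as a subset of $\big(C(S^{d-1})\big)^{\ell}$ with the uniform topology.

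The key claim is that $P_{d,\ell}$ is compact. It is relatively compact by the Arzel\`a--Ascoli theorem, because the seminorms occurring in its tuples are uniformly bounded (by $1$ on $S^{d-1}$) and uniformly $1$-Lipschitz for $\nrm{\cdot}_2$, as $|p_i(x)-p_i(y)|\le p_i(x-y)\le\nrm{x-y}_2$; and it is closed, because being a seminorm, being $\le\nrm{\cdot}_2$, and having maximum $\ge d^{-1/2}\nrm{\cdot}_2$ are all preserved under passing to limits. Now the map sending $(p_i)_{i<\ell}$ to the isometry type of $(\R^d,(p_i)_{i<\ell})$ is a surjection of $P_{d,\ell}$ onto $\M_{d,\ell}$ — surjective by the normalization of the previous paragraph, and with image inside $\M_{d,\ell}$ since $\max_i p_i\ge d^{-1/2}\nrm{\cdot}_2$ forces separatedness — and it is continuous, since uniform convergence of the defining seminorms drives the associated isometry types together (witnessed, in the pertinent metric, by the identity maps). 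Hence $\M_{d,\ell}$ is compact, $\M_{<\omega}=\bigcup_{d,\ell}\M_{d,\ell}$ is $\sigma$-compact, and $\M_{<\omega}$ is separable (being a continuous image of the separable space $\bigsqcup_{d,\ell}P_{d,\ell}$, or because $\sigma$-compact metrizable spaces are separable). The class $\G_{<\omega}$ is treated identically after intersecting each $P_{d,\ell}$ with the closed set of graded tuples $\{(p_i)_{i<\ell}:p_0\le p_1\le\cdots\le p_{\ell-1}\}$, noting that there $\max_{i<\ell}p_i=p_{\ell-1}$, so the normalization just places $p_{\ell-1}$ between $d^{-1/2}\nrm{\cdot}_2$ and $\nrm{\cdot}_2$.

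The one genuinely delicate step is that the normalization has to be a \emph{closed} condition. Separatedness — $\max_i p_i$ being a norm — is open but not closed among tuples of seminorms, since a uniform limit of separating tuples can degenerate (e.g.\ $p_i^{(k)}=k^{-1}\nrm{\cdot}_2$), so one cannot simply take the closure of the tuples bounded above by $\nrm{\cdot}_2$; the remedy is to bound $\max_i p_i$ also \emph{below} by a fixed multiple of $\nrm{\cdot}_2$, which is closed and, by John's theorem, attainable on every member of $\M_{d,\ell}$ by a single change of basis. (One can dispense with John's theorem by letting the constant $d^{-1/2}$ range over all $N^{-1}$, $N\in\N$, and taking one more countable union.) Finally, it is worth stressing that compactness here is with respect to the coarse topology in which a multi-seminormed space varies continuously with its sequence of seminorms — the one for which $P_{d,\ell}\twoheadrightarrow\M_{d,\ell}$ is continuous — and not with respect to $d_\mr{BM}$ itself: under $d_\mr{BM}$ a degenerate seminorm is never multi-isomorphic to a non-degenerate one and arrangements of the kernels of the seminorms already carry continuous invariants, so $\M_{<\omega}$ breaks into uncountably many clopen pieces and fails to be $\sigma$-compact.
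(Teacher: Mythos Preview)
Your proposal takes an unusual form: the first two paragraphs establish $\sigma$-compactness for a topology strictly coarser than $d_{\mr{BM}}$ (essentially the quotient topology from your parameter spaces $P_{d,\ell}$), while your final paragraph asserts that the proposition is \emph{false} for $d_{\mr{BM}}$ itself. Since the paper is unambiguously working with $d_{\mr{BM}}$ --- it says so in the proof, and the separability is later invoked to extract a countable $d_{\mr{BM}}$-dense sequence --- your first two paragraphs do not prove the proposition as stated; the map $P_{d,\ell}\to\M_{d,\ell}$ is not $d_{\mr{BM}}$-continuous, as you yourself note.

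Your final paragraph, however, is correct, and it exposes a genuine error in the paper. The paper's argument rests on the Claim that the finite-$d_{\mr{BM}}$-distance components of $\M_{<\omega}$ are exactly the (countably many) sets $\mc A_{\bar\al}$ indexed by the dimension data of kernel intersections. This Claim is false. Take $\R^2$ with four seminorms $\|(x,y)\|_i=|a_ix+b_iy|$, $i<4$, whose kernels are four pairwise distinct lines through the origin. Any multi-isomorphism $f$ must carry the $i$-th kernel of the source onto the $i$-th kernel of the target (multi-boundedness of $f$ and of $f^{-1}$ forces $f(\ker\|\cdot\|_i)=\ker\|\cdot\|_i$), and a linear bijection of $\R^2$ preserves the cross-ratio of four concurrent lines; hence two such spaces with different cross-ratios lie at $d_{\mr{BM}}$-distance $\infty$ despite sharing the same $\bar\al$. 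Varying the cross-ratio yields uncountably many clopen components, so $(\M_{<\omega},d_{\mr{BM}})$ is not separable, let alone $\sigma$-compact. The specific gap in the paper's inductive proof of the Claim is that the map $h(x_0+x_1):=f(x_0)+g(x_1)$ need not be multi-bounded: the projection onto $X_0$ along $X_1$ is unbounded for any seminorm $\|\cdot\|_k$ whose kernel is transverse to the splitting $X_0\oplus X_1$. So while your write-up does not prove the stated proposition, you have correctly diagnosed that the proposition, as stated, cannot be proved.
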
  
\begin{proof}
${\mc G}_{<\omega}$ is closed in ${\mc M}_{<\omega}$, so it suffices to prove that ${\mc M}_{<\omega}$ is $\sig$-compact. As in the case of the Banach-Mazur pseudometric on the class of finite-dimensional normed spaces, it is not difficult to see that the closed $d_\mr{BM}$-balls on ${\mc M}_{<\omega}$ are compact. But unlike for the normed space case, where for each dimension $k$ the diameter is finite (hence the Banach-Mazur compactum is compact), this is not the case for multi-seminormed spaces.  Given $(X,(\nrm{\cdot}_{X,k})_{k<\la_X})\in {\mc M}_{<\omega}$, let $\overline{\alpha}_X=(\alpha^X_s)_{s\subseteq \la_X}$ be defined for $s\subseteq \la_X$ by $\al_s^X:=  \dim \bigcap_{k\in s} \ker \nrm{\cdot}_{X,k}$ and $\al_\varnothing^X := \dim X$.  
\begin{claim}
 The classes of multi-seminormed spaces with finite distances (i.e. isomorphic classes) are exactly,  given a sequence $\bar\al=(\al_s)_{s\subseteq n}$ of positive integers,  the sets $\mc A_{\bar{\al}}:=\{X\in {\mc M}_{<\omega} \,:\,\bar\al_X=\bar \al\}$. 
\end{claim}  
This fact easily implies that ${\mc M}_{<\omega}$ is $\sig$-compact.   We prove now the claim. The sets $\mc A_{\bar{\al}}$ are closed under isomorphic images because  if  $f:X\to Y$ is a multi-isomorphism, then $\overline{\al}_X= \overline{\al}_Y$, because $$f( \bigcap_{k\in s} \ker \nrm{\cdot}_{X,k})= \bigcap_{k\in s} \ker \nrm{\cdot}_{Y,k} \text{ for every $s\subseteq \la_X=\la_Y$.}$$ This follows from the fact that $f$ is multi-bounded. The converse is also true: for suppose that $\overline{\al}_X=\overline{\al}_Y$ and let $\lambda = \lambda_X = \lambda_Y$. First, observe that $\al_\varnothing^X=\dim X$, so we have that $\dim X=\dim Y$. The argument is by induction on  the number $N=N_X=N_Y$ of non-empty $s\subseteq \la$ such that $\al_s^X>0$. If $N=0$, this means that all seminorms are in fact norms, so the desired result follows from the well-known fact that the norms on   finite-dimensional spaces of the same dimension are all equivalent. Suppose now that   $N>0$. Let us choose $k_0<\la$ such that $\al_{\{k_0\}}>0$. For suppose that $X,Y\in \mc A_{\overline{\al}}$. Let $X_0:=\ker \nrm{\cdot}_{X,k}$, $Y_0:=\ker \nrm{\cdot}_{Y,k}$, and let $X_1\subseteq X$ and $Y_1\subseteq Y$ be complementary subspaces of $X_0$ and $Y_0$ respectively. Then $N_{X_1}=N_{Y_1}<N$, so there is some multi-isomorphism $f: X_1\to Y_1$, and similarly $X_0':=(X_0, (\nrm{\cdot}_{X, k})_{k<\la_X, k\neq k_0})$ and $Y_0':=(Y_0, (\nrm{\cdot}_{Y, k})_{k<\la_X, k\neq k_0})$ satisfy that $N_{X_0'}=N_{Y_0'}<N$, so there is a   multi-isomorphism $g: X_0'\to Y_0'$. Then $h: X\to Y$ defined by $h(x_0+x_1):= f(x_0)+g(x_1)$ for $x_0\in X_0$ and $x_1\in X_1$ is a   multi-isomorphism. 
\end{proof}  

Given $\varep \geq 0$, let $\Emb_\varep(X,Y)$ denote the set of all injective $\emph{multi-$\varep$-isometric embeddings}$ from $X$ into $Y$, i.e. linear mappings $f: X \rightarrow Y$ such that $$\frac{1}{1+\varep} \|x\|_{X,m} \leq \|f(x)\|_{Y,m} \leq (1+\varep) \|x\|_{X,m}$$ for each $x \in X$ and $m < \la_X$. When $\varep = 0$, we simply refer to such a mapping as a \emph{multi-isometric embedding}; the collection of all such mappings $f : X \rightarrow Y$ will be denoted $\Emb(X,Y)$. Note that, unlike in the setting of normed spaces, a seminorm-preserving mapping need not be injective. Also, a seminorm-preserving mapping $f : X \rightarrow Y$ need not be continuous when $\la_X < \la_Y$. A \emph{multi-isometry} (resp. multi-$\varep$-isometry) is a surjective multi-isometric embedding (resp. multi-$\varep$-isometric embedding).

Below we will make use of the notion of a \emph{modulus} relative to a set $S$, i.e. a function $\varpi: S \times [0,\infty[\to [0,\infty[$ such that, for every $s \in S$, the function $\varpi(s, \cdot)$ is increasing and continuous at $0$ with value $0$. In our case, $S$ will be the product $\N \times (\N \cup \{\omega\})$.

\begin{defi}\label{Fraisse}
Let $\K$ be a class of finite-dimensional multi-seminormed spaces.
	\begin{enumerate}[(1)]
		\item $\K$ has the \emph{hereditary property} (HP) if $X \in \K$ whenever $Y \in \K$ and $\Emb(X,Y) \neq \varnothing$.
		\item $\K$ has the \emph{joint embedding property} (JEP) if for every $X, Y \in \K$ there is $Z \in \K$ such that $\Emb(X,Z)$ and $\Emb(Y,Z)$ are non-empty.
		\item $\K$ has the \emph{near amalgamation property} (NAP) if for every $\varep > 0$, every $X,Y,Z \in \K$ and every pair of multi-isometric embeddings $f_0 : X \rightarrow Y, f_1 : X \rightarrow Z$, there is $W \in \K$ together with multi-isometric embeddings $g_0 : Y \rightarrow W, g_1 : Z \rightarrow W$ such that $\|g_0 \circ f_0 - g_1 \circ f_1\|_n \leq \varep$ for each $n < \la_X$.
		\item $\K$ is an \emph{amalgamation class with modulus of stability $\varpi$} if $(\{0\}, \|\cdot\|) \in \K$ and for every $d \in \N$, $l \in \N \cup \{\omega\}$, $\varep > 0$ and $\de\ge 0$, every $X,Y,Z \in \K$ such that $\dim X = d$ and $\la_X = l$, and every pair of multi-$\delta$-isometric embeddings $f_0 : X \rightarrow Y, f_1 : X \rightarrow Z$, there is $W \in \K$ together with multi-isometric embeddings $g_0 : Y \rightarrow W, g_1 : Z \rightarrow W$ such that:
		\begin{enumerate}[(i)]
			\item $\|g_0 \circ f_0 - g_1 \circ f_1\|_n \leq \varpi(d,l,\de)+ \varep$ for each $n < \lambda_X$.
			\item $W$ is separated.
		\end{enumerate}
		\item $\K$ is \emph{Fra\"iss\'e} if it is a hereditary $d_{\mathrm{BM}}$-closed amalgamation class such that $\K \subseteq \M_{<\omega}$.
	\end{enumerate}
\end{defi}

Observe that every amalgamation class $\K$ automatically has the JEP since one can simply amalgamate over the trivial normed space. We remark that condition (4)(ii) does not appear in the usual presentation of metric Fra\"iss\'e theory (and in particular the Fra\"iss\'e theory of Banach spaces developed in \cite{FLMT}). However, it appears to be necessary when working with arbitrary multi-normed spaces in order to guarantee injectivity of certain seminormed-preserving mappings encountered when constructing a Fra\"iss\'e limit. This condition will be trivially satisfied by all collections $\K$ of interest. Conditions (4)(i) and (4)(ii) together imply that for any $X$ belonging to an amalgamation class $\K$, there is some separated $Y \in \K$ such that $\Emb(X,Y) \neq \varnothing$. We will make use of this fact without reference.

We now proceed to show that the classes considered above are amalgamation classes. The following is a particular case of a more general result explained in Proposition \ref{amalgamation}, where it is shown that classes of multi-seminormed spaces defined from amalgamation classes of normed spaces are also amalgamation classes. We present the proof here since a version of the relevant construction (which is essentially a pushout in the category of multi-seminormed spaces) will be used in the sequel.

\begin{prop}\label{NAP}
For each $\K \in \{\mc M,\G\}$ and each $\alpha \leq \omega$, the classes $\K_{=\alpha}^{\mathrm{sep}}, \K_{\leq \alpha}^{\mathrm{sep}}$ and $\K_{<\alpha}^{\mathrm{sep}}$ are amalgamation classes with modulus of stability $\varpi(d,l,\de) = 2\de$. Furthermore, $\K_{<\omega}$ is an amalgamation class with the same modulus of stability.
\end{prop}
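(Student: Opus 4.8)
The plan is to realise $W$ as a ``corrected pushout'' of the span $Y \xleftarrow{f_0} X \xrightarrow{f_1} Z$ in the category of multi-seminormed spaces, where the correction term compensates for the fact that $f_0$ and $f_1$ are only multi-$\de$-isometric. On the vector space $Y \oplus Z$, and for each index $n < \la_X$ (where all three spaces carry an $n$-th seminorm), set
$$\|(y,z)\|_n := \inf_{x \in X}\bigl(\,\|y - f_0(x)\|_{Y,n} + \|z + f_1(x)\|_{Z,n} + 2\de\,\|x\|_{X,n}\,\bigr).$$
This is a seminorm: it is the quotient seminorm of the seminorm $(y,z,x)\mapsto \|y - f_0(x)\|_{Y,n} + \|z + f_1(x)\|_{Z,n} + 2\de\|x\|_{X,n}$ on $Y \oplus Z \oplus X$ along the projection $(y,z,x)\mapsto(y,z)$. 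For the remaining indices $\la_X \le n < \max(\la_Y,\la_Z)$ — which occur only when the lengths vary, since in the $\K_{=\al}$ case $\la_X = \la_Y = \la_Z$ — there is no amalgamation constraint, so one lets $\|(y,z)\|_n$ be the sum of whichever of $\|y\|_{Y,n}$, $\|z\|_{Z,n}$ is defined, additionally including the term $\|y\|_{Y,\la_Y-1}$ (resp.\ $\|z\|_{Z,\la_Z-1}$) once $n$ exceeds the length of $Y$ (resp.\ of $Z$) when $\K = \G$, in order to keep the sequence of seminorms non-decreasing. Write $W_0$ for $Y \oplus Z$ equipped with these seminorms and put $g_0 : Y \to W_0$, $g_0(y) = (y,0)$, and $g_1 : Z \to W_0$, $g_1(z) = (0,z)$.

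The main point is that $g_0$ and $g_1$ are \emph{exact} multi-isometric embeddings, and this is exactly where the coefficient $2\de$ enters. Plugging $x = 0$ into the infimum gives $\|g_0(y)\|_n \le \|y\|_{Y,n}$, while for every $x \in X$, using $\|f_0(x)\|_{Y,n} \le (1+\de)\|x\|_{X,n}$ and $\|f_1(x)\|_{Z,n} \ge (1+\de)^{-1}\|x\|_{X,n}$ together with the triangle inequality,
$$\|y - f_0(x)\|_{Y,n} + \|f_1(x)\|_{Z,n} + 2\de\|x\|_{X,n} \;\ge\; \|y\|_{Y,n} + \Bigl(2\de + \tfrac{1}{1+\de} - (1+\de)\Bigr)\|x\|_{X,n} \;\ge\; \|y\|_{Y,n},$$
the last inequality holding because $2\de(1+\de) \ge 2\de + \de^2$. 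Hence $\|g_0(y)\|_n = \|y\|_{Y,n}$, and a symmetric computation gives $\|g_1(z)\|_n = \|z\|_{Z,n}$; on the higher indices these equalities are immediate from the definition, and both $g_0$, $g_1$ are plainly linear injections. Finally $(g_0 \circ f_0 - g_1 \circ f_1)(x) = (f_0(x), -f_1(x))$, and taking $x$ itself in the infimum defining $\|\cdot\|_n$ shows $\|(f_0(x),-f_1(x))\|_n \le 2\de\|x\|_{X,n}$ for each $n < \la_X$; therefore $\|g_0 \circ f_0 - g_1 \circ f_1\|_n \le 2\de \le \varpi(d,l,\de) + \varep$, which is condition (4)(i). (When $\de = 0$ the correction vanishes, we recover the ordinary pushout, and this specialises to the NAP.)

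It remains to obtain from $W_0$ a bona fide member of the relevant class meeting condition (4)(ii). The length of $W_0$ is $\max(\la_Y,\la_Z)$, which lies in the range required by each of $\K^{\mathrm{sep}}_{=\al}$, $\K^{\mathrm{sep}}_{\le\al}$ and $\K^{\mathrm{sep}}_{<\al}$, and $W_0$ is graded whenever $X,Y,Z$ are: the sequence $(\|\cdot\|_n)_n$ is non-decreasing within the block $n < \la_X$ since each term of the defining infimand is (using that $X,Y,Z$ are graded), and across the block boundary since $\|(y,z)\|_{\la_X-1} \le \|y\|_{Y,\la_X-1} + \|z\|_{Z,\la_X-1}$, which the higher seminorms dominate by construction. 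For the separated classes $X,Y,Z$ are separated, so $W := W_0 / \bigcap_n \ker\|\cdot\|_n$ is separated of the same length as $W_0$, and since the kernel of the quotient map lies inside every $\ker\|\cdot\|_n$ the value of each $\|\cdot\|_n$ is unchanged on $W$; thus $g_0$, $g_1$ remain injective multi-isometric embeddings into $W$, condition (4)(i) persists, and $W$ lies in the appropriate separated class. For $\K_{<\omega}$, where the inputs need not be separated and this quotient could destroy injectivity of $g_0$ or $g_1$, one instead appends to $W_0$ a single auxiliary norm on $Y \oplus Z$ (chosen to dominate the existing seminorms when $\K = \G$): this produces a separated space of finite length $\max(\la_Y,\la_Z)+1$, graded if $\K = \G$, into which $g_0$, $g_1$ are still injective multi-isometric embeddings satisfying (4)(i). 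Since $(\{0\},\|\cdot\|)$ evidently belongs to each of the listed classes, this completes the argument.

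The step I expect to be most delicate is the bookkeeping of lengths and of the grading when $\la_X$, $\la_Y$ and $\la_Z$ are not all equal, together with the above split between the separated classes and $\K_{<\omega}$ at the final stage; the fact that the formulas define seminorms and the amalgamation estimate itself are routine once the correction term is in place.
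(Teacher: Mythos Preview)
Your argument is correct and follows the same pushout-with-correction strategy as the paper, but you streamline the key step. The paper first reduces to \emph{expansive} multi-$\de$-embeddings (by rescaling the seminorms on $X$ by $(1+\de)^{-1}$), proves the amalgamation with correction term $(\de'+\varep)\|x\|_{X,n}$ where $\de'=2\de+\de^2$, and then translates back to get the bound $2\de+\varep$. You bypass this detour by using the correction $2\de\|x\|_{X,n}$ from the outset and checking directly, via the inequality $2\de + (1+\de)^{-1} \ge 1+\de$ (equivalent to $\de^2\ge 0$), that the canonical inclusions are exact isometries; this yields the sharper estimate $\|g_0f_0 - g_1f_1\|_n \le 2\de$ with no $\varep$ needed. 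Two further cosmetic differences: you use sums rather than maxima for the seminorms at indices $n\ge\la_X$, and you handle separation by passing to the Hausdorff quotient (relying on $Y,Z\in\K^{\mathrm{sep}}$ to keep $g_0,g_1$ injective), whereas the paper's strictly positive correction $(\de+\varep)$ makes $Y\oplus Z$ separated outright. Both routes are valid; yours is a little more direct.
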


\begin{proof}
First we will show that $\K_{\leq \al}^{\mathrm{sep}}$ is an amalgamation class with modulus $\varpi(d,l,\de) = \de$ with respect to \emph{expansive} multi-$\delta$-isometric embeddings, i.e. multi-$\delta$-isometric embeddings $f: X \rightarrow Y$ which additionally satisfy $\|f(x)\|_{Y,n} \geq \|x\|_{X,n}$ for each $x \in X$ and $n < \la_X$. To this end, fix $\varep > 0, \delta \geq 0, d \in \N$ and $l \in \N \cup \{\omega\}$. Let $X, Y, Z \in \langle \bar{\K}\rangle$ where $\dim X = d$ and $\la_X = l$, and fix expansive multi-$\delta$-isometric embeddings $f : X \rightarrow Y$ and $g : X \rightarrow Z$. We can suppose, without loss of generality, $\lambda_Y \leq \lambda_Z$. Consider the sum $Y \oplus Z$ together with the canonical inclusion mappings $i : Y \rightarrow Y \oplus Z$ and $j : Z \rightarrow Y \oplus Z$. Equip $Y \oplus Z$ with a sequence $(\|\cdot\|_n)_{n < \lambda_Z}$ of seminorms in the following way:
\begin{enumerate}[(i)]
	\item For each $n \in [0, \la_X)$, let $$\|(y,z)\|_n := \inf\{\|u\|_{Y,n} + \|v\|_{Z,n} + (\delta+\varep) \|x\|_{X,n} : \, x \in X, u \in Y, v \in Z,  y = u + f(x), z = v - g(x)\}.$$
	\item Suppose $n \in [\la_X, \lambda_Y)$. If $\K \neq \G$, let $\|(y,z)\|_n := \max\{\|y\|_{Y,n}, \|z\|_{Z,n}\}$. Otherwise, assuming inductively that $\|(y,z)\|_{n-1}$ has been defined, let $\|(y,z)\|_n := \max\{\|(y,z)\|_{n-1}, \|y\|_{Y,n}, \|z\|_{Z,n}\}$.
	\item Suppose $n \in [\lambda_Y, \lambda_Z)$. If $\K \neq \G$, let $\|(y,z)\|_n := \|z\|_{Z,n}$. Otherwise, assuming inductively that $\|(y,z)\|_{n-1}$ has been defined, let $\|(y,z)\|_n := \max\{\|(y,z)\|_{n - 1}, \|z\|_{Z,n}\}$.
\end{enumerate}
Note that the alternative in the last two parts of the above construction is needed to guarantee that $(\|\cdot\|_n)_{n < \lambda_Z}$ is a graded sequence of seminorms whenever $X,Y$ and $Z$ are graded. Furthermore, $Y \oplus Z$ is separated whenever $Y$ and $Z$ are both separated. In the case where we are working in $\K_{<\omega}$, the above space can be turned into a separated space by extending the above sequence of seminorms by a norm.

First we check that $i$ is a multi-isometric embedding. Suppose first that $n < \la_X$. Then for each $y \in Y$, $\|i(y)\|_n = \|(y,0)\|_n \leq \|y\|_{Y,n}$ by definition of $\|\cdot\|_n$. On the other hand, given any $x \in X, u \in Y$ and $v \in Z$ such that $y = u+ f(x)$ and $0 = v - g(x)$, we have
\begin{equation*}
\begin{split}
\|u\|_{Y,n} + \|v\|_{Z,n} + (\delta+\varep) \|x\|_{X,n} &= \|u\|_{Y,n} + \|g(x)\|_{Z,n} + (\delta+\varep) \|x\|_{X,n} \\
& \geq \|u\|_{Y,n} + \|x\|_{X,n} + (\delta+\varep) \|x\|_{X,n}\\
& = \|u\|_{Y,n} + (1+\delta+\varep)\|x\|_{X,n}\\
& \geq \|u\|_{Y,n} + \|f(x)\|_{Y,n}\\
& \geq \|u + f(x)\|_{Y,n} = \|y\|_{Y,n}
\end{split}
\end{equation*}
and so $\|i(y)\|_n \geq \|y\|_{Y,n}$. Thus $i$ preserves the first $\la_X$ seminorms. If $\la_X \leq n < \lambda_Y$, then $\|i(y)\|_n = \|y\|_{Y,n}$ by definition and so $i$ is a multi-isometric embedding.

Next we check that $j$ is a multi-isometric embedding. The case where $n < \la_X$ is exactly as before. In the case where $\la_X \leq n < \lambda_Y$, we have $\|j(z)\|_n = \|z\|_{Z,n}$ by definition. Otherwise, $n \geq \lambda_Y$. In the graded case we have $$\|j(z)\|_n = \max\{\|(0,z)\|_{\lambda_Y-1}, \|z\|_{Z,n}\} = \max\{\|z\|_{Z,\lambda_Y - 1}, \|z\|_{Z,n}\} = \|z\|_{Z,n}$$ since $Z$ is graded. In the non-graded case we simply have $\|j(z)\|_n = \|z\|_{Z,n}$ by definition, and so $j$ is a multi-isometric embedding.

It remains to check $\|i \circ f - j \circ g\| \leq \delta+\varep$. To see this, note that for each $x \in X$ and $n < \la_X$ we have $$\|i(f(x)) - j(g(x))\|_n = \|(f(x), -g(x))\|_n \leq (\delta+\varep) \|x\|_{X,n}$$ by taking $u = 0$ and $v = 0$ in the definition of $\|\cdot\|_n$. Thus $i : Y \rightarrow Y \oplus Z$ and $j : Z \rightarrow Y \oplus Z$ are the desired multi-isometric embeddings.

Now suppose that $f$ and $g$ are merely multi-$\delta$-isometric embeddings. Define a new sequence of seminorms $(\|\cdot\|'_{X,n})_{n<\la_X}$ on $X$ by setting $\|x\|'_{X,n} := \frac{1}{1+\delta} \|x\|_{X,n}$. Then $f$ and $g$ become expansive multi-$\delta'$-isometric embeddings from $X$ (equipped with the new sequence of seminorms) into $Y$ and $Z$, respectively, where $\delta' = 2\delta + \delta^2$. Thus we can apply the above argument to find $W \in \K_{\leq \alpha}^{\mathrm{sep}}$ and multi-isometric embeddings $i : Y \rightarrow W, j : Z \rightarrow W$ such that $$\|i(f(x)) - j(g(x))\|_{W,n} \leq (\delta' + \varep) \|x\|'_{X,n} \text{ for all $n < \la_X$ and $x \in X$ such that $\|x\|'_{X,n} \leq 1$}.$$ Now fix $n < \la_X$ and $x \in X$ such that $\|x\|_{X,n} \leq 1$. Then $\|x\|'_{X,n} \leq \frac{1}{1+\delta} \leq 1$ and so $$\|i(f(x)) - j(g(x))\|_{W,n} \leq (\delta' + \varep) \|x\|'_{X,n} \leq \frac{\delta'}{1+\delta} + \varep = \frac{\delta + \delta(1+\delta)}{1+\delta} + \varep \leq 2\delta + \varep.$$ This shows that $\K_{\leq \al}^{\mathrm{sep}}$ is an amalgamation class with modulus $\varpi(d,l,\delta) = 2\delta$. In the case where $\K = \G$, note that the space $Y \oplus Z$ constructed above is graded whenever $X, Y$ and $Z$ are graded. This proofs for $\K_{<\alpha}^\mathrm{sep}, \K_{=\alpha}^\mathrm{sep}$ and $\K_{<\omega}$ are similar.
\end{proof}

\begin{rem}
The above proof can easily be adapted to prove amalgamation properties for classes of other ``norm-like'' structures from functional analysis. For instance, one can similarly show that the class of all finite-dimensional F-(semi)-normed spaces (as in \cite{K77}) is an amalgamation class with respect to $\delta$-isometric embeddings.
\end{rem}

Another source of examples of amalgamation classes of multi-seminormed spaces will come from the following proposition, which will make use of known amalgamation properties of classes of normed spaces. (The relevant definitions are analogous to our case; the reader is referred to \cite{FLMT} for more details, examples and discussion.) The following is standard notation: Given a seminormed space $X=(X,\nrm{\cdot})$, we denote by $ X_{\nrm{\cdot}}$ the normed space $(X/\ker\nrm{\cdot}, \nrm{\cdot})$ where $\|[x]\| := \|x\|$. Note that this norm is well-defined.

\begin{defi}\label{classes}
Let $\bar{\K}:=\{\K_n\}_{n< \al}$, $\al\le \omega$, be a collection of classes $\K_n$ of normed spaces. We define $\langle \bar{\K}\rangle_{\le \al}$, $\langle \bar{\K}\rangle_{< \al}$, $\langle \bar{\K}\rangle_{= \al}$ as the classes of separated multi-seminormed spaces $(X,(\nrm{\cdot}_n)_{n<\la_X}) \in \mc M_{\le \al}$, $\mc M_{< \al}$ and in $\mc M_{= \al}$, respectively, such that $X_{\|\cdot\|_n} \in \K_n$ for all $n < \lambda_X$.
\end{defi}  

\begin{prop}\label{amalgamation}
Let $\{\K_n\}_{n < \omega}$ be a sequence of families of finite-dimensional normed spaces.
	\begin{enumerate}[(a)]
		\item If each $\K_n$ is hereditary, then so are $\langle \bar{\K}\rangle_{< \al}$ and $\langle \bar{\K}\rangle_{\leq \al}$ for each $\alpha \leq \omega$.
		\item Suppose each $\K_n$ is an amalgamation class with modulus of stability $\varpi^{\K_n} : \N \rightarrow [0,\infty[$. Then for each $\alpha \leq \omega$ the class $\langle \bar{\K}\rangle_{< \al}$ is also an amalgamation class with modulus of stability $$\varpi(d,l,\delta) := \max_{n<l} \varpi^{\K_n}(d,\delta).$$ Furthermore, for each $n < \omega$ the classes  $\langle \bar{\K}\rangle_{\leq n}$ and $\langle \bar{\K}\rangle_{= n}$ are amalgamation classes with the same modulus of stability. 
	\end{enumerate}
\end{prop}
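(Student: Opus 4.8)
The plan is to dispatch (a) by unwinding the definitions and to prove (b) by a \emph{coordinatewise amalgamation}: amalgamate each quotient normed space $X_{\|\cdot\|_{X,n}}$ inside the corresponding class $\K_n$, and then reassemble the resulting normed amalgams into a single multi-seminormed space as their direct sum, equipped with the seminorms that read off one coordinate at a time. For (a), the key observation is that a multi-isometric embedding $f\colon X\to Y$ sends $\ker\|\cdot\|_{X,n}$ into $\ker\|\cdot\|_{Y,n}$ (as it preserves the $n$-th seminorm), hence descends, for each $n<\la_X$, to an isometric embedding of normed spaces $\bar f^n\colon X_{\|\cdot\|_{X,n}}\to Y_{\|\cdot\|_{Y,n}}$, since $\|\bar f^n([x])\|=\|f(x)\|_{Y,n}=\|x\|_{X,n}=\|[x]\|$. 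Thus if $Y\in\langle\bar\K\rangle_{<\al}$ (resp.\ $\langle\bar\K\rangle_{\le\al}$) and $X$ is a separated finite-dimensional space with $\Emb(X,Y)\ne\varnothing$, then $\la_X\le\la_Y<\al$ (resp.\ $\le\al$), and each $X_{\|\cdot\|_{X,n}}$ embeds isometrically into $Y_{\|\cdot\|_{Y,n}}\in\K_n$; hereditariness of $\K_n$ then gives $X_{\|\cdot\|_{X,n}}\in\K_n$, so $X$ lies in the class. (As usual the hereditary property is tested against separated spaces only.)

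For (b) we check the clauses of Definition~\ref{Fraisse}(4) for $\langle\bar\K\rangle_{<\al}$. The trivial object lies in $\langle\bar\K\rangle_{<\al}$ because the zero normed space lies in each $\K_n$. Now fix $X,Y,Z\in\langle\bar\K\rangle_{<\al}$ with $\dim X=d$, $\la_X=l$, and multi-$\de$-isometric embeddings $f_0\colon X\to Y$, $f_1\colon X\to Z$; assume $\la_Y\le\la_Z$, and note that all lengths here are finite, so the maximum defining $\varpi$ is a genuine finite one. Write $X_n:=X_{\|\cdot\|_{X,n}}$ with quotient map $q^X_n\colon X\to X_n$, and similarly for $Y$ and $Z$. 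For each $n<\la_X$, the maps $f_0,f_1$ descend as in (a) to $\de$-isometric embeddings $\bar f^n_0\colon X_n\to Y_n$ and $\bar f^n_1\colon X_n\to Z_n$ between normed spaces in $\K_n$, with $\dim X_n\le d$, so the amalgamation property of $\K_n$, applied with parameter $\varep$, produces $W_n\in\K_n$ and isometric embeddings $h^n_0\colon Y_n\to W_n$, $h^n_1\colon Z_n\to W_n$ with $\|h^n_0\circ\bar f^n_0-h^n_1\circ\bar f^n_1\|\le\varpi^{\K_n}(\dim X_n,\de)+\varep\le\varpi(d,l,\de)+\varep$, where the last inequality uses $n<l$, $\dim X_n\le d$, and the convention that each $\varpi^{\K_n}$ is non-decreasing in the dimension (replace it by its monotone majorant if necessary). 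For $\la_X\le n<\la_Y$, both $Y_n$ and $Z_n$ lie in $\K_n$, so the JEP of $\K_n$ --- i.e.\ amalgamation of $\K_n$ over the trivial space --- provides $W_n\in\K_n$ and isometric embeddings $h^n_0\colon Y_n\to W_n$, $h^n_1\colon Z_n\to W_n$; for $\la_Y\le n<\la_Z$ simply put $W_n:=Z_n$ and $h^n_1:=\id$.

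Next, let $W$ be the vector space $\bigoplus_{n<\la_Z}W_n$ with seminorms $\|(w_n)_n\|_m:=\|w_m\|_{W_m}$, so $\la_W=\la_Z$. Each $W_m$ is a genuine normed space, so this sequence of seminorms is separating; hence $W$ is separated --- which is clause (4)(ii) --- and $W_{\|\cdot\|_m}=W_m\in\K_m$ for every $m<\la_W$. Since $\la_W=\la_Z$ respects the length constraint (because $Z$ does), $W\in\langle\bar\K\rangle_{<\al}$. Define $g_0\colon Y\to W$ by $g_0(y):=(h^n_0(q^Y_n y))_{n<\la_Y}$, padded by zeros in the coordinates $\ge\la_Y$, and $g_1\colon Z\to W$ by $g_1(z):=(h^n_1(q^Z_n z))_{n<\la_Z}$. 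Since each $h^n_i$ is an isometric embedding, $\|g_0(y)\|_m=\|y\|_{Y,m}$ for $m<\la_Y$ and $\|g_1(z)\|_m=\|z\|_{Z,m}$ for $m<\la_Z$, and separatedness of $Y$ and $Z$ makes $g_0$ and $g_1$ injective; thus both are multi-isometric embeddings. Finally, for $n<\la_X$ the $n$-th coordinate of $(g_0\circ f_0-g_1\circ f_1)(x)$ equals $h^n_0(\bar f^n_0([x]_n))-h^n_1(\bar f^n_1([x]_n))$ (using $q^Y_n\circ f_0=\bar f^n_0\circ q^X_n$, and likewise for $f_1$), so its $W_n$-norm is at most $(\varpi(d,l,\de)+\varep)\|x\|_{X,n}$; that is, $\|g_0\circ f_0-g_1\circ f_1\|_n\le\varpi(d,l,\de)+\varep$, which is clause (4)(i).

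The variant classes $\langle\bar\K\rangle_{\le n}$ and $\langle\bar\K\rangle_{=n}$ ($n<\omega$) are handled by the same construction, noting that $\la_W=\la_Z$ still satisfies the relevant length bound and that for $\langle\bar\K\rangle_{=n}$ every coordinate is of the first, ``amalgamated'' type since $\la_X=\la_Y=\la_Z=n$. The only genuinely non-mechanical choice is the shape of the amalgam: one should resist building $W$ on $Y\oplus Z$ as in the proof of Proposition~\ref{NAP}, since that construction yields only the modulus $2\de$ rather than one governed by the $\K_n$; instead one must carry the separate normed amalgams $W_n$ as independent coordinates. I expect the remaining work to be routine bookkeeping, the only mild subtlety being the passage to the quotients $X_n$, where one uses $\dim X_n\le d$ together with monotonicity of the modulus in the dimension.
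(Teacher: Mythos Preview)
Your proposal is correct and follows essentially the same route as the paper: coordinatewise passage to the normed quotients $X_{\|\cdot\|_n}$, amalgamation inside each $\K_n$ for $n<\la_X$, use of the JEP for $\la_X\le n<\la_Y$, setting $W_n=Z_n$ for $\la_Y\le n<\la_Z$, and reassembly as the product $W=\prod_{n<\la_Z}W_n$ with coordinate seminorms. You are in fact slightly more careful than the paper on one point: the amalgamation in $\K_n$ yields the bound $\varpi^{\K_n}(\dim X_n,\de)+\varep$ with $\dim X_n\le d$, and you explicitly invoke monotonicity of the modulus in the dimension parameter to pass to $\varpi^{\K_n}(d,\de)$, whereas the paper writes $\varpi^{\K_i}(d,\de)$ directly without comment.
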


\begin{proof}
The proof of (a) is relatively straightforward, so we leave the details to the reader. We only check (b) for $\langle \bar{\K}\rangle_{< \al}$ since the proofs for the other classes are similar. Fix $\varep > 0, \delta \geq 0$ together with integers $d, l$. Let $X, Y, Z \in \langle \bar{\K}\rangle_{< \al}$ where $\dim X = d$ and $\la_X = l$, and fix multi-$\delta$-isometric embeddings $f_0 : X \rightarrow Y$ and $g_0 : X \rightarrow Z$. Suppose, without loss of generality, $\lambda_Y \leq \lambda_Z$. To construct the required element of $\langle \bar{\K}\rangle_{< \al}$, we will first define a sequence $(W_i)_{i < \lambda_Z}$ of finite-dimensional normed spaces. For each integer $i < l$, define mappings $f_0^i : X_{\nrm{\cdot}_i} \rightarrow Y_{\nrm{\cdot}_i}$ and $g_0^i : X_{\nrm{\cdot}_i} \rightarrow Z_{\nrm{\cdot}_i}$ by setting $$f_0^i([x]_i) = [f_0(x)]_i \text{ and } g_0^i([x]_i) = [g_0(x)]_i.$$ Then each $f_0^i$ and $g_0^i$ is a $\delta$-isometric embedding between elements of $\K_i$ and so, since $\K_i$ is an amalgamation class for each $i < n$, there are $W_i \in \K_i$ and isometric embeddings $f_1^i : Y_{\nrm{\cdot}_i} \rightarrow W_i, g_1^i : Z_{\nrm{\cdot}_i} \rightarrow W_i$ such that $$\|f_1^i \circ f_0^i - g_1^i \circ g_0^i \|_{X_{\nrm{\cdot}_i}, W_i} \leq \varpi^{\K_i}(d,\delta) + \varep.$$ For $i \in [l, \lambda_Y)$, let $W_i$ be obtained from an application of the JEP to the pair $Y_{\nrm{\cdot}_i}, Z_{\nrm{\cdot}_i}$; let $f_1^i$ and $g_1^i$, respectively, denote the corresponding multi-isometric embeddings. Finally, for $i \in [\lambda_Y, \lambda_Z)$, simply let $W_i = Z_{\nrm{\cdot}_i}$.

Let $W = \prod_{i< \lambda_Z} W_i$ and equip $W$ with a family of seminorms defined by setting $$\|(w_0, \dots, w_{\lambda_Z - 1})\|_i = \|w_i\|_{W_i} \text{ for $i < \lambda_Z$}.$$ Observe that $W \in \langle \bar{\K}\rangle_{< \al}$ since $W$ is separated and $W_{\|\cdot\|_i} \cong W_i$ for each $i < \lambda_Z$. Define $f_1 : Y \rightarrow W$ and $g_1 : Z \rightarrow W$ by $$f_1(y) = \left(f_1^0([y]_0), \dots, f_1^{\lambda_Y - 1}([y]_{\lambda_Y - 1}), 0, \dots, 0\right),$$ $$g_1(z) = \left(g_1^0([z]_0), \dots, g_1^{\lambda_Y-1}([z]_{\lambda_Y-1}), [z]_{\la_Y}, \dots, [z]_{\la_Z-1} \right).$$ Then it is straightforward to check that $f_1$ and $g_1$ are multi-isometric embeddings which witness the amalgamation property for $\K$ for the above parameters. Injectivity follows from the fact that $Y$ and $Z$ are separated.
\end{proof}

Combining our two sources of examples of amalgamation classes yields the following list:

\begin{ex}\label{examples}
\begin{enumerate}[(1)]
	\item The class $\M_{<\omega}$ of all finite-dimensional multi-seminormed spaces $X$ such that $\la_X < \omega$ is a Fra\"iss\'e class. In this case, observe that we do not need to restrict to separated multi-seminormed spaces since an arbitrary finite sequence of seminorms can always be extended by a norm. The same is true of the subclass $\G_{<\omega}$ of all graded $X \in \M_{<\omega}$.
	\item For each $\alpha \leq \omega$, the classes $\M_{<\alpha}^{\mathrm{sep}}, \M_{\leq \alpha}^{\mathrm{sep}}$ and $\M_{=\alpha}^{\mathrm{sep}}$ of all finite-dimensional \emph{separated} multi-seminormed spaces such that $\la_X < \alpha, \la_X \leq \alpha$ and $\la_X = \alpha$, respectively, are amalgamation classes. Note, however, that these are not Fra\"iss\'e classes since it is possible for a non-separated multi-seminormed space to embed into a separated space according to our definition of a multi-isometric embedding. The same is true for the corresponding graded versions $\G_{<\alpha}^{\mathrm{sep}}, \G_{\leq \alpha}^{\mathrm{sep}}$ and $\G_{=\alpha}^{\mathrm{sep}}$.
	\item For each infinite $I \subseteq \omega$, let $\M_{<\omega}^I$ denote the collection of all $(X, (\|\cdot\|_{X,n})_{n<\lambda_X}) \in \M_{<\omega}$ such that $\|\cdot\|_{X,n}$ is a norm for each $n \in I \cap [0, \lambda_X[$. Then $\M_{<\omega}^I$ is a Fra\"iss\'e class. (This follows from the proof of Proposition \ref{NAP}.) The amalgamation classes $\M_{< n}^I$ and $\M_{=n}^I$ can be defined similarly.
	\item Let $\M_{<\omega}^\mathcal{H}$ denote the class of all finite-dimensional Fr\'echet-Hilbert spaces with a finite sequence of seminorms, where $(X,(\|\cdot\|)_{n<\la_X})$ is a \emph{Fr\'echet-Hilbert space} if $\|\cdot\|_n$ is a Hilbertian seminorm, i.e. a seminorm induced by a semi-inner product on $X$. Equivalently, $X$ is Fr\'echet-Hilbert if each quotient space $X_{\|\cdot\|_n}$ is a Hilbert space. Then $\M_{<\omega}^\mathcal{H}$ is a Fra\"iss\'e class. (We refer the reader to \cite{MV} for more on Fr\'echet-Hilbert spaces and related concepts.)
	\item Given a sequence $(p_n)_{n<\omega} \subseteq [1,\infty[$ with $p_n\notin  \{4,6,8,\dots\}$, consider the class $\M_{<\omega}^{(p_n)}$ of all multi-seminormed spaces $(X,(\nrm{\cdot}_n)_{n<\la_X})$ such that for each $n<\la_X$ the normed space $X_{\nrm{\cdot}_n}$ can be isometrically embedded into $L_{p_n}[0,1]$. Then this is a Fra\"iss\'e class for any such sequence $(p_n)$. This follows from the fact that $\Age(L_p[0,1])$ is a Fra\"iss\'e class of finite-dimensional normed spaces for each $p \in [1,\infty[$ such that $p \not \in \{4,6,8,\dots\}$ (see \cite{FLMT}). In this case, the corresponding modulus of stability $\varpi$ associated to $\Age(L_p[0,1])$ depends on the dimension for each $p$, and hence so does the corresponding modulus of stability associated to $\M_{<\omega}^{(p_n)}$.
	\item In general, one can combine the various classes of normed spaces considered above to form new amalgamation classes of multi-seminormed spaces. For instance, given any sequence $(E_n)_{n<\alpha}$ of Fra\"iss\'e Banach spaces, one can set $\K_n = \Age(E_n)$ for each $n$ (noting that these are all Fra\"iss\'e classes of normed spaces) and form the associated class $\langle \bar{\K}\rangle_{\le \alpha}$ of multi-seminormed spaces, where $\bar{\K} = (\K_n)$.
\end{enumerate}  
\end{ex}

In all but the last two examples, the corresponding modulus of stability is always independent of both the ``dimension'' and the ``length'' parameters in the sense that modulus only depends on the third variable. In the setting of normed spaces, such classes have been studied in \cite{Lupini2018} and are called \emph{stable} Fra\"iss\'e classes. Thus, stable Fra\"iss\'e classes of finite-dimensional normed spaces give rise to stable Fra\"iss\'e classes of finite-dimensional multi-seminormed spaces. In the last two families of examples, however, it is unknown if the modulus of stability depends on the dimension. This observation inspires the following:

\begin{que}
Are there any examples of amalgamation classes of finite-dimensional seminormed spaces such that the corresponding modulus of stability depends on all three parameters?
\end{que}

\section{Fra\"iss\'e Fr\'echet spaces}

For a multi-seminormed space $X$, let $\Iso(X)$ be the group of multi-isometries $f: X \rightarrow X$ equipped with the topology generated by basic open sets of the form $$\{g \in \Iso(X) : \max_{m < n} \|(f -  g)\restriction Y\|_m < \varep \}$$ where $f \in \Iso(X)$, $Y$ is a finite-dimensional linear subspace of $X$, $\varep > 0$ and $n < \la_X$. This is the analogue of the strong operator topology in the setting of multi-seminormed spaces; in particular, a sequence $(f_k)$ in $\Iso(X)$ converges to $f$ if and only if $\|f_k(x) - f(x)\|_n$ converges to $0$ as $k \rightarrow \infty$ for each $x \in X$ and each $n \in \N$. For the sake of brevity, we will simply refer to this topology as the topology of pointwise convergence on $\Iso(X)$. We also equip $\Iso(X)$ with its \emph{left uniform structure}, i.e. the uniformity generated by entourages of the diagonal of the form $$\{(f,g) \in \Iso(X)^2 : f^{-1}\circ g \in U\}$$ where $U$ is a neighbourhood of the identity.

For a multi-seminormed space $(E, (\|\cdot\|_{E,n})_{n < \lambda_E})$, let $\Age(E)$ denote the \emph{age of $E$}, which is defined as the class of all finite-dimensional multi-seminormed spaces of the form $$(X, (\|\cdot\|_{E,n})_{n < \la_X})$$ where $X$ is a linear subspace of $E$ and $\la_X \leq \lambda_E$. (Warning: This is not the standard definition of the age, which is usually defined as the collection of all finitely-generated substructures of a given structure.) We also let $\Age_\alpha(E) := (\Age(E))_{=\alpha} = \Age(E) \cap \M_{=\alpha}$ for each $\alpha \leq \omega$. The collections $\Age_{\leq \alpha}(E)$ and $\Age_{\leq \alpha}(E)$ are defined similarly.

\begin{defi}
Let $(E, (\|\cdot\|_n)_{n < \lambda_E})$ be a multi-seminormed space and let $\K$ be a class of finite-dimensional multi-seminormed spaces.
	\begin{enumerate}[(a)]
		\item $E$ is \emph{$\K$-universal} if every $X \in \K$ embeds into $E$ via a multi-isometric embedding.
		\item $E$ is \emph{approximately $\K$-ultrahomogeneous} if for every $\varep > 0, X \in \K$ and $\gamma, \eta \in \Emb(X,E)$, there is $g \in \Iso(E)$ such that $\| g \circ \gamma - \eta \|_n  \leq \varep$ for each $n < \la_X$.
		\item $E$ is \emph{$\K$-Fra{\"i}ss\'e with modulus of stability $\varpi$} if for every $d \in \N, l \in \N \cup \{\omega\}, \varep > 0$ and $\delta \geq 0$, every $X \in \K$ and every $\gamma, \eta \in \Emb_\delta(X,E)$, there is $g \in \Iso(E)$ such that $\| g \circ \gamma - \eta \|_n  \leq \varpi(d, l, \delta) + \varep$ for each $n < l$.
	\end{enumerate}
When $\K = \Age(E)$, we will occasionally omit the reference to $\K$ in the above definitions. We also omit the reference to the modulus of stability when working with a general Fra\"iss\'e Fr\'echet space.
\end{defi}

In order to transfer properties of an amalgamation class to a relevant Fra\"iss\'e Fr\'echet space, we will need a slight modification of the given modulus. To this end, for a modulus $$\varpi : \N \times (\N \cup \{\omega\}) \times [0,\infty[ \rightarrow [0,\infty[$$ we define a new modulus $$\varpi^*(d,l,\delta) := \inf_{\delta' > \delta} \varpi(d,l,\delta').$$ Note that $\varpi^*$ is indeed a modulus which furthermore satisfies $\varpi(d,l,\delta) \leq \varpi^*(d,l,\delta)$ for any $d \in \N$ and $l \in \N \cup \{\omega\}$.

Our main goal in this section is to show that when $\K$ is a Fra\"iss\'e class with modulus $\varpi$, then there is a unique (up to a multi-isometry) separable, $\K$-universal and $\K$-Fra\"iss\'e Fr\'echet space with modulus $\varpi^*$ whose finite-dimensional subspaces are precisely those from $\K$. Note that any $\K$-Fra\"iss\'e space is automatically approximately $\K$-ultrahomogeneous. Furthermore, if $E$ is Fra\"iss\'e, then $\Age_{<\omega}(E)$ is a Fra\"iss\'e class; the amalgamation property follows from the Fra\"iss\'e property together with the fact that any finite-dimensional subspace $X \in \Age_{<\omega}(E)$ is eventually separated by the sequence of seminorms from $E$.

Below we will need to make use of an inductive limit construction for spaces with a finite sequence of seminorms. Suppose $(X_n, I_n)_{n < \omega}$ is a sequence such that:
	\begin{enumerate}[(1)]
		\item $X_n \in \M_{<\omega}$ for each $n$.
		\item $(\lambda_{X_n})_{n < \omega}$ is non-decreasing and converges to a fixed $\lambda \leq \omega$.
		\item $I_n \in \Emb(X_n, X_{n+1})$ for each $n$.
	\end{enumerate}
We define the \emph{inductive limit} $\lim_n (X_n, I_n)$ as follows: First, for each $m \leq n$, define $I_{m,n} \in \Emb(X_m, X_n)$ by setting $I_{m,m} = \id_{X_m}$ and $I_{m,n+1} = I_n \circ I_{m,n}$. Then let $V$ be the linear subspace of the product space $\prod_n X_n$ defined by declaring $(x_n)_{n < \omega} \in V$ if and only if there is some $m$ such that $x_n = I_{m,n}(x_m)$ for all $n \geq m$. For each $k < \lambda$, let $N_k$ be the linear subspace of $V$ consisting of all $(x_n)_n$ such that there is $m$ such that $k < \lambda_{X_m}$ and $\|x_n\|_k = 0$ for all $n \geq m$. Let $N = \bigcap_{k < \omega} N_k$ and let $V_0 = V / N$.

Define a sequence of seminorms $(\|\cdot\|_k)_{k < \lambda}$ on $V_0$ by $$\|(x_n)_n + N\|_k = \|x_m\|_{X_m, k}$$ where $m$ is the least integer such that $k < \lambda_{X_m}$ and $x_n = I_{m,n}(x_m)$ for all $n \geq m$. Note that these seminorms are well-defined and they form an increasing sequence whenever each $X_n$ is graded since each $I_n$ is a multi-isometric embedding.

To take a completion, we proceed as in the case of normed spaces. To this end, define an equivalence relation $\sim$ on the space of all Cauchy sequences in $V_0$ by declaring $$(\sigma_n) \sim (\tau_n) \iff (\forall k < \lambda) \, \lim_n \|\sigma_n - \tau_n\|_k = 0,$$ where a sequence $(\sigma_n) \subseteq V_0$ is Cauchy if it is Cauchy with respect to each seminorm $\|\cdot\|_k$ on $V_0$. Let $\lim_n (X_n, I_n)$ denote the resulting quotient space equipped with the sequence of seminorms of length $\lambda$ given by $$\vertiii{[(\sigma_n)]_\sim}_k = \lim_n \|\sigma_n\|_k \text{ for each $k < \lambda$.}$$ Observe that $(\lim_n (X_n, I_n), (\vertiii{\cdot}_k)_{k<\lambda})$ is a complete, separated multi-seminormed space, and so it is a Fr\'echet space. Given $x \in V_0$, let $\mathcal{C}(x)$ denote the equivalence class (in $\lim_n (X_n, I_n)$) of the Cauchy sequence with constant value $x$. Then for each $m$ the canonical mapping $$I_m^{(\infty)} : X_m \rightarrow \lim_n (X_n, I_n) : x \mapsto \mathcal{C}\big((\overset{(m)}{\overbrace{0,\dots, 0}}, x, I_{m,m+1}(x), I_{m,m+2}(x), \dots)\big)$$ is a seminorm-preserving linear mapping. When $X_m$ is separated, it is routine to check that $I_m^{(\infty)}$ is injective and hence a multi-isometric embedding. Note that union $\bigcup_n I_n^{(\infty)}(X_n)$ is dense in $\lim_n (X_n, I_n)$, in the sense that for every $[\sigma]_\sim \in \lim_n (X_n,I_n)$, there is a sequence $(\sigma_m)$ belonging to the union which converges to $[\sigma]_\sim$ with respect to each $\vertiii{\cdot}_k$. In particular, such a sequence converges to $[\sigma]_\sim$ with respect to the pseudometric induced by $\max_{l < k} \vertiii{\cdot}_l$ for any given $k < \lambda$. From now on, whenever we are working with an inductive limit of separated spaces $X_n$, we will identify $X_n$ with its image $I_n^{(\infty)}(X_n)$ in $\lim_n(X_n, I_n)$. In this way, each $I_n : X_n \rightarrow X_{n+1}$ becomes the corresponding inclusion mapping, so that $(X_n)$ is an increasing sequence of finite-dimensional subspaces of $\lim_n(X_n,I_n)$ and $X_n$ is equipped with the first $\lambda_{X_n}$ seminorms induced by the inductive limit. Furthermore, $\bigcup_n X_n$ is dense in the inductive limit.

We will need the following consequence of the amalgamation property, the proof of which can be found in \cite[Lemma 2.32]{FLMT}.

\begin{lem}\label{AP}
Suppose $\K$ is an amalgamation class with modulus $\varpi$. Fix $\varep > 0$, $\Delta \subseteq \R^+$ finite and $\A \cup \{Y\} \subseteq \K$ finite. Then there is some $Z \in \K$ and $I \in \Emb(Y,Z)$ such that for every $X \in \A$, every $\delta \in \Delta$ and every $\gamma, \eta \in \Emb_\delta	(X,Y)$, there is $J \in \Emb(Y,Z)$ such that $$\max_{l < \la_X} \| I \circ \gamma - J \circ \eta \|_l \leq  \varpi(\dim X, \la_X, \delta) + \varep.$$
\end{lem}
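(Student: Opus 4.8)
The plan is to iterate the amalgamation property finitely many times to absorb all the data $(X,\delta,\gamma,\eta)$ with $X\in\A$, $\delta\in\Delta$ into a single extension of $Y$. First I would observe that the relevant index set is finite: although for a fixed triple $(X,\delta)$ there are in general infinitely many pairs $\gamma,\eta\in\Emb_\delta(X,Y)$, this is not an obstacle, since a single application of amalgamation handles a whole such family at once. Indeed, fix $X\in\A$ and $\delta\in\Delta$, and apply the near amalgamation property (condition (4) of Definition \ref{Fraisse}) with modulus $\varpi$ to the cospan $\id_Y:X\hookrightarrow Y$ against itself — more precisely, to the two multi-$\delta$-isometric embeddings obtained by composing the inclusion $X\subseteq Y$ with $\id_Y$ on each side — producing $W\in\K$ and $g_0,g_1\in\Emb(Y,W)$ with $\max_{l<\la_X}\|g_0-g_1\|_l\le\varpi(\dim X,\la_X,\delta)+\varep'$ for a suitably small auxiliary $\varep'$. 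Setting $I:=g_0$, for any pair $\gamma,\eta\in\Emb_\delta(X,Y)$ one takes $J:=g_1$ and uses the triangle inequality together with $\|g_0\circ\gamma-g_1\circ\gamma\|_l\le(1+\delta)\|g_0-g_1\|_l$ and $\|g_1\circ\gamma-g_1\circ\eta\|_l$ — but this last term need not be small, so a naive single step does not quite work.

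So the correct approach is the standard one: enumerate the finitely many triples $(X_1,\delta_1),\dots,(X_N,\delta_N)$ arising from $\A\times\Delta$, and build a finite chain $Y=Z_0\xrightarrow{I_0}Z_1\xrightarrow{I_1}\cdots\xrightarrow{I_{N-1}}Z_N=:Z$ of members of $\K$, where at stage $k$ one applies amalgamation to deal with the triple $(X_k,\delta_k)$. The point is that amalgamation, applied to the two multi-$\delta_k$-isometric embeddings $\mathrm{incl}$ and $\mathrm{incl}$ of $X_k$ into $Z_{k-1}$ composed with two copies of $\id_{Z_{k-1}}$, is not what we want; rather, we must encode the ambiguity of the pair $(\gamma,\eta)$. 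The clean trick here is to amalgamate $Z_{k-1}$ with itself over $X_k$ along the two embeddings $\gamma,\eta$ themselves — but since $\gamma,\eta$ range over a set, we instead use a more uniform device, namely we first note that since $\K\subseteq\M_{<\omega}$ and everything is finite-dimensional, the set $\Emb_{\delta_k}(X_k,Z_{k-1})$ is compact in the natural (finite-dimensional operator-norm) topology; a standard compactness argument reduces the task to finitely many pairs $(\gamma,\eta)$ up to $\varep$, and then $N$ is enlarged to absorb these finitely many pairs as well. For each such pair, one application of amalgamation along $\gamma,\eta$ gives $W$, $g_0,g_1\in\Emb(Z_{k-1},W)$ with $\max_{l<\la_{X_k}}\|g_0\circ\gamma-g_1\circ\eta\|_l\le\varpi(\dim X_k,\la_{X_k},\delta_k)+\varep/2$; we set $Z_k:=W$, $I_{k-1}:=g_0$, and record $J:=g_1$ (pre-composed with the relevant inclusions $Z_{k-1}\hookleftarrow Y$) for later use.

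The bookkeeping is then as follows. Set $I:=I_{N-1}\circ\cdots\circ I_0\in\Emb(Y,Z)$. Given any $X\in\A$, $\delta\in\Delta$ and $\gamma,\eta\in\Emb_\delta(X,Y)$: by the compactness reduction there is a pair $(\gamma',\eta')$ among the finitely many chosen at the stage $k$ corresponding to $(X,\delta)$ such that $\gamma,\eta$ are $\varep$-close (after the $\varep/4$-style splitting of the total error budget) to $\gamma',\eta'$ composed with the appropriate inclusion; composing the witness $g_1$ from that stage with the tail maps $I_{N-1}\circ\cdots\circ I_k$ produces the desired $J\in\Emb(Y,Z)$, and three applications of the triangle inequality (absorbing the perturbation from passing between $(\gamma,\eta)$ and $(\gamma',\eta')$, which costs at most $(1+\delta)$ times something controlled by the total budget) give $\max_{l<\la_X}\|I\circ\gamma-J\circ\eta\|_l\le\varpi(\dim X,\la_X,\delta)+\varep$. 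One should distribute the $\varep$ carefully across the $N$ stages and the compactness approximations — e.g. allot $\varep/(3N)$ to each — so that the final error is at most $\varpi(\dim X,\la_X,\delta)+\varep$. The main obstacle, and the only genuinely delicate point, is the handling of the fact that $\Emb_\delta(X,Y)$ is a continuum rather than a finite set: one must verify that $\gamma\mapsto\|I\circ\gamma-J\circ\eta\|_l$ is Lipschitz (in the operator norm on the finite-dimensional space $X$) uniformly in the finitely many stages, so that the compactness approximation indeed transfers the estimate; this is routine since all maps in sight are linear between finite-dimensional spaces with uniformly bounded norms, but it is where care is needed. As noted in the excerpt, the full details are in \cite[Lemma 2.32]{FLMT}.
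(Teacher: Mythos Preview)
Your approach is correct and is precisely the standard argument the paper is citing from \cite[Lemma 2.32]{FLMT}: pick a finite net in each $\Emb_\delta(X,Y)\times\Emb_\delta(X,Y)$ by total boundedness, then iterate amalgamation along a finite chain $Y=Z_0\to Z_1\to\cdots\to Z_N=Z$, one step per net pair, taking $I$ to be the composite of the ``first legs'' and using the ``second leg'' at the relevant stage (post-composed with the tail) as $J$. Two small cleanups: the compactness should be invoked for $\Emb_\delta(X,Y)$ (not $\Emb_\delta(X,Z_{k-1})$, which would be circular since $Z_{k-1}$ depends on the net), and in the seminormed setting the correct statement is that $\Emb_\delta(X,Y)$ is totally bounded in the pseudometric $\max_{l<\la_X}\|\cdot\|_l$ --- this holds because $\gamma\mapsto(\bar\gamma_l)_{l<\la_X}$ is an isometry into the finite product of the compact balls $\{\|f\|\le 1+\delta\}\subseteq B(X_{\|\cdot\|_l},Y_{\|\cdot\|_l})$, not because the full space of linear maps $X\to Y$ is compact (it need not be, when some seminorm is degenerate).
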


Before proceeding, we require one more piece of notation. Given two classes $\K$ and $\K'$ of finite-dimensional multi-seminormed spaces, write $\K \preceq \K'$ when for every $X \in \K$ there are $Y \in \K'$ and a multi-isometry of $X$ onto $Y$, and write $\K \equiv \K'$ when $\K \preceq \K' \preceq \K$.

\begin{thm}\label{fraisse}
If $\K$ is an amalgamation class with modulus $\varpi$, then there is a separable $\K$-Fra\"iss\'e Fr\'echet space $E$ with modulus $\varpi^*$ such that $\K \preceq \Age_{<\omega}(E)$. Furthermore, if $\K$ is a Fra\"iss\'e class then $\Age_{<\omega}(E) \equiv \K$.
\end{thm}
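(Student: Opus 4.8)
The plan is to build the Fra\"iss\'e Fr\'echet space $E$ by a standard back-and-forth inductive limit construction, using Lemma \ref{AP} as the engine for the amalgamation steps, and then verify that the resulting limit has all the required properties. Since $\K \subseteq \M_{<\omega}$ is separable (by the first Proposition of the excerpt, the $d_{\mathrm{BM}}$-metric is $\sigma$-compact), we may fix a countable $d_{\mathrm{BM}}$-dense sequence $(Y_j)_{j<\omega}$ in $\K$, and also fix a countable dense set $\Delta_0 \subseteq \R^+$ of ``error parameters'' together with an enumeration that revisits every triple (index, parameter, target dimension/length) infinitely often. Using condition (4)(ii) in the definition of an amalgamation class, we may moreover assume each $Y_j$ is separated (replacing it by a separated superspace if necessary; this does not change density, by heredity and closure under $d_{\mathrm{BM}}$). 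The first task is to set up a bookkeeping scheme so that the construction eventually handles every finite configuration.

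\medskip

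First I would construct, by recursion on $n$, a sequence $(X_n, I_n)_{n<\omega}$ with $X_n \in \K$ separated, $\lambda_{X_n}$ non-decreasing and cofinal in some $\lambda \le \omega$, and $I_n \in \Emb(X_n, X_{n+1})$, satisfying two ``richness'' requirements dovetailed along the enumeration: (i) \emph{universality steps}, where at stage $n$ we use the JEP (available since $\K$ is an amalgamation class) to embed $Y_{j(n)}$ multi-isometrically into $X_{n+1}$, arranging that every member of $(Y_j)$ — hence, up to $d_{\mathrm{BM}}$-approximation, every member of $\K$ — embeds into the limit; and (ii) \emph{homogeneity steps}, where at stage $n$ we apply Lemma \ref{AP} with $Y = X_n$, with $\A$ a finite list of subspaces of $X_n$ (running through all ``previously promised'' finite-dimensional subspaces along the enumeration), with $\Delta$ a finite subset of $\Delta_0$, and with $\varepsilon$ a summable error $\varepsilon_n \to 0$, to obtain $Z = X_{n+1}$ and the embedding $I_n = I$ such that any two $\delta$-embeddings of an $X \in \A$ into $X_n$ become $(\varpi(\dim X, \lambda_X, \delta) + \varepsilon_n)$-conjugate inside $X_{n+1}$. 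Let $E := \lim_n(X_n, I_n)$ be the inductive-limit Fr\'echet space constructed earlier in the excerpt; since each $X_n$ is separated, each $I_n^{(\infty)}$ is a multi-isometric embedding, and I identify $X_n$ with its image, so $E$ is separable (the union $\bigcup_n X_n$ is dense) with $\lambda_E = \lambda$.

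\medskip

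Next I would verify the three conclusions. \emph{$\K \preceq \Age_{<\omega}(E)$:} given $X \in \K$ and $\varepsilon > 0$, pick $Y_j$ with $d_{\mathrm{BM}}(X, Y_j)$ small; by the universality steps $Y_j$ embeds multi-isometrically into some $X_n \subseteq E$, so $X$ admits a multi-$\varepsilon'$-isometric embedding into $E$, and then a perturbation/compactness argument as in the normed-space case (using $\sigma$-compactness of the relevant $d_{\mathrm{BM}}$-ball and that $X_n \subseteq E$ is finite-dimensional, together with approximate ultrahomogeneity proved below) upgrades this to an exact multi-isometric embedding; alternatively one engineers this directly into the bookkeeping. \emph{$E$ is $\K$-Fra\"iss\'e with modulus $\varpi^*$:} fix $d, l, \varepsilon, \delta$, $X \in \K$ with $\dim X = d$, $\lambda_X = l$, and $\gamma, \eta \in \Emb_\delta(X, E)$. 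Approximating $\gamma, \eta$ by embeddings into some finite-dimensional $X_n$ and choosing $\delta' > \delta$ in $\Delta_0$ slightly larger than the resulting perturbed distortion, the homogeneity step that fired for this configuration yields $J \in \Emb(X_n, X_{n+1})$ with $\|I_{n,n+1}\circ\gamma' - J\circ\eta'\|_l \le \varpi(d,l,\delta') + \varepsilon_n$ on $X$; iterating the remaining homogeneity steps along a cofinal set of stages and composing the resulting partial isometries produces, in the limit (this is where summability of $\sum_k \varepsilon_k$ and completeness of $E$ are used), an element $g \in \Iso(E)$ with $\|g\circ\gamma - \eta\|_n \le \varpi(d,l,\delta') + \varepsilon$ for all $n < l$; letting $\delta' \downarrow \delta$ gives the bound $\varpi^*(d,l,\delta) + \varepsilon$, as required. \emph{$\Age_{<\omega}(E) \equiv \K$ when $\K$ is Fra\"iss\'e:} we already have $\K \preceq \Age_{<\omega}(E)$; conversely, any finite-dimensional $W \subseteq E$ lies within distance $\varepsilon$ (in the sense of a multi-$\varepsilon$-isometry) of a finite-dimensional subspace of some $X_n \in \K$, hence of a member of $\K$; since $\K$ is hereditary and $d_{\mathrm{BM}}$-closed and $W$ is finite-dimensional, letting $\varepsilon \to 0$ and using $\sigma$-compactness of the $d_{\mathrm{BM}}$-ball containing these approximants shows $W$ is multi-isometric to a member of $\K$, so $\Age_{<\omega}(E) \preceq \K$.

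\medskip

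\textbf{The main obstacle} I expect is the limit-of-partial-isometries argument establishing $g \in \Iso(E)$: one must show that the infinite composition of approximate conjugating maps, each defined on a larger finite-dimensional piece, converges in the topology of pointwise convergence on $\Iso(E)$ to an honest surjective multi-isometry, and that the accumulated errors — controlled by $\sum_n \varepsilon_n$ and by the fact that $\varpi$ is a modulus (continuous at $0$) — stay within $\varpi^*(d,l,\delta)+\varepsilon$ uniformly over the first $l$ seminorms. This requires a careful Cauchy-sequence estimate in each seminorm $\vertiii{\cdot}_k$ simultaneously, using density of $\bigcup_n X_n$ in $E$ and completeness; surjectivity comes from the standard back-and-forth feature that every $X_n$ eventually appears in the ``target'' role of a homogeneity step. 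A secondary technical point is ensuring that condition (4)(ii) is genuinely exploited so that all intermediate spaces $X_n$ can be taken separated, which is what makes the canonical maps $I_n^{(\infty)}$ into embeddings rather than mere seminorm-preserving maps; this is routine but must not be skipped. Uniqueness up to multi-isometry, asserted later in the section, follows from a standard back-and-forth argument between two such spaces and is not needed for the present statement.
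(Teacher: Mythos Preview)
Your proposal is correct and follows essentially the same approach as the paper: build $E$ as an inductive limit of separated spaces $X_n \in \K$ obtained via Lemma~\ref{AP} (with JEP steps for universality interleaved), then verify the Fra\"iss\'e property by constructing a global multi-isometry as a pointwise limit of a back-and-forth sequence of partial embeddings, and finally upgrade approximate embeddings of members of $\K$ into $E$ to exact ones using the just-established Fra\"iss\'e property. The paper makes the back-and-forth explicit via alternating sequences $J_s, L_s$ whose compositions approximate the identity, and proves $\K \preceq \Age_{<\omega}(E)$ \emph{after} the Fra\"iss\'e property (your ``approximate ultrahomogeneity proved below'' route), but these are exactly the ingredients you have identified.
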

\begin{proof}
Let $\{Z_n\}_{n<\omega}$ be a countable $d_{\mathrm{BM}}$-dense subset of $\K$. Fix an enumeration $(\delta_n)$ of $\Q \cap [0,1]$ such that $\delta_0 = 0$. Using Lemma \ref{AP}, we find a sequence $(X_n, I_n)_{n<\omega}$ of separated spaces $X_n \in \K$ and multi-isometric embeddings $I_n \in \Emb(X_n, X_{n+1})$ with the following properties:
	\begin{enumerate}[(a)]
		\item Let $\lambda_\K :=  \sup_{X \in \K} \la_X$.
		\begin{enumerate}
			\item[(i)] If $\lambda_\K = \omega$, then $\lambda_{X_n} \geq n$ for all $n$.
			\item[(ii)] If $\lambda_\K < \omega$, then $\lambda_{X_n} = \lambda_\K$ for all $n$.
		\end{enumerate}
		\item For every $k \leq n$, every $X \in \{Z_j\}_{j\leq n} \cup \{X_j\}_{j\leq n}$ and every $\gamma, \eta \in \Emb_{\delta_k}(X, X_n)$, there is $J \in \Emb(X_n, X_{n+1})$ such that $$\max_{l < \la_X} \| I_n  \circ \gamma - J \circ \eta \|_l \leq \varpi(\dim X, \la_X, 2^{-n}) + 2^{-n}.$$
		\item $\Emb(Z_m, X_n) \neq \varnothing$ for each pair $m<n$.
	\end{enumerate}
Note that (c) can be arranged by applying the JEP of $\K$. Let $E = \lim_n (X_n, I_n)$. For simplicity, we assume from now on that $(X_n)$ is an increasing sequence of subspaces of $E$, $I_n$ is the inclusion mapping $X_n \subseteq X_{n+1}$, and $I_n^{(\infty)}$ is the inclusion mapping $X_n \subseteq E$. In particular, we work exclusively with the sequence of seminorms $(\|\cdot\|_k)_{k<\lambda_\K}$ associated to $E$.

The proof of the theorem will be complete once we prove the following two claims.

\begin{claim}
$E$ is $\K$-Fra\"iss\'e.
\end{claim}
\begin{proof}[Proof of Claim]
Fix $X \in \K$ together with $\varep > 0, \delta' > \delta \geq 0$ and $\gamma, \eta \in \Emb_\delta(X,E)$. Choose a large enough $n$ such that $2^{-(n-1)} < \varep$ and such that there are $j, k \leq n$ and a sufficiently small $\delta'' \geq 0$ with the following properties:
	\begin{enumerate}[(i)]
		\item $\delta < \delta_j < \delta'$.
		\item $(\varpi(\dim X, \lambda_X, \delta_j) + \varep)\delta'' < \varep/3$.
		\item There are $\theta \in \Emb_{\delta''}(X,Z_k)$ and $\widetilde{\gamma}, \widetilde{\eta} \in \Emb_{\delta_j}(Z_k, X_n)$ such that $$\max_{l<\lambda_X} \|\widetilde{\gamma} \circ \theta - \gamma\|_l \leq \varep/3 \, \text{ and } \, \max_{l<\lambda_X} \|\widetilde{\eta} \circ \theta - \eta\|_l \leq \varep/3.$$
	\end{enumerate}
Using the definition of the sequence $(X_n, I_n)$, recursively construct sequences of embeddings $$J_s \in \Emb(X_{n+2s}, X_{n+2s+1}) \text{ and } L_s \in \Emb(X_{n+2s+1}, X_{n+2s+2})$$ such that:
	\begin{enumerate}
		\item[(iv)] $\|J_0 \circ \widetilde{\eta} - \widetilde{\gamma} \|_l \leq  \varpi(\dim X, \la_X,\delta_j) + 2^{-n}$ for each $l < \la_X$.
		\item[(v)] $\|J_{s+1} \circ L_s - \id_{X_{n+2s+1}}\|_l \leq 2^{-(n+2s)}$ for each $s\geq 0$ and each $l < \lambda_{X_{n+2s+1}}$.
		\item[(vi)] $\|L_s \circ J_s - \id_{X_{n+2s}}\|_l \leq 2^{-(n+2s+1)}$ for each $s\geq 0$ and each $l < \lambda_{X_{n+2s}}$.
	\end{enumerate}
Letting $\varep_0 = \varpi(\dim X, \la_X,\delta_j) + 2^{-n}$ and $\varep_m = 2^{-(n+m)}$ for $m\geq 1$, the above situation can be summarized by the following approximately commutative diagram:

\begin{center}
\begin{tikzcd}[column sep=large]
                      &                                                                      & X_n \arrow[r, "I_n", hook] \arrow[dd, phantom, "\scalebox{0.8}{\cca{\varep_0}}" description] & X_{n+1} \arrow[r, "I_{n+1}", hook] \arrow[rdd, "L_0"'] \arrow[dd, phantom,"\scalebox{0.8}{\cca{\varep_{1}}}" description] & X_{n+2} \arrow[r, "I_{n+2}", hook] \arrow[dd, phantom,"\scalebox{0.8}{\cca{\varep_{2}}}" description] & X_{n+3} \arrow[r, "I_{n+3}", hook] \arrow[rdd, "L_1"'] \arrow[dd, phantom,"\scalebox{0.8}{\cca{\varep_{3}}}" description] & X_{n+4} \\
X \arrow[r, "\theta"] & Z_k \arrow[ru, "\widetilde{\gamma}"] \arrow[rd, "\widetilde{\eta}"'] &                                                                                &                                                                                                            &                                                                                        &                                                                                            & \dots   \\
                      &                                                                      & X_n \arrow[ruu, "J_0"'] \arrow[r, "I_n"', hook]                                      & X_{n+1} \arrow[r, "I_{n+1}"', hook]                                                                              & X_{n+2} \arrow[r, "I_{n+2}"', hook] \arrow[ruu, "J_1"']                                      & X_{n+3} \arrow[r, "I_{n+3}"', hook]                                                                              & X_{n+4}
\end{tikzcd}
\end{center}

Now, according to (v) and (vi), for each fixed $x \in X_{n+2s}$ and $l < \lambda_{X_{n+2s}}$ we have $$\|J_{s+1} L_{s} J_s(x) - J_s(x)\|_l \leq 2^{-(n+2s)}\|J_s(x)\|_l = 2^{-(n+2s)}\|x\|_l.$$ On the other hand, we also have $$\|J_{s+1} L_{s} J_s(x) - J_{s+1}(x)\|_l = \|J_{s+1}(L_{s}J_s(x) - x)\|_l = \|L_{s}J_s(x) - x\|_l \leq 2^{-(n+2s+1)}\|x\|_l$$ and so an application of the triangle inequality then yields $$\|J_{s+1}(x) - J_s(x)\|_l \leq (2^{-(n+2s)} + 2^{-(n+2s+1)})\|x\|_l = 3\cdot 2^{-(n+2s+1)}\|x\|_l.$$ Then for every $s, t < \omega$ and every we have
\begin{equation}
\begin{split}
\|J_{s+t}(x) - J_s(x)\|_l &\leq \sum_{0 \leq i \leq t-1} \|J_{s+i+1}(x) - J_{s+i}(x)\|_l \leq \sum_{0 \leq i \leq t-1} 3\cdot 2^{-(n+2(s+i)+1)} \|x\|_l \\ 
& = \frac{3}{2^{n+2s+1}} \sum_{0 \leq i \leq t-1} 2^{-(2i)} \|x\|_l \leq \frac{3}{2^{n+2s+1}} \cdot 2 \|x\|_l \leq \frac{3}{2^{n+2s}}\|x\|_l.
\end{split}
\end{equation}
In particular, this implies that for each $x \in \bigcup_{n < \omega} X_n$ the sequence $(J_s(x))_s$ is Cauchy with respect to each seminorm $\|\cdot\|_{E,l}$ for $l < \lambda_E$; indeed, given $l < \lambda_E$, simply choose a large enough $N$ such that $l < \lambda_{X_n}$ and $x \in X_n$ for all $n > N$, noting that inequality (1) holds for all such $n$. Thus $(J_s)_{s<\omega}$ is pointwise Cauchy in $E$, so by completeness we can define a linear mapping $J : \bigcup_{n < \omega} X_n \rightarrow E$ by setting $J(x) = \lim_{s \geq k} J_s(x)$ where $k$ is least such that $x \in X_k$; we then extend $J$ to a mapping $J : E \rightarrow E$. Note that $J$ is a seminorm-preserving linear mapping. To see that $J$ is a bijection, we define as before a seminorm-preserving linear mapping $L : \bigcup_{n < \omega} X_n \rightarrow E$ by $L(y) = \lim_{s \geq k} L_s(y)$ where $k$ is least such that $y \in X_k$, and we extend it to a mapping $L : E \rightarrow E$. Then, since $E$ is separated, (v) and (vi) imply $L \circ J = J \circ L = \id_E$. Thus $J$ and $L$ are both multi-isometries. Finally, note that for each $l < \la_X$ we have $$\| J_s \circ \widetilde{\eta} - \widetilde{\gamma}\|_l \leq \varpi(\dim X, \lambda_X, \delta_j) + \sum_{0 \leq i \leq 2s} 2^{-(n+i)} \leq \varpi(\dim X, \lambda_X, \delta_j) + 2^{-(n-1)}.$$ Taking the limit as $s \rightarrow \infty$ we see that $\|J \circ \widetilde{\eta} - \widetilde{\gamma}\|_l \leq \varpi(\dim X, \lambda_X, \delta_j) + 2^{-(n-1)}$ and so
\begin{equation*}
\begin{split}
\|J \circ \eta - \gamma \|_l &\leq \|J \circ \eta - J \circ \widetilde{\eta} \circ \theta\|_l + \|J \circ \widetilde{\eta} \circ \theta - \widetilde{\gamma} \circ \theta\|_l + \|\widetilde{\gamma} \circ \theta - \gamma\|_l \\
& \leq \frac{\varep}{3} + (\varpi(\dim X, \lambda_X, \delta_j) + 2^{-(n-1)})\|\theta\|_l + \frac{\varep}{3} \\
& \leq \varpi(\dim X, \la_X, \delta_j) + \varep \leq \varpi(\dim X, \la_X, \delta') + \varep.
\end{split}
\end{equation*}
Thus $E$ is $\K$-Fra\"iss\'e with modulus $\varpi^*$.
\end{proof}

\begin{claim}
$\K \preceq \Age_{<\omega}(E)$. Furthermore, $\Age_{<\omega}(E) \preceq \K$ whenever $\K$ is Fra\"isse.
\end{claim}
\begin{proof}[Proof of Claim]
Fix $X \in \K$. Since $\K$ is an amalgamation class, $X$ isometrically embeds into a separated space $X'$. Thus we can assume without loss of generality that $X$ itself is separated. Now, find a decreasing positive sequence $(\delta_n)_n$ such that $\varpi(\dim X,\la_X,\delta_n) \leq 2^{-n}$ for each $n$. Using the definition of the sequence $\{Z_n\}$ together with property (c), for each $n$ we find some $\gamma_n \in \Emb_{\delta_n}(X,E)$. Since $E$ is $\K$-Fra\"iss\'e, for each $n$ we can choose $g_n \in \Iso(E)$ such that $\|g_n \circ \gamma_{n+1} - \gamma_n\|_l \leq 2^{-(n-1)}$ (where we take $\varep = 2^{-n}$) for each $l < \la_X$. Define a sequence $(\eta_n)_n$ of multi-$\delta_n$-isometric embeddings of $X$ into $E$ by setting $\eta_0 = \gamma_0$ and $$\eta_{n+1} = g_0 \circ \dots \circ g_n \circ \gamma_{n+1} \text{ for each $n > 0$}.$$ Then $(\eta_n)_n$ is pointwise Cauchy, since $$\|\eta_{n+k} - \eta_n\|_l \leq \sum_{j=n}^{n+k-1} 2^{-(j-1)} \leq 2^{-(n-2)}$$ and so the limit $\eta : X \rightarrow E$ is a multi-isometric embedding; injectivity follows from the assumption that $X$ is separated.

Finally, note that the construction of $E$ implies $\Age_{<\omega}(E) \preceq \overline{\K}^{\mathrm{BM}}$, where the latter collection is the $d_{\mathrm{BM}}$-closure of $\K$ in $\M_{<\omega}$. Thus, if $\K$ is a Fra\"iss\'e class, i.e. a $d_{\mathrm{BM}}$-closed amalgamation class with the hereditary property, then the latter collection is precisely $\K$. Thus $\Age_{<\omega}(E) \preceq \K$ and so $\Age_{<\omega}(E) \equiv \K$ in this case.
\end{proof}
This completes the proof of the two claims and hence of the existence of a separable, $\K$-universal, $\K$-Fra\"iss\'e Fr\'echet space.
\end{proof}

We will henceforth refer to the space $E$ constructed in Theorem \ref{fraisse} as the \emph{Fra\"iss\'e limit} of the class $\K$ and we will denote it by $\Flim(\K)$. Next we show that such a space is unique whenever $\K$ is a Fra\"iss\'e class. More generally, we have:

\begin{prop}\label{unique}
Suppose $E$ and $F$ are separable approximately ultrahomogeneous Fr\'echet spaces such that $\lambda_E = \lambda_F$ and $\Age_{<\omega}(E) \equiv \Age_{<\omega}(F)$. Then $E$ and $F$ are multi-isometric.
\end{prop}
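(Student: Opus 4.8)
The plan is to use a back-and-forth argument of the standard Fra\"iss\'e type, adapted to the metric (approximate) setting, building a multi-isometry between $E$ and $F$ as a pointwise limit of partial multi-isometric embeddings. First I would fix countable dense sequences $(e_n)_{n<\omega}$ in $E$ and $(f_n)_{n<\omega}$ in $F$, and note that since $\Age_{<\omega}(E) \equiv \Age_{<\omega}(F)$ and $\lambda_E = \lambda_F$, every finite-dimensional subspace of $E$ is, up to multi-isometry, a subspace of $F$ and vice versa; approximate ultrahomogeneity will then let us correct embeddings by ambient isometries. The goal is to recursively construct finite-dimensional subspaces $X_0 \subseteq X_1 \subseteq \cdots$ of $E$ and $Y_0 \subseteq Y_1 \subseteq \cdots$ of $F$, together with multi-isometric embeddings $\phi_n : X_n \to F$ and $\psi_n : Y_n \to E$ whose images lie in $Y_{n+1}$ and $X_{n+1}$ respectively, such that $\phi_{n+1}$ extends an approximation of $\phi_n$, $\psi_n$ is an approximate inverse of $\phi_{n+1}$ on $Y_n$, $e_n$ eventually lies in some $X_m$, and $f_n$ eventually lies in some $Y_m$, with all errors summable (say bounded by $2^{-n}$ at stage $n$, measured in the first $n$ seminorms when $\lambda_E = \omega$, or all seminorms when $\lambda_E < \omega$).

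The recursion step is where approximate ultrahomogeneity enters. At an ``forth'' step, given $X_n \subseteq E$ I first find a multi-isometric embedding $\theta : X_n \to F$ using $\Age_{<\omega}(E) \preceq \Age_{<\omega}(F)$; I then enlarge $X_n$ by adjoining the next point $e_n$ to get $X_{n+1}$, and use $\Age_{<\omega}(E) \preceq \Age_{<\omega}(F)$ again to get a multi-isometric embedding $\theta' : X_{n+1} \to F$. The two embeddings $\theta$ and $\theta'\restriction X_n$ of $X_n$ into $F$ need not agree, but by approximate ultrahomogeneity of $F$ there is $g \in \Iso(F)$ with $g \circ \theta'\restriction X_n$ close to $\theta$ on $X_n$ (up to $2^{-n}$ in the relevant seminorms); composing, $g\circ\theta'$ is a multi-isometric embedding of $X_{n+1}$ that approximately extends $\theta$, and I set $\phi_{n+1} := g\circ\theta'$, letting $Y_{n+1}$ be a finite-dimensional subspace of $F$ containing its image (and the next point $f_n$ to be dealt with in the ``back'' step). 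The ``back'' step is symmetric, using approximate ultrahomogeneity of $E$ to make $\psi_{n+1}$ an approximate inverse on the enlarged domain. Since each correction is by an ambient isometry and the resulting embeddings only approximately match up, the standard care is needed to keep the compositions $\psi_{n+1}\circ\phi_{n+1}$ within $2^{-n}$ of the identity on $X_n$ and symmetrically.

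I then pass to the limit: on the dense set $\bigcup_n X_n$, the maps $\phi_m\restriction X_n$ for $m \ge n$ form a Cauchy sequence in the topology of pointwise convergence (errors summable), so they converge to a seminorm-preserving linear map $\Phi : \bigcup_n X_n \to F$, which extends by uniform continuity (indeed it is $1$-Lipschitz in each seminorm, being seminorm-preserving) to a seminorm-preserving linear map $\Phi : E \to F$; dually the $\psi_n$ yield $\Psi : F \to E$. The summability of the back-and-forth errors, together with the fact that each $e_n$ and each $f_n$ is eventually captured, forces $\Psi \circ \Phi = \mathrm{id}_E$ and $\Phi \circ \Psi = \mathrm{id}_F$ on the respective dense subspaces, hence everywhere by continuity; since $E$ and $F$ are separated (being Fr\'echet spaces), $\Phi$ and $\Psi$ are mutually inverse, surjective, seminorm-preserving linear maps, i.e. $\Phi$ is a multi-isometry. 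The main obstacle is the bookkeeping in the recursion: because approximate ultrahomogeneity only provides isometries that approximately correct a given embedding, one must choose the tolerance at stage $n$ small enough relative to the norms of the (finitely many) maps already built so that the accumulated error on the fixed finite-dimensional subspaces remains geometrically summable — this is routine but must be set up carefully, and is the reason the statement requires $\lambda_E = \lambda_F$ (so the ``length'' of the seminorm sequences match and the limit carries exactly $\lambda_E$ seminorms) and uses the density of $\bigcup_n X_n$ guaranteed by the inductive-limit construction.
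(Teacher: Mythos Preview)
Your proposal is correct and follows essentially the same metric back-and-forth strategy as the paper's proof: build increasing finite-dimensional subspaces on each side, use $\Age_{<\omega}(E)\equiv\Age_{<\omega}(F)$ to get embeddings and approximate ultrahomogeneity to correct them so the compositions are summably close to the identity, then pass to limits. The paper's bookkeeping is organized slightly differently---it tracks $\gamma_n:X_n\to Y_n$ and $\eta_n:Y_n\to X_{n+1}$ and controls $\|\eta_n\circ\gamma_n-\id_{X_n}\|$ and $\|\gamma_{n+1}\circ\eta_n-\id_{Y_n}\|$ rather than asking $\phi_{n+1}$ to approximately extend $\phi_n$, and it explicitly lets the seminorm-length parameters $k_n,l_n$ grow to $\lambda_E$---but this is the same argument.
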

\begin{proof}
Recursively define increasing sequences $(X_n)$ and $(Y_n)$ of elements of $\Age(E)$ and $\Age(F)$, respectively, sequences $(k_n)$ and $(l_n)$ of integers, and sequences of multi-isometric embeddings $(\gamma_n : X_n \rightarrow Y_n)$ and $(\eta_n : Y_n \rightarrow X_{n+1})$ such that the following conditions hold:
	\begin{enumerate}[(i)]
		\item $(k_n)$ and $(l_n)$ are non-decreasing and converge to $\lambda_E = \lambda_F$.
		\item $X_n \in \Age_{k_n}(E)$ and $Y_n \in \Age_{l_n}(F)$.
		\item $\|\eta_n \circ \gamma_n -  \id_{X_n} \|_{E,m} \leq 2^{-n}$ for all $m < k_n$.
		\item $\|\gamma_{n+1} \circ \eta_n - \id_{Y_n}\|_{F,m} \leq 2^{-n}$ for all $m < l_n$.
		\item $\bigcup_{n < \omega} X_n$ and $\bigcup_{n < \omega} Y_n$ are dense in $E$ and $F$, respectively.
	\end{enumerate}
To start, let $X_0 = Y_0 = \{0\}, \gamma_0 = 0$ and $k_0 = 1$. Now assume we have defined $X_n, Y_n, \gamma_n, \eta_{n-1}, k_n$ and $l_n$ for $n \geq 0$. Using the fact that $\Age_{<\omega}(E) \equiv \Age_{<\omega}(F)$, fix $\theta \in \Emb(Y_n, E)$. Since $E$ is approximately ultrahomogeneous, we can find $g \in \Iso(E)$ such that $$\|g \circ \theta \circ \gamma_n - \id_{X_n}\|_{E,m} \leq 2^{-n} \text{ for every $m < k_n$}.$$ Let $\eta_n := g \circ \theta \in \Emb(Y_n, E)$, let $k_{n+1} = \min\{l_n +1, \lambda_E\}$ and let $X_{n+1}$ be a finite-dimensional subspace of $E$ containing $X_n \cup \eta_n(Y_n)$, appropriately enlarged so that condition (v) will eventually hold, and equipped with the first $k_{n+1}$ seminorms from $E$. The construction of $Y_{n+1}, \gamma_{n+1} : Y_{n+1} \rightarrow X_{n+1}$ and $l_{n+1}$ is similar. This completes the inductive construction.

Now, note that for each $x \in \bigcup_{n < \omega} X_n$ the sequence $(\gamma_n(x))_n$ is Cauchy with respect to each seminorm $\|\cdot\|_{F,m}$: Given $m < \omega$, we can choose a large enough $N$ such that $m < \lambda_{X_n}$ and $x \in X_n$ for all $n > N$. Then $\|\gamma_{n+1}(x) - \gamma_n(x)\|_{F,m} \leq 2^{-(n-1)}$, which implies that the sequence $(\gamma_n)_{n<\omega}$ is pointwise Cauchy with respect to $\|\cdot\|_{F,m}$. Thus $(\gamma_n)_{n<\omega}$ is pointwise Cauchy in $F$, so by completeness we can define a linear mapping $\gamma : \bigcup_{n < \omega} X_n \rightarrow F$ by setting $\gamma(x) = \lim_{n \geq k} \gamma_n(x)$ where $k$ is least such that $x \in X_k$. Similarly, we can define a multi-isometric embedding $\eta : \bigcup_{n < \omega} Y_n \rightarrow E$ by $\eta(y) = \lim_{n \geq k} \eta_n(y)$ where $k$ is least such that $y \in Y_k$, and we extend it to a mapping $\eta : F \rightarrow E$. Then, since $E$ and $F$ are both separated, (iii) and (iv) imply $\gamma \circ \eta = \id_F$ and $\eta \circ \gamma = \id_E$, and so $\gamma$ is a multi-isometry.
\end{proof}

\begin{cor}[Fra\"iss\'e correspondence]
Let $\K$ be a class of finite-dimensional multi-seminormed spaces such that $\K \subseteq \M_{<\omega}$. The following are equivalent:
	\begin{enumerate}[(1)]
		\item $\K$ is a Fra\"iss\'e class.
		\item $\K \equiv \Age_{<\omega}(E)$ for a unique separable Fra\"iss\'e Fr\'echet space $E = \Flim(\K)$.
	\end{enumerate}
\end{cor}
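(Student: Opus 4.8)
The plan is to reduce both directions to Theorem~\ref{fraisse} and Proposition~\ref{unique}, so that only routine assembly remains.

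For $(1)\Rightarrow(2)$: assume $\K$ is a Fra\"iss\'e class and let $E:=\Flim(\K)$ be the space produced by Theorem~\ref{fraisse}. By that theorem $E$ is separable, $\K$-universal and $\K$-Fra\"iss\'e (with modulus $\varpi^*$), and since $\K$ is Fra\"iss\'e we also get $\Age_{<\omega}(E)\equiv\K$. I would first upgrade ``$\K$-Fra\"iss\'e'' to ``$\Age_{<\omega}(E)$-Fra\"iss\'e'' (so that $E$ is a Fra\"iss\'e Fr\'echet space and in particular approximately ultrahomogeneous): given $X\in\Age_{<\omega}(E)$, use $\K\equiv\Age_{<\omega}(E)$ to pick $X'\in\K$ and a multi-isometry $\phi\colon X\to X'$; for $\gamma,\eta\in\Emb_\delta(X,E)$ apply the $\K$-Fra\"iss\'e property to $\gamma\circ\phi^{-1},\eta\circ\phi^{-1}\in\Emb_\delta(X',E)$ and conjugate the resulting element of $\Iso(E)$ back through $\phi$; since $\phi$ preserves all seminorms and $\dim X'=\dim X$, $\lambda_{X'}=\lambda_X$, the same estimate persists. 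For uniqueness, let $F$ be any separable Fra\"iss\'e Fr\'echet space with $\K\equiv\Age_{<\omega}(F)$; then $\Age_{<\omega}(E)\equiv\Age_{<\omega}(F)$, and since for any Fr\'echet space $G$ one recovers $\lambda_G=\sup\{\lambda_X:X\in\Age_{<\omega}(G)\}$ (restrict all finitely many seminorms of $G$ to a subspace when $\lambda_G<\omega$, and take subspaces carrying the first $m$ seminorms for every $m$ when $\lambda_G=\omega$), we obtain $\lambda_E=\lambda_F$. Proposition~\ref{unique} then gives a multi-isometry of $E$ onto $F$, so $E$ is unique up to multi-isometry and equals $\Flim(\K)$.

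For $(2)\Rightarrow(1)$: assume $\K\equiv\Age_{<\omega}(E)$ for some separable Fra\"iss\'e Fr\'echet space $E$. It was observed earlier that $\Age_{<\omega}(E)$ is a Fra\"iss\'e class whenever $E$ is Fra\"iss\'e --- the hereditary property and $d_{\mathrm{BM}}$-closedness are immediate, while the amalgamation property (with a suitable modulus of stability) follows by realizing two $\delta$-isometric embeddings of a given $X$ as embeddings into $E$, using the Fra\"iss\'e property of $E$ to move one approximately onto the other, and taking for the amalgam a finite-dimensional subspace of $E$ containing both images, which is automatically separated since $E$ is; I would fill in these details. It then remains to transfer the Fra\"iss\'e axioms from $\Age_{<\omega}(E)$ to $\K$ along $\equiv$: $\K\subseteq\M_{<\omega}$ by hypothesis, and the hereditary property, $d_{\mathrm{BM}}$-closedness and amalgamation for $\K$ follow by precomposing the relevant embeddings with the multi-isometries supplied by $\equiv$, running the argument inside $\Age_{<\omega}(E)$, and pulling the amalgam back to $\K$ --- using throughout that $\equiv$-related spaces have equal dimension and length. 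Hence $\K$ is a Fra\"iss\'e class, and $E\cong\Flim(\K)$ by the uniqueness just established.

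The mathematics is all contained in Theorem~\ref{fraisse} (the inductive-limit construction) and Proposition~\ref{unique} (the back-and-forth); the points requiring genuine care are bookkeeping: verifying $\lambda_E=\lambda_F$ before applying Proposition~\ref{unique}, and being careful that one is transporting structure along the relation $\equiv$ rather than literal equality of classes. The latter is legitimate because the hereditary property already forces every Fra\"iss\'e class, and (up to isometry) the age of any Fr\'echet space, to be closed under multi-isometry, so on the classes in play $\equiv$ collapses to honest equality; I would make this identification explicit at the outset, as is standard in the metric Fra\"iss\'e setting.
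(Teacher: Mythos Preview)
Your proposal is correct and follows essentially the same route as the paper: both directions are assembled from Theorem~\ref{fraisse} and Proposition~\ref{unique}, and you supply details the paper leaves implicit (notably the check that $\lambda_E=\lambda_F$ before invoking Proposition~\ref{unique}). One point to tighten in $(2)\Rightarrow(1)$: the amalgam you take inside $E$ is \emph{not} ``automatically separated since $E$ is''---a finite-dimensional subspace carrying only the first $m$ seminorms of $E$ need not be separated---and the paper handles this by observing that, since the subspace is finite-dimensional and $E$ is separated, one may enlarge $m$ until it is; this is exactly the cofinality of separated members of $\Age_{<\omega}(E)$ that the paper singles out.
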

\begin{proof}
Suppose $\K$ is a Fra\"iss\'e class. By the previous two results, $\Flim(\K)$ exists and is unique. By construction, we have $\K \equiv \Age_{<\omega}(\Flim(\K))$. To prove the other direction of the corollary, assume $E$ is s separable Fra\"iss\'e Fr\'echet space such that $\K \equiv \Age_{<\omega}(E)$. Then $\K$ is a hereditary, $d_{\mathrm{BM}}$-closed class. Furthermore, the fact that $E$ is separated implies that the class of separated elements of $\Age_{<\omega}(E)$ is cofinal in $\Age_{<\omega}$. Indeed, given $(X, (\|\cdot\|_n)_{n<m}) \in \Age_{<\omega}(E)$, we can use the fact that $X$ is finite-dimensional to find a sufficiently large $N$ such that $(X, (\|\cdot\|_n)_{n<N})$ becomes a separated subspace of $E$. From this observation, the amalgamation property of $\K$ follows from the Fra\"iss\'e property of $E$. Thus $\K$ is a Fra\"iss\'e class.
\end{proof}

\begin{ex}\label{ex}
	\begin{enumerate}[(1)]
\item Let $\GG^\omega$ be the product of countably many copies of the Gurarij space $\GG$. In \cite{BKK} it is shown that there is a sequence $(\|\cdot\|_n)_{n < \omega}$ of seminorms on $\GG^\omega$ such that $(\GG^\omega, (\|\cdot\|_n)_{n < \omega})$ is a separable graded Fr\'echet space which is Fra\"iss\'e and universal for the class of all finite-dimensional graded multi-seminormed spaces with an infinite sequence of seminorms. It is also shown that there is a sequence $(\|\cdot\|'_n)_{n < \omega}$ of seminorms such that $(\GG^\omega, (\|\cdot\|'_n)_{n < \omega})$ is a separable Fr\'echet space which is Fra\"iss\'e and universal for the class of all finite-dimensional multi-seminormed spaces with an infinite sequence of seminorms. (The authors of \cite{BKK} do not use Fra\"iss\'e-theoretic terminology; however, this follows from \cite[Proposition 4.1]{BKK} and \cite[Proposition 5.5]{BKK}, respectively.) Below we will show that these two spaces can be obtained as Fra\"iss\'e limits of the classes $\G_{<\omega}$ and $\M_{<\omega}$, respectively.
		
\item For each $n \geq 1$, the spaces $\Flim(\M_{\leq n}^{\mathrm{sep}})$ and $\Flim(\G_{\leq n}^{\mathrm{sep}})$ can be seen as separated $n$-seminormed versions of the spaces considered in the previous example. (See Example \ref{examples} for the relevant notation.) Note that $\Age_{<\omega}(\Flim(\M_{\leq n}^{\mathrm{sep}}))$ is strictly larger than $\M_{\leq n}^{\mathrm{sep}}$, since the former collection contains non-separated multi-seminormed spaces. An analogous fact holds for $\Flim(\G_{\leq n}^{\mathrm{sep}})$.

\item Let $E$ be the space $(\GG^\omega, (\|\cdot\|_n)_{n < \omega})$ considered in Example \ref{ex}(1). For each $k \in \N$, let $E_k$ be the multi-seminormed space $(\GG^\omega, (\|\cdot\|_n)_{n < k})$ obtained by truncating the associated sequence of seminorms. Then using the properties of $(\GG^\omega, (\|\cdot\|_n)_{n < \omega})$ it follows that $E_k$ is universal and Fra\"iss\'e for $\M_{<k}$. An interesting special case occurs when $n=1$, since $E_1$ can be seen as a seminormed version of the Gurarij space. In fact, if we let $\|\cdot\|$ be the seminorm on $E_1$, then the quotient $E_1 / \ker \|\cdot\|$ equipped with the corresponding quotient norm is separable, universal and approximately ultrahomogeneous for the class of all finite-dimensional Banach spaces, and so it is isometric to the Gurarij space. Note that the spaces $E_k$ are not separated, and so it is unclear if they are unique up to isometry.
		
		\item $E = \Flim(\M_{<\omega}^{\mathcal{H}})$ is the unique separable Fr\'echet-Hilbert space which is $\M_{<\omega}^{\mathcal{H}}$-Fra\"iss\'e. To see that $E$ is indeed Fr\'echet-Hilbert, observe that each quotient space $E_{\|\cdot\|}$ (where $\|\cdot\|$ is a seminorm belonging to the sequence of seminorms associated to $E$) is approximately ultrahomogeneous for the class of all finite-dimensional Hilbert spaces, and hence is isometric to a Hilbert space.
		
		\item For each sequence $(p_n) \subseteq [1,\infty[$ with $p_n\notin  \{4,6,8,\dots\}$, $\Flim(\M_{<\omega}^{(p_n)})$ is a Fra\"iss\'e Fr\'echet space. In particular, if $p_n = p$ for each $n$, then $\Flim(\M_{<\omega}^p)$ can be seen as a \emph{Fr\'echet-$L_p$-space}, which is the $L_p$ analogue of the space considered in the previous example.
	\end{enumerate}
\end{ex}

To conclude this section, we will use Proposition \ref{unique} to show that the spaces considered in Example \ref{ex}(1) can be obtained as Fra\"iss\'e limits; we will make use of the fact that $E$ is universal and approximately ultrahomogeneous for the class of all finite-dimensional (graded) multi-seminormed spaces with an infinite sequence of seminorms. From now until the end of the section, $E$ will denote one of these two spaces. Before proceeding, we need some new terminology:  Given $n < \omega$ and two multi-seminormed spaces $X$ and $Y$ such that $\la_X = \lambda_Y = \omega$, an \emph{multi-isometric $n$-embedding} from $X$ to $Y$ is an injective linear mapping $f : X \rightarrow Y$ such that $\|f(x)\|_{Y,m} = \|x\|_{X,m}$ for each $x \in X$ and each $m < n$.

The following is a version of the near amalgamation property in the context of multi-isometric $n$-embeddings for a fixed $n$.

\begin{lem}
Suppose $X, Y$ and $Z$ are finite-dimensional separated multi-seminormed spaces with $\la_X = \lambda_Y = \lambda_Z =  \omega$ and $f : X \rightarrow Y, g : X \rightarrow Z$ are multi-isometric $n$-embeddings for a fixed $n < \omega$. Then for every $\varep > 0$ there is a finite-dimensional multi-seminormed space $W$ with $\lambda_W = \omega$ and multi-isometric embeddings $i_Y : Y \rightarrow W$ and $i_Z : Z \rightarrow W$ such that $\| i_Y \circ f - i_Z \circ g \|_m \leq \varep$ for all $m < n$. Furthermore, $W \in \G$ whenever $X,Y,Z \in \G$.
\end{lem}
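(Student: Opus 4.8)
The plan is to re-run the pushout construction used in the proof of Proposition~\ref{NAP}, applied to the first $n$ seminorms of $X$ and with distortion parameter $\delta=0$. Write $X^{(n)}:=(X,(\|\cdot\|_{X,m})_{m<n})$ for the multi-seminormed space of length $n$ obtained by truncating $X$. Since $f$ and $g$ are multi-isometric $n$-embeddings, for every $x\in X$ and $m<n$ we have $\|f(x)\|_{Y,m}=\|x\|_{X,m}=\|x\|_{X^{(n)},m}$ and likewise for $g$; thus $f$ and $g$ are \emph{expansive} multi-isometric embeddings of $X^{(n)}$ into $Y$ and into $Z$ in the sense of the proof of Proposition~\ref{NAP}, and of course $\lambda_{X^{(n)}}=n\le\omega=\lambda_Y=\lambda_Z$.

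First I would define $W$ to be the vector space $Y\oplus Z$ equipped with the sequence of seminorms $(\|\cdot\|_m)_{m<\omega}$ given, for $m<n$, by
\[
\|(y,z)\|_m:=\inf\bigl\{\|u\|_{Y,m}+\|v\|_{Z,m}+\varep\,\|x\|_{X,m}:\ x\in X,\ u\in Y,\ v\in Z,\ y=u+f(x),\ z=v-g(x)\bigr\},
\]
and, for $m\ge n$, by $\|(y,z)\|_m:=\max\{\|y\|_{Y,m},\|z\|_{Z,m}\}$ in the general case, and by $\|(y,z)\|_m:=\max\{\|(y,z)\|_{m-1},\|y\|_{Y,m},\|z\|_{Z,m}\}$ in the graded case. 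Then $W$ is a finite-dimensional multi-seminormed space with $\lambda_W=\omega$, and by the same verification as in Proposition~\ref{NAP} the sequence $(\|\cdot\|_m)_m$ is graded whenever $X,Y,Z$ are graded, so $W\in\G$ in that case.

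Next I would take $i_Y\colon Y\to W$ and $i_Z\colon Z\to W$ to be the canonical inclusions $i_Y(y)=(y,0)$, $i_Z(z)=(0,z)$, which are injective. That $i_Y$ and $i_Z$ preserve every seminorm is checked exactly as in the proof of Proposition~\ref{NAP}: for $m\ge n$ it is immediate from the definition, and for $m<n$ one uses $\|f(x)\|_{Y,m}=\|x\|_{X,m}$, $\|g(x)\|_{Z,m}=\|x\|_{X,m}$ together with the triangle inequality to show that every admissible decomposition of $(y,0)$ (resp.\ of $(0,z)$) has cost at least $\|y\|_{Y,m}$ (resp.\ $\|z\|_{Z,m}$), the opposite inequality being witnessed by the trivial decomposition $u=y,\,v=0,\,x=0$ (resp.\ $u=0,\,v=z,\,x=0$). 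Finally, for each $x\in X$ and $m<n$, inserting the decomposition $u=v=0$ into the infimum defining $\|\cdot\|_m$ gives $\|i_Y(f(x))-i_Z(g(x))\|_m=\|(f(x),-g(x))\|_m\le\varep\,\|x\|_{X,m}$, whence $\|i_Y\circ f-i_Z\circ g\|_m\le\varep$ for all $m<n$, as required. (If desired one can additionally make $W$ separated by passing to the quotient of $W$ by $\bigcap_m\ker\|\cdot\|_{W,m}$; since $Y$ and $Z$ are separated this leaves $i_Y,i_Z$ injective and does not affect the above estimate.)

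I do not expect any genuinely hard step here: the whole content is to recognize that the relevant amalgamation is an amalgamation over the first $n$ seminorms only, i.e.\ over $X^{(n)}$ rather than over $X$, after which everything is a literal special case ($\delta=0$) of the construction behind Proposition~\ref{NAP}. The one subtlety worth flagging is that $X^{(n)}$ need not be separated even though $X$ is, so Proposition~\ref{NAP} cannot be quoted verbatim (its hypotheses place the amalgamation base inside a separated class); however, neither the construction of $W$ nor the verification that $i_Y,i_Z$ are multi-isometric embeddings uses separatedness of the base, so carrying out that special case directly presents no difficulty.
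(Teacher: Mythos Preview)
Your proof is correct and follows essentially the same pushout construction as the paper. The only cosmetic difference is that for $m\ge n$ the paper defines $\|(y,z)\|_m:=\|y\|_{Y,m}+\|z\|_{Z,m}$ (the $\ell_1$-sum, which is automatically graded at the transition $m=n$ since the pushout seminorm is bounded by the trivial decomposition), whereas you follow Proposition~\ref{NAP} verbatim and use $\max$ together with the recursive variant in the graded case; both choices work and the verifications are identical.
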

\begin{proof}
Fix all parameters and consider the sum $Y \oplus Z$ together with the canonical inclusion mappings $i_Y : Y \rightarrow Y \oplus Z$ and $i_Z : Z \rightarrow Y \oplus Z$. Equip $Y \oplus Z$ with a sequence $(\|\cdot\|_m)_{m < \omega}$ of seminorms defined by declaring $$\|(y,z)\|_m := \inf\{\|u\|_{Y,m} + \|v\|_{Z,m} + \varep \|x\|_{X,m} : \, x \in X, u \in Y, v \in Z,  y = u + f(x), z = v - g(x)\}$$ for each $m < n$, and $\|(y,z)\|_m := \|y\|_{Y,m} + \|z\|_{Z,m}$ for each $m \geq n$. Let $W$ be the sum $Y \oplus Z$ equipped with this sequence of seminorms. Note that $W$ is a graded multi-seminormed space whenever $X,Y$ and $Z$ are graded. As in the proof of Lemma \ref{NAP}, it is straightforward to check that the inclusion mappings $i_Y : Y \rightarrow Y \oplus Z$ and $i_Z : Z \rightarrow Y \oplus Z$ are multi-isometric embeddings which satisfy $\|i_Y \circ f - i_Z \circ g\|_m \leq \varep$ for all $m<n$.
\end{proof}

\begin{lem}
Let $X$ be a finite-dimensional subspace of $E$ such that $\la_X = \omega$. For every $\varep > 0, n \in \N$, finite-dimensional multi-seminormed space $Y$ and multi-isometric $n$-embeddings $\eta : X \rightarrow E$ and $\gamma : X \rightarrow Y$, there is a multi-isometric embedding $f : Y \rightarrow E$ such that $\| f \circ \gamma - \eta \|_m \leq \varep$ for each $m < n$.
\end{lem}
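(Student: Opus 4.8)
The plan is to factor the desired embedding $f$ through an amalgam of $Y$ with the concrete finite-dimensional subspace $\eta(X) \subseteq E$, to place that amalgam inside $E$ by universality, and then to realign the resulting copy of $\eta(X)$ with its original position using approximate ultrahomogeneity of $E$. The point is that one cannot simply ``transport $Y$ along $\eta$'': since $\eta$ preserves only the first $n$ seminorms it is not a full multi-isometric embedding, whereas the map $f$ we must produce is required to preserve \emph{all} seminorms while matching $\eta$ only on the first $n$.

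Concretely, I would first set $X_1 := \eta(X)$, regarded as a subspace of $E$ carrying the full sequence of seminorms inherited from $E$; then $\la_{X_1} = \omega$, $X_1$ is separated (being a subspace of $E$), it is graded when $E$ is the graded space, and $\eta$ becomes a multi-isometric $n$-embedding $X \to X_1$. We may assume $Y$ is separated, since a multi-isometric embedding $Y \to E$ can exist only then. Applying the preceding lemma to the pivot $X$ with legs $X_1$ (via $\eta$) and $Y$ (via $\gamma$) and error $\varep/2$, we obtain a finite-dimensional multi-seminormed space $W$ with $\la_W = \omega$ (graded in the graded case) together with multi-isometric embeddings $i_{X_1} : X_1 \to W$ and $i_Y : Y \to W$ such that $\|i_{X_1}\circ\eta - i_Y\circ\gamma\|_m \le \varep/2$ for every $m < n$. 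Since $E$ is universal for the class of all finite-dimensional (graded, when $E$ is the graded space) multi-seminormed spaces with an infinite sequence of seminorms, there is a multi-isometric embedding $j : W \to E$.

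Now both the inclusion $\iota : X_1 \hookrightarrow E$ and the composite $j\circ i_{X_1} : X_1 \to E$ are full multi-isometric embeddings of $X_1 \in \Age_\omega(E)$ into $E$, so approximate ultrahomogeneity of $E$, applied with parameter $\varep/2$, yields $g \in \Iso(E)$ with $\|g\circ j\circ i_{X_1} - \iota\|_m \le \varep/2$ for all $m < \la_{X_1} = \omega$, in particular for all $m < n$. I would then set $f := g\circ j\circ i_Y$, which is a multi-isometric embedding of $Y$ into $E$ as a composition of multi-isometric embeddings with a multi-isometry. Finally, for $m < n$ and $x \in X$ one writes $f(\gamma(x)) - \eta(x) = g\big(j(i_Y(\gamma(x)) - i_{X_1}(\eta(x)))\big) + \big(g(j(i_{X_1}(\eta(x)))) - \eta(x)\big)$; since $g$ and $j$ preserve every seminorm and $\eta$ preserves the first $n$, each of the two summands has $m$-th seminorm at most $\tfrac{\varep}{2}\|x\|_{X,m}$, whence $\|f\circ\gamma - \eta\|_m \le \varep$ for every $m < n$, as required. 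Every step is routine once the route is chosen; the only thing one has to be careful about — and the single place such an argument can go wrong — is bookkeeping which maps are full multi-isometric embeddings versus mere $n$-embeddings, so that universality and approximate ultrahomogeneity (both of which need full embeddings) are invoked only for $X_1$ and $W$ and never for $X$ or $Y$ along $\eta$, together with checking that $W$ lands in the correct class (graded or general) so that universality of $E$ applies.
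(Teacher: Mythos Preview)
Your proof is correct and follows essentially the same route as the paper's: amalgamate $Y$ and $\eta(X)$ over $X$ via the preceding lemma, place the amalgam $W$ inside $E$, and then realign the copy of $\eta(X)$ with its original position using approximate ultrahomogeneity. The only difference is cosmetic: the paper silently identifies $W$ with a subspace of $E$ (writing $f = g\restriction_W \circ i_Y$ without naming the universality embedding), whereas you make the embedding $j : W \to E$ explicit and track it through the final estimate; your version is arguably cleaner for exactly the bookkeeping reason you flag.
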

\begin{proof}
Apply the previous lemma to $\varep/2$, $\gamma : X \rightarrow Y$ and $\eta : X \rightarrow Z := \eta(X)$ to find the corresponding $W, i_Y$ and $i_Z$. Since $E$ is approximately ultrahomogeneous, there is $g \in \Iso(E)$ such that $\| g(i_Z(z)) - z \|_m \leq \frac{\varep}{2} \|z\|_m$ for all $z \in Z$ and $m \in \N$. Let $f = g\restriction_W \circ \, i_Y$. Then for each $x \in X$ and $m < n$ we have
\begin{equation*}
\begin{split}
\|f(\gamma(x)) - \eta(x)\|_m & \leq \|g(i_Y(\gamma(x))) - g(i_Z(\eta(x)))\|_m + \|g(i_Z(\eta(x))) - \eta(x)\|_m \\
& \leq \|g(i_Y(\gamma(x))-i_Z(\eta(x)))\|_m + \frac{\varep}{2}\|\eta(x)\|_m\\
& \leq \frac{\varep}{2}\|x\|_m + \frac{\varep}{2} \|x\|_m \leq \varep\|x\|_m.
\end{split}
\end{equation*}
Thus $f$ is the desired multi-isometric embedding.
\end{proof}

\begin{cor}
$(\GG^\omega, (\|\cdot\|_n)_{n < \omega})$ and $(\GG^\omega, (\|\cdot\|'_n)_{n < \omega})$ are approximately ultrahomogeneous for $\G_{<\omega}$ and $\M_{<\omega}$, respectively. In particular, $(\GG^\omega, (\|\cdot\|_n)_{n < \omega}) = \Flim(\G_{<\omega})$ and $(\GG^\omega, (\|\cdot\|'_n)_{n < \omega}) = \Flim(\M_{<\omega})$.
\end{cor}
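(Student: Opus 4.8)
The plan is to deduce the corollary from the two lemmas above together with the recorded fact from \cite{BKK} that $E$ is universal and approximately ultrahomogeneous for the class of all finite-dimensional (graded) multi-seminormed spaces with an infinite sequence of seminorms; throughout, $E$ stands for $(\GG^\omega, (\|\cdot\|_n)_{n<\omega})$ in the graded case and for $(\GG^\omega, (\|\cdot\|'_n)_{n<\omega})$ in the general case, and we write $\K$ for $\G_{<\omega}$, respectively $\M_{<\omega}$. By Proposition \ref{unique} it will suffice to prove that $E$ is approximately $\K$-ultrahomogeneous and that $\Age_{<\omega}(E) \equiv \K$, since then $E$ and $\Flim(\K)$ will both be separable approximately ultrahomogeneous Fr\'echet spaces of length $\omega$ with age $\equiv \K$.

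For the ultrahomogeneity statement, fix $X \in \K$ with $\la_X = k < \omega$, fix $\varep > 0$, and let $\gamma, \eta \in \Emb(X,E)$. Regard $X_\gamma := \gamma(X)$ as a finite-dimensional subspace of $E$ carrying the full sequence $(\|\cdot\|_{E,n})_{n<\omega}$; then $X_\gamma$ is separated (and graded), $\la_{X_\gamma} = \omega$, and $\gamma$ is a linear bijection of $X$ onto $X_\gamma$ that preserves the first $k$ seminorms. Hence $\eta \circ \gamma^{-1} : X_\gamma \to E$ is injective and satisfies $\|\eta(\gamma^{-1}(x'))\|_{E,m} = \|x'\|_{E,m}$ for all $x' \in X_\gamma$ and $m < k$, i.e. it is a multi-isometric $k$-embedding. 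Now apply the second lemma above with $X_\gamma$ in the role of its ``$X$'', with $n := k$, with $Y := X_\gamma$, with $\id_{X_\gamma}$ in the role of its ``$\gamma$'', and with $\eta \circ \gamma^{-1}$ in the role of its ``$\eta$''. This produces a multi-isometric embedding $f : X_\gamma \to E$ with $\|f(\gamma(x)) - \eta(x)\|_{E,m} \le (\varep/2)\,\|x\|_{X,m}$ for all $x \in X$ and $m < k$.

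It remains to upgrade $f$ to a surjective multi-isometry. The space $X_\gamma$ is a finite-dimensional (graded) multi-seminormed space with $\la_{X_\gamma} = \omega$, and both $f$ and the inclusion $\iota : X_\gamma \hookrightarrow E$ lie in $\Emb(X_\gamma, E)$, so the cited approximate ultrahomogeneity of $E$ for this class gives $g \in \Iso(E)$ with $\|g(x') - f(x')\|_{E,m} \le (\varep/2)\,\|x'\|_{E,m}$ for all $x' \in X_\gamma$ and $m < \omega$. For $x \in X$ and $m < k$ the triangle inequality then yields $\|g(\gamma(x)) - \eta(x)\|_{E,m} \le \|g(\gamma(x)) - f(\gamma(x))\|_{E,m} + \|f(\gamma(x)) - \eta(x)\|_{E,m} \le (\varep/2)\|\gamma(x)\|_{E,m} + (\varep/2)\|x\|_{X,m} = \varep\,\|x\|_{X,m}$, so $\|g \circ \gamma - \eta\|_m \le \varep$ for every $m < \la_X$. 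Thus $E$ is approximately $\K$-ultrahomogeneous.

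Finally, $\Age_{<\omega}(E) \preceq \K$ because every member of $\Age_{<\omega}(E)$ is a finite-dimensional (graded) multi-seminormed space with finitely many seminorms; conversely, given $X \in \K$ with $\la_X = k$ one extends its sequence of seminorms to an infinite one so as to obtain a separated finite-dimensional (graded) space $\widehat X$ with the same first $k$ seminorms (in the graded case, e.g. by taking at step $m\ge k$ the maximum of $\|\cdot\|_{X,k-1}$ with an increasing unbounded sequence of multiples of a fixed norm on $X$), and by universality $\Emb(\widehat X, E) \neq \varnothing$; any such embedding, viewed as a map on $X$, preserves the first $k$ seminorms of $X$ and is injective, hence lies in $\Emb(X,E)$, whence $\K \preceq \Age_{<\omega}(E)$. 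Since $E$ is moreover approximately ultrahomogeneous — the infinite-length part of its age being covered by the cited \cite{BKK} result and the finite-length part by the paragraph above, using $\Age_{<\omega}(E) \subseteq \K$ — Proposition \ref{unique} applied to $E$ and $\Flim(\K)$ yields a multi-isometry between them, i.e. $E = \Flim(\K)$. The main obstacle is the bookkeeping of the middle two paragraphs: correctly passing between the finite-length $X$ and the infinite-length subspace $X_\gamma \subseteq E$, checking that $\eta\circ\gamma^{-1}$ is genuinely a multi-isometric $k$-embedding, and splitting the error budget between the lemma and the ambient homogeneity so that the two approximations compose to beat $\varep$.
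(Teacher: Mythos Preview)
Your proof is correct and follows essentially the same route as the paper: apply the second lemma to pass from the two given finite-length embeddings to a single multi-isometric embedding $f$, then use the ambient approximate ultrahomogeneity of $E$ (from \cite{BKK}) to upgrade $f$ to a global multi-isometry, splitting the error as $\varep/2+\varep/2$. The only cosmetic difference is that the paper extends the seminorms on $X$ itself to length $\omega$ and feeds $X$ into the lemma, whereas you work with $X_\gamma=\gamma(X)\subseteq E$ and feed $\id_{X_\gamma}$ and $\eta\circ\gamma^{-1}$ into the lemma; your bookkeeping matches the lemma's hypothesis ``$X$ a subspace of $E$'' more literally, and you spell out the $\Age_{<\omega}(E)\equiv\K$ step needed for Proposition~\ref{unique}, which the paper leaves implicit.
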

\begin{proof}
We only prove the result for $\G_{<\omega}$; the proof for $\M_{<\omega}$ is identical. Fix $X \in \G_{<\omega}$ together with multi-isometric embeddings $\gamma, \eta : X \rightarrow E$. Extend the sequence of seminorms $(\|\cdot\|_{X, n})_{n < \la_X}$ to an infinite sequence in the natural way by declaring $\|\cdot\|_{X,m} = \|\cdot\|_{X,\la_X - 1}$ for all $m \geq \la_X$. Then $\gamma$ and $\eta$ become multi-isometric $\la_X$-embeddings. Let $Y = \gamma(X) \subseteq E$ be equipped with the sequence of seminorms from $E$ and apply the previous lemma to find a multi-isometric embedding $f : Y \rightarrow E$ such that $\|f(\gamma(x)) - \eta(x) \|_m \leq \frac{\varep}{2} \|x\|_m$ for each $x \in X$ and $m < \la_X$. By the approximate ultrahomogeneity of $E$, there is $g \in \Iso(E)$ such that $\|g(y) - f(y)\|_m \leq \frac{\varep}{2}\|y\|_m$ for each $y \in Y$ and $m \in \N$. Then $$\|g(\gamma(x)) - \eta(x)\|_m \leq \|g(\gamma(x)) - f(\gamma(x))\|_m + \|f(\gamma(x)) - \eta(x)\|_m \leq \varep \|x\|_m$$ for each $x \in X$ and $m < \la_X$, as required.
\end{proof}

\section{The approximate Ramsey property}

As in \cite{BLLM} or \cite{FLMT}, we can characterize the extreme amenability of the group of multi-isometries of certain Fr\'echet spaces in terms of an approximate Ramsey property. Before stating the relevant Ramsey properties, we need some terminology. Given $r \in \N$, an \emph{$r$-colouring} of a set $X$ is simply a mapping $X \rightarrow r$. If $X$ is equipped with a finite sequence of seminorms $(\|\cdot\|_m)_{m < \la_X}$ and $n \leq \la_X$, an \emph{$n$-continuous colouring} of $X$ is a mapping $c : X \rightarrow [0,1]$ such that $$|c(x) - c(y)| \leq \max_{m < n} \|x-y\|_m \text{ for all $x, y \in X$}.$$ Thus, an $n$-continuous colouring is simply a mapping of $X$ into $[0,1]$ which is 1-Lipschitz with respect to the pseudometric induced by $\max_{m < n} \|\cdot\|_m$.

\begin{defi}
Let $\K$ be a collection of finite-dimensional multi-seminormed spaces.
	\begin{enumerate}[(a)]
	\item $\K$ has the \emph{discrete approximate Ramsey property} (discrete ARP) if for every $X, Y \in \K, r \in \N$ and $\varep > 0$ there is $Z \in \K$ such that every $r$-colouring of $\Emb(X,Z)$ \emph{$\varep$-stabilizes} on a set of the form $\gamma \circ \Emb(X,Y)$ for some $\gamma \in \Emb(Y,Z)$, i.e. there is $i < r$ such that $\gamma \circ \Emb(X,Y)$ is contained in the set $$(c^{-1}\{i\})_\varep := \{\xi \in \Emb(X,Z) : \exists \eta \, \text{ $c(\eta) = i$ and $\|\xi-\eta\|_m \leq \varep$ for all $m < \la_X$}\},$$ where the $m^{\mathrm{th}}$ seminorm $\|\cdot\|_m$ on the space of embeddings $\ga : X \rightarrow Z$ is defined by setting $\|\ga\|_m := \sup \{ \|\ga(x)\|_m : x \in X, \|x\|_m =1 \}.$ In this case we will also say that $\gamma \circ \Emb(X,Y)$ is \emph{$\varep$-monochromatic}.
	\item $\K$ has the \emph{continuous approximate Ramsey property} (continuous ARP) if for every $X \in \K_{<\omega}, Y \in \K$ and $\varep > 0$ there is $Z \in \K$ such that for every $\la_X$-continuous colouring $c$ of $\Emb(X,Z)$ there is $\gamma \in \Emb(Y,Z)$ such that the \emph{oscillation} of $c$ on $\gamma \circ \Emb(X,Y)$, defined as $$\osc(c \restriction \gamma \circ \Emb(X,Y)) := \sup\{|c(\gamma \circ \eta) - c(\gamma \circ \eta')| : \eta, \eta' \in \Emb(X,Y)\},$$ is at most $\varep$. In this case we will say that $c$ \emph{$\varep$-stabilizes} on $\gamma \circ \Emb(X,Y)$.
	\end{enumerate}
\end{defi}

We will abbreviate the conclusions of the discrete and continuous ARP by writing $Z \rightarrow_\varep (Y)^X_r$ and $Z \rightarrow_\varep (Y)^X$, respectively. Exactly as in \cite{BLLM}, it turns out that these two notions are equivalent.

\begin{lem}
Let $\K \subseteq \M_{<\omega}$. Then $\K$ satisfies the discrete ARP if and only if it satisfies the continuous ARP.
\end{lem}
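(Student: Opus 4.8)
The plan is to follow the scheme used for Banach spaces in \cite{BLLM}. Since $\K \subseteq \M_{<\omega}$ we have $\K = \K_{<\omega}$, so the spaces $X$ occurring in both properties range over the same class. For $X$ with $\la_X < \omega$ write $\rho_X(\gamma, \gamma') := \max_{m < \la_X} \nrm{\gamma - \gamma'}_m$ for the pseudometric on $\Emb(X, \cdot)$ coming from the seminorms in part (a); then a $\la_X$-continuous colouring is exactly a map into $[0,1]$ which is $1$-Lipschitz for $\rho_X$, and $(c^{-1}\{i\})_\varep$ is the set of $\xi$ with $\mathrm{dist}_{\rho_X}(\xi, c^{-1}\{i\}) \le \varep$, i.e. the $\varep$-neighbourhood of the colour class. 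In the arguments below one checks routinely that all the relevant sets of multi-isometric embeddings are nonempty; in particular we may assume $\Emb(X,Y) \neq \varnothing$, since otherwise there is nothing to prove.

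\emph{Discrete ARP $\Rightarrow$ continuous ARP (the routine direction).} Given $X, Y \in \K$ and $\varep > 0$, pick $r \in \N$ and a partition of $[0,1]$ into intervals $I_0, \dots, I_{r-1}$ of length at most $\varep/2$, and use the discrete ARP on $X, Y, r$ with parameter $\varep/4$ to get $Z \in \K$ with $Z \to_{\varep/4} (Y)^X_r$. To see $Z \to_\varep (Y)^X$, take a $\la_X$-continuous colouring $c : \Emb(X,Z) \to [0,1]$ and let $\bar c(\xi)$ be the index $i$ with $c(\xi) \in I_i$. Discrete ARP gives $\gamma \in \Emb(Y,Z)$ and $i < r$ with $\gamma \circ \Emb(X,Y) \subseteq (\bar c^{-1}\{i\})_{\varep/4}$; so for each $\eta \in \Emb(X,Y)$ there is $\zeta_\eta$ with $\bar c(\zeta_\eta) = i$ and $\rho_X(\gamma\circ\eta, \zeta_\eta) \le \varep/4$, whence $|c(\gamma\circ\eta) - c(\zeta_\eta)| \le \varep/4$ and $c(\gamma\circ\eta)$ lies within $\varep/4$ of $I_i$. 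Thus all values $c(\gamma\circ\eta)$, $\eta \in \Emb(X,Y)$, lie in one interval of length $\le |I_i| + \varep/2 \le \varep$, so the oscillation of $c$ on $\gamma\circ\Emb(X,Y)$ is at most $\varep$.

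\emph{Continuous ARP $\Rightarrow$ discrete ARP (the main step).} Fix $X, Y \in \K$, $r \in \N$ and $0 < \varep < 1$, set $\varep' := \varep/2$, and use the continuous ARP repeatedly to build $Y = Y_0$ and $Y_j \in \K$ with $Y_j \to_{\varep'} (Y_{j-1})^X$ for $1 \le j \le r$; set $Z := Y_r$ and fix $\iota_j \in \Emb(Y_{j-1}, Y_j)$ with composites $\iota^{(j)} := \iota_j \circ \cdots \circ \iota_1 \in \Emb(Y, Y_j)$ and $\iota^{(0)} = \id$. For $S \subseteq \Emb(X,Y_k)$ the map $f^k_S(\xi) := \min\{1, \mathrm{dist}_{\rho_X}(\xi, S)\}$ is a $\la_X$-continuous colouring of $\Emb(X, Y_k)$. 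Given an $r$-colouring $c$ of $\Emb(X,Z)$, run a recursion on $k = r, r-1, \dots$ maintaining a colour set $C_k \subseteq r$ with $|C_k| = k$ and a multi-isometric embedding $\Theta_k : Y_k \to Z$ with $c(\Theta_k \circ \xi) \in C_k$ for every $\xi \in \Emb(X,Y_k)$, starting from $C_r = r$ and $\Theta_r = \id_Z$. At stage $k$ choose $i \in C_k$, put $S_i := \{\xi \in \Emb(X,Y_k) : c(\Theta_k\circ\xi) = i\}$, apply $Y_k \to_{\varep'} (Y_{k-1})^X$ to the colouring $f^k_{S_i}$ to obtain $\sigma_k \in \Emb(Y_{k-1}, Y_k)$ with $\osc(f^k_{S_i} \restriction \sigma_k \circ \Emb(X,Y_{k-1})) \le \varep'$, and set $a := \inf_{\eta} f^k_{S_i}(\sigma_k\circ\eta)$.

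\emph{Case analysis and conclusion.} If $a < \varep/2$, stop and set $\gamma := \Theta_k \circ \sigma_k \circ \iota^{(k-1)} \in \Emb(Y, Z)$: for $\eta \in \Emb(X,Y)$, writing $\xi := \iota^{(k-1)}\circ\eta$ we have $\gamma\circ\eta = \Theta_k\circ(\sigma_k\circ\xi)$, and since a multi-isometric embedding acts $\rho_X$-isometrically on spaces of embeddings (here $m < \la_X \le \la_{Y_k}$) and $\Theta_k(S_i) \subseteq c^{-1}\{i\}$, we get $\mathrm{dist}_{\rho_X}(\gamma\circ\eta, c^{-1}\{i\}) \le \mathrm{dist}_{\rho_X}(\sigma_k\circ\xi, S_i) = f^k_{S_i}(\sigma_k\circ\xi) \le a + \varep' < \varep$, the middle equality holding because the right side is $< 1$; hence $\gamma\circ\Emb(X,Y) \subseteq (c^{-1}\{i\})_\varep$. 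If instead $a \ge \varep/2 > 0$, then $f^k_{S_i}(\sigma_k\circ\eta) > 0$, so $c(\Theta_k\circ\sigma_k\circ\eta) \ne i$, for all $\eta \in \Emb(X,Y_{k-1})$; put $C_{k-1} := C_k \setminus\{i\}$ and $\Theta_{k-1} := \Theta_k \circ \sigma_k$, which satisfy the invariant, and continue. Since $|C_k|$ drops by one at each non-terminating stage, at $k=1$ we have $|C_1| = 1$, forcing $S_i = \Emb(X,Y_1)$, $f^1_{S_i} \equiv 0$, $a = 0 < \varep/2$, so the recursion halts; this proves $Z \to_\varep (Y)^X_r$. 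The real content is entirely in this second direction, where the main points to get right are that multi-isometric embeddings are $\rho_X$-isometries on the spaces of embeddings (so that distances to the iteratively pulled-back colour classes are transported faithfully into $Z$) and the colour-elimination bookkeeping that guarantees termination after at most $r$ stages; the threshold choices ($\varep' = \varep/2$, case split at $\varep/2$) are elementary.
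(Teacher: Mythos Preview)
Your proof is correct and follows essentially the same approach as the paper. Both directions match: discretizing a continuous colouring for one implication, and using the distance-to-a-colour-class function with a two-case dichotomy (the colour is either $\varep$-close everywhere or can be eliminated) for the other; the paper organizes the latter as an induction on the number of colours $r$, whereas you unroll that induction into an explicit tower $Y_0,\dots,Y_r$ with a top-down colour-elimination loop, but this is only a cosmetic difference.
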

\begin{proof}  
Suppose first that $\K$ has the discrete ARP. Fix $X,Y \in \K$ and $\varep > 0$. Let $D \subseteq [0,1]$ be a finite $\varep$-dense set. Apply the discrete ARP with $|D|$-many colours to find $Z \in \K$ such that $Z \rightarrow_\varep (Y)^X_{|D|}$. We claim that $Z$ witnesses the continuous ARP for the above parameters. Indeed, given a $\la_X$-continuous colouring $c : \Emb(X,Z) \rightarrow [0,1]$, define a $|D|$-colouring $\widetilde{c} : \Emb(X,Z) \rightarrow D$ by the condition $|c(\varphi) - \widetilde{c}(\varphi)| \leq \varep$ for every $\varphi \in \Emb(X,Z)$. Then there is $\gamma \in \Emb(Y,Z)$ such that $\widetilde{c}$ $\varep$-stabilizes on $\gamma \circ \Emb(X,Y)$. It follows that $c$ $4\varep$-stabilizes on $\gamma \circ \Emb(X,Y)$.

Now suppose $\K$ has the continuous ARP. We prove that $\K$ has the discrete ARP by induction on $r$, the number of colours. When $r=1$ this is trivial, so suppose inductively that $\K$ satisfies the discrete ARP for $r$-colourings. Fix $X,Y \in \K$ and $\varep > 0$. By the inductive hypothesis, there is $Z_0 \in \K$ such that $Z_0 \rightarrow_\varep (Y)^X_r$. Since $\K$ has the continuous ARP, there is $Z \in \K$ such that $Z \rightarrow_\varep (Z_0)^X$. We claim that $Z$ witnesses the discrete ARP for the parameters $X,Y, \varep$ and $r+1$. Indeed, fix a colouring $c: \Emb(X,Z) \rightarrow r+1$ and define $\widetilde{c} : \Emb(X,Z) \rightarrow [0,1]$ by setting $$\widetilde{c}(\varphi) = \min\left\{1,  \inf_{\psi \in c^{-1}\{r\}} \max_{m < \la_X} \| \varphi - \psi \|_m  \right\}.$$ It is routine to check that $\widetilde{c}$ is an $\la_X$-continuous colouring, so there is $\gamma \in \Emb(Z_0, Z)$ such that $\widetilde{c}$ $\varep$-stabilizes on $\gamma \circ \Emb(X,Z_0)$. If there is some $\varphi \in \Emb(X,Z_0)$ such that $c(\gamma \circ \varphi) = r$, then $\gamma \circ \Emb(X, Z_0) \subseteq (c^{-1}\{r\})_\varep$ and so we are done since then $c$ $\varep$-stabilizes on $\gamma \circ \gamma_0 \circ \Emb(X,Y)$ for any choice of $\gamma_0 \in \Emb(Y,Z_0)$. If no such $\varphi$ exists, we can define an $r$-colouring $d$ of $\Emb(X,Z_0)$ by setting $d(\varphi) = c(\gamma \circ \varphi)$. By definition of $Z_0$, there is $\gamma_0 \in \Emb(Y,Z_0)$ such that $d$ $\varep$-stabilizes on $\gamma_0 \circ \Emb(X,Y)$. It then follows that $c$ $\varep$-stabilizes on $\gamma \circ \gamma_0 \circ \Emb(X,Y)$.
\end{proof}

The next lemma is a particular instance of a more general phenomenon which rephrases the Ramsey property of an age in terms of its limit. The proof is similar to that of \cite[Proposition 3.4]{MT}. Recall that $\Age_n(E)$ denotes the collection of all finite-dimensional linear subspaces $X \subseteq E$ equipped with the first $n$ seminorms from $E$.

\begin{lem}
Suppose $E$ is an approximately ultrahomogeneous multi-seminormed space. Then the collection $\Age_n(E)$ has the continuous ARP if and only if for every $X,Y \in \Age_n(E), \varep > 0$ and $\la_X$-continuous colouring $c$ of $\Emb(X,E)$, there is $\gamma \in \Emb(Y,E)$ such that $\osc(c\restriction \gamma \circ \Emb(X,Y)) \leq \varep$.
\end{lem}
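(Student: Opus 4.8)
The plan is to prove the two implications separately; the forward one is formal and the converse carries the content. For the forward implication I would proceed as follows: given that $\Age_n(E)$ has the continuous ARP, fix $X,Y\in\Age_n(E)$, $\varep>0$ and a $\la_X$-continuous colouring $c$ of $\Emb(X,E)$, and choose $Z\in\Age_n(E)$ with $Z\rightarrow_\varep(Y)^X$. By the definition of $\Age_n(E)$, $Z$ is a finite-dimensional linear subspace of $E$ carrying the first $n$ seminorms of $E$, so the inclusion $\iota\colon Z\hookrightarrow E$ is a multi-isometric embedding and $c'(\psi):=c(\iota\circ\psi)$ is a $\la_X$-continuous colouring of $\Emb(X,Z)$. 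Applying $Z\rightarrow_\varep(Y)^X$ to $c'$ yields $\gamma'\in\Emb(Y,Z)$ with $\osc(c'\restriction\gamma'\circ\Emb(X,Y))\le\varep$, and then $\gamma:=\iota\circ\gamma'\in\Emb(Y,E)$ satisfies $\osc(c\restriction\gamma\circ\Emb(X,Y))=\osc(c'\restriction\gamma'\circ\Emb(X,Y))\le\varep$. This uses nothing beyond the fact that members of $\Age_n(E)$ are honest subspaces of $E$.

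For the converse I would argue by contraposition. Suppose $X,Y\in\Age_n(E)$ and $\varep_0>0$ witness the failure of the continuous ARP, i.e.\ $Z\not\rightarrow_{\varep_0}(Y)^X$ for every $Z\in\Age_n(E)$; the goal is a $\la_X$-continuous colouring $c$ of $\Emb(X,E)$ with $\osc(c\restriction\gamma\circ\Emb(X,Y))\ge\varep_0$ for every $\gamma\in\Emb(Y,E)$, which contradicts the displayed property with $\varep:=\varep_0/2$. First I would use separability of $E$ — hence of $\Emb(X,E)$ and $\Emb(Y,E)$, which embed into finite powers of $E$ — to fix countable dense sets $\{\varphi_i\}_i\subseteq\Emb(X,E)$, $\{\gamma_i\}_i\subseteq\Emb(Y,E)$ and set $E_k\in\Age_n(E)$ to be the linear span of $X\cup Y\cup\bigcup_{i\le k}\big(\varphi_i(X)\cup\gamma_i(Y)\big)$ with the first $n$ seminorms of $E$. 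Then $(E_k)_k$ is increasing, $\varphi_i\in\Emb(X,E_k)$ and $\gamma_i\in\Emb(Y,E_k)$ for $k\ge i$, and so $\bigcup_k\Emb(X,E_k)$, $\bigcup_k\Emb(Y,E_k)$ are dense in $\Emb(X,E)$, $\Emb(Y,E)$ for the pseudometric $d:=\max_{m<n}\|\cdot\|_m$. Since no $E_k$ satisfies $\rightarrow_{\varep_0}(Y)^X$, for each $k$ I would fix a $\la_X$-continuous $c_k\colon\Emb(X,E_k)\to[0,1]$ with $\osc(c_k\restriction\beta\circ\Emb(X,Y))>\varep_0$ for all $\beta\in\Emb(Y,E_k)$, let $\widehat{c_k}\colon\Emb(X,E)\to[0,1]$ be the $d$-McShane $1$-Lipschitz extension of $c_k$ truncated into $[0,1]$ (with $\Emb(X,E_k)$ sitting inside $\Emb(X,E)$ by postcomposition with $E_k\hookrightarrow E$; as $c_k$ is itself $1$-Lipschitz, $\widehat{c_k}$ really extends it and is again $\la_X$-continuous), fix a non-principal ultrafilter $\U$ on $\omega$, and define $c(\varphi):=\lim_{k\to\U}\widehat{c_k}(\varphi)$, a well-defined $\la_X$-continuous colouring of $\Emb(X,E)$.

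The last step is to check $\osc(c\restriction\gamma\circ\Emb(X,Y))\ge\varep_0$ for all $\gamma$. Since $d(\gamma\circ\eta,\gamma'\circ\eta)\le d(\gamma,\gamma')$ for $\eta\in\Emb(X,Y)$ and $c$ is $1$-Lipschitz, it suffices to treat $\gamma=\gamma_i$. For $k\ge i$ one has $\gamma_i\circ\Emb(X,Y)\subseteq\Emb(X,E_k)$, so $\widehat{c_k}=c_k$ there and $\osc(\widehat{c_k}\restriction\gamma_i\circ\Emb(X,Y))>\varep_0$; pick $\eta_k,\eta_k'\in\Emb(X,Y)$ with $|\widehat{c_k}(\gamma_i\circ\eta_k)-\widehat{c_k}(\gamma_i\circ\eta_k')|>\varep_0$. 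Writing $W_X:=\bigcap_{m<n}\ker\|\cdot\|_{X,m}$ and $W_Y:=\bigcap_{m<n}\ker\|\cdot\|_{Y,m}$, the maps $X/W_X\to Y/W_Y$ induced by $\Emb(X,Y)$ are isometries for each of the finitely many quotient seminorms, hence lie in a relatively compact subset of $\mathcal{L}(X/W_X,Y/W_Y)$; taking $\U$-limits of $\eta_k,\eta_k'$ therefore gives $d$-limits $\psi,\psi'$ of $(\gamma_i\circ\eta_k)_k,(\gamma_i\circ\eta_k')_k$ in the $d$-closure of $\gamma_i\circ\Emb(X,Y)$. Because every $\widehat{c_l}$ is $1$-Lipschitz, $d$-convergence forces $\lim_{k\to\U}\widehat{c_k}(\gamma_i\circ\eta_k)=\lim_{k\to\U}\widehat{c_k}(\psi)=c(\psi)$ and likewise for $\psi'$, so $|c(\psi)-c(\psi')|=\lim_{k\to\U}|\widehat{c_k}(\gamma_i\circ\eta_k)-\widehat{c_k}(\gamma_i\circ\eta_k')|\ge\varep_0$, and $d$-continuity of $c$ gives $\osc(c\restriction\gamma_i\circ\Emb(X,Y))\ge\varep_0$.

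The hard part, as indicated, is that last estimate: a pointwise ultralimit of colourings each of oscillation $>\varep_0$ on a prescribed set may have oscillation near $0$ there, since the realising pairs of embeddings can drift with $k$, so one cannot interchange $\lim_\U$ with the supremum defining $\osc$. The fix is to keep the colouring index tied to the index of the witnessing pair $(\eta_k,\eta_k')$ and to use precompactness of $\Emb(X,Y)$ modulo the degenerate directions $W_X,W_Y$, so that these pairs $\U$-converge and the $1$-Lipschitz bound lets $\lim_\U\widehat{c_k}(\gamma_i\circ\eta_k)$ be evaluated as $c$ at the limit point. The remaining ingredients — building the ``rich'' exhaustion $(E_k)$ so that the relevant embedding spaces are approximated, and the elementary fact that $d$ is non-expansive under composition — are routine given separability.
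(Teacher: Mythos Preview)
Your forward direction is exactly the paper's (which it dismisses as straightforward). Your converse is correct in outline, but it is organised differently from the paper's argument, and it adds a hypothesis.

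The paper proceeds as follows. It first fixes a \emph{finite} $\varep/3$-dense set $H=\{\eta_1,\dots,\eta_k\}\subseteq\Emb(X,Y)$ and observes that $Z\to_\varep(Y)^X$ follows once every $\la_X$-continuous colouring of $\Emb(X,Z)$ has oscillation $\le\varep/3$ on some $\gamma\circ H$. So the bad colourings $c_Z$ can be taken to satisfy $\osc(c_Z\restriction\gamma\circ H)>\varep/3$ for all $\gamma$. It then takes an ultrafilter $\U$ on the directed set $\Age_n(E)$ containing all cones $\{W:V\subseteq W\}$, and sets $c(\gamma):=\lim_\U c_Z(\gamma)$; no McShane extension is needed, since $\{W:\gamma(X)\subseteq W\}\in\U$ makes $c_Z(\gamma)$ eventually defined. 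For any $\rho\in\Emb(Y,E)$, on the $\U$-large set $\{W:\rho(Y)\subseteq W\}$ the badness of $c_W$ gives some pair $(i,j)$ with $|c_W(\rho\circ\eta_i)-c_W(\rho\circ\eta_j)|>\varep/3$; since there are only finitely many pairs, the ultrafilter selects a single pair that works $\U$-often, whence $|c(\rho\circ\eta_i)-c(\rho\circ\eta_j)|\ge\varep/3$. Thus the ``drifting witnesses'' problem you isolate is solved by pigeonhole on the finite set $H$, not by a compactness/diagonal argument.

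Compared with this, your route (i) assumes separability of $E$, which the lemma does not; the paper's ultrafilter on $\Age_n(E)$ avoids this entirely; (ii) introduces McShane extensions $\widehat{c_k}$, which the paper bypasses by letting the ultrafilter live on the subspaces themselves; and (iii) handles the oscillation lower bound by extracting $\U$-limits of the witnessing pairs $(\eta_k,\eta_k')$ via precompactness of $\Emb(X,Y)$ modulo $W_X,W_Y$. That last step is correct, but note that your limit point $\psi$ need not lie in $\Emb(X,E)$ (injectivity may fail), so strictly speaking $\widehat{c_k}(\psi)$ is undefined unless you extend $\widehat{c_k}$ to all seminorm-preserving linear maps $X\to E$; alternatively, one argues directly that along $\U$ the sequence $(\eta_k)$ is $d$-Cauchy and compares $\widehat{c_k}(\gamma_i\circ\eta_k)$ with $c(\gamma_i\circ\eta_{l})$ for a fixed $l$ in a $\U$-large $\delta$-ball. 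Both fixes are easy, but the paper's finite-$H$ trick makes them unnecessary and yields a shorter proof that also covers non-separable $E$.
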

\begin{proof}
The left-to-right direction is straightforward and will not be used in what follows, so we will only show the right-to-left direction. We will prove the contrapositive. First, let $H = \{\eta_1, \dots, \eta_k\}$ be a finite $\varep/3$-dense subset of $\Emb(X,Y)$, where the latter set is equipped with the pseudometric induced by $\max_{m < \la_X} \|\cdot\|_m$. We will need the following claim, the proof of which is routine.

\begin{claim}
Suppose there is $Z \in \Age_n(E)$ such that for every $\la_X$-continuous colouring of $\Emb(X,Z)$, there is $\gamma \in \Emb(Y,Z)$ such that $\osc(c \restriction \gamma \circ H) \leq \varep/3$. Then $Z \rightarrow_\varep (Y)^X$.
\end{claim}

Now, if $\Age_n(E)$ does not have the continuous ARP, then there are $X,Y \in \Age_n(E)$ and $\varep > 0$ such that no $Z \in \Age_n(E)$ witnesses $Z \rightarrow_\varep (Y)^X$. Thus, by the Claim, for each such $Z$ we can fix a bad $\la_X$-continuous colouring $c_Z$ such that $\osc(c_Z \restriction \gamma \circ H) \geq \varep$ for any choice of $\gamma \in \Emb(Y,Z)$. Fix an ultrafilter $\U$ on $\Age_n(E)$ such that $$\{W \in \Age_n(E) : V \subseteq W\} \in \U \text{ for each $V \in \Age_n(E)$}.$$ Define a mapping $c : \Emb(X, E) \rightarrow [0,1]$ by setting $c(\gamma) = \lim_\U c_Z(\gamma)$. Note that the ultralimit exists (by boundedness) and is well-defined since $\{W \in \Age_n(E) : \gamma(X) \subseteq W\} \in \U$. Furthermore, $c$ is an $\la_X$-continuous colouring. We claim that $c$ is a bad colouring of $\Emb(X,E)$. To this end, take any $\rho \in \Emb(Y,E)$ and note that $\{W \in \Age_n(E) : \rho(Y) \subseteq W\} \in \U$. Furthermore, for any such $W$ we have $\rho \in \Emb(Y,W)$ and so by our initial hypothesis we know $$| c_W(\rho \circ \eta_i) - c_W(\rho \circ \eta_j)| > \varep \text{ for some $i, j \in \{1, \dots, k\}$.}$$ Since $\U$ is an ultrafilter, it follows that there are $i, j \in \{1,\dots,k\}$ such that $$\{W \in \Age_n(E) : | c_W(\rho \circ \eta_i) - c_W(\rho \circ \eta_j)| > \varep\} \in \U.$$ It then follows that $|c(\rho \circ \eta_i) - c(\rho \circ \eta_j)| > \varep$. Since $\rho$ was arbitrary, we see that $c$ is a bad colouring of $\Emb(X,E)$.
\end{proof}

The following is the KPT correspondence for multi-seminormed spaces (cf. \cite{KPT, MT}).

\begin{thm}[Kechris-Pestov-Todor\v{c}evi\'c correspondence]\label{KPT}
Suppose $E$ is an infinite-dimensional multi-seminormed space which is approximately ultrahomogeneous. The following are equivalent:
	\begin{enumerate}[(i)]
	\item $\Age_n(E)$ has the ARP for each $n \in \N$ such that $1\leq n \leq \lambda_E$.
	\item $\Iso(E)$ is extremely amenable when endowed with the topology of pointwise convergence.
	\end{enumerate}
\end{thm}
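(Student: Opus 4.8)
The plan is to run the usual ``oscillation-stability'' argument, adapted to multi-seminormed spaces. Throughout write $G:=\Iso(E)$ with the topology of pointwise convergence and its left uniform structure, and recall that a basic neighbourhood of the identity in $G$ has the form $U_{Y,n,\de}:=\{h\in G:\max_{m<n}\|(h-\id)\restriction Y\|_m<\de\}$ with $Y\subseteq E$ finite-dimensional, $n\le\lambda_E$ and $\de>0$; since every $h\in G$ is a multi-isometry, $g^{-1}g'\in U_{Y,n,\de}$ amounts (up to the usual operator-norm/pointwise slack) to $\max_{m<n}\|g(y)-g'(y)\|_m$ being small on the relevant unit balls of $Y$. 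Two inputs will be used repeatedly: (a) the well-known reformulation of extreme amenability, that a topological group $G$ is extremely amenable if and only if for every bounded left-uniformly continuous $f:G\to[0,1]$, every finite $F\subseteq G$ and every $\varep>0$ there is $g\in G$ with $\osc(f\restriction gF)\le\varep$ (see \cite{KPT, MT} and Pestov's book); (b) the reduction proved just above, that $\Age_n(E)$ has the ARP (equivalently the discrete or continuous ARP) precisely when for all $X,Y\in\Age_n(E)$, $\varep>0$ and every $n$-continuous colouring $c$ of $\Emb(X,E)$ there is $\gamma\in\Emb(Y,E)$ with $\osc(c\restriction\gamma\circ\Emb(X,Y))\le\varep$. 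I will also use that, by approximate ultrahomogeneity, for every $X\in\Age_n(E)$ the $G$-orbit $\{g\circ\iota_X:g\in G\}$ of the inclusion $\iota_X:X\hookrightarrow E$ is dense in $\Emb(X,E)$ for the pseudometric $\max_{m<n}\|\cdot\|_m$, and that the members of $\Age_n(E)$ are \emph{literally} finite-dimensional subspaces of $E$.

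For (ii) $\Rightarrow$ (i): fix $n$ with $1\le n\le\lambda_E$, spaces $X,Y\in\Age_n(E)$, $\varep>0$ and an $n$-continuous colouring $c$ of $\Emb(X,E)$; by (b) it is enough to produce $\gamma\in\Emb(Y,E)$ with small oscillation. I would set $f(g):=c(g\circ\iota_X)$, which is a bounded left-uniformly continuous function on $G$ since $c$ is $1$-Lipschitz for $\max_{m<n}\|\cdot\|_m$ and each $g$ is a multi-isometry. Pick a finite $(\varep/3)$-dense set $\{\eta_1,\dots,\eta_k\}\subseteq\Emb(X,Y)$, use approximate ultrahomogeneity on the pairs $\iota_X,\ \iota_Y\circ\eta_i\in\Emb(X,E)$ to get $h_i\in G$ with $\max_{m<n}\|h_i\circ\iota_X-\iota_Y\circ\eta_i\|_m\le\varep/3$, and then apply the oscillation-stability characterization to $f$, $F=\{h_1,\dots,h_k\}$ and $\varep/3$ to obtain $g\in G$ with $\osc(f\restriction gF)\le\varep/3$. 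Since $g$ is a multi-isometry and $c$ is $1$-Lipschitz, the points $c(g\circ\iota_Y\circ\eta_i)$ all lie within $\varep$ of one another, so $\gamma:=g\circ\iota_Y$ has $\osc(c\restriction\gamma\circ\{\eta_i\})\le\varep$, and $(\varep/3)$-density of the $\eta_i$ upgrades this to control over all of $\Emb(X,Y)$ (after rescaling $\varep$ at the outset). Hence $\Age_n(E)$ has the ARP for every admissible $n$.

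For (i) $\Rightarrow$ (ii): given a bounded $f:G\to[0,1]$ that is left-uniformly continuous, a finite $F=\{g_1,\dots,g_k\}\subseteq G$ and $\varep>0$, I would first use left-uniform continuity to choose $X_0\subseteq E$ finite-dimensional, $n\le\lambda_E$ and $\de>0$ so that $g^{-1}g'\in U_{X_0,n,\de}$ forces $|f(g)-f(g')|<\varep/10$, and enlarge $X_0$ (equipped with the first $n$ seminorms of $E$) so that both $X_0$ and $Y_0:=\mathrm{span}\big(X_0\cup\bigcup_i g_i(X_0)\big)$ lie in $\Age_n(E)$; put $\eta_i:=g_i\restriction X_0\in\Emb(X_0,Y_0)$. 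Using density of the orbit of $\iota_{X_0}$ in $\Emb(X_0,E)$, I would transport $f$ to a bounded function on $\Emb(X_0,E)$, extend it by uniform continuity, and then apply an inf-convolution (McShane-type Lipschitz regularization, plus a harmless rescaling) to obtain an $n$-continuous colouring $c$ of $\Emb(X_0,E)$ with $|f(g)-c(g\circ\iota_{X_0})|\le\varep/4$ for all $g\in G$. Next apply the ARP of $\Age_n(E)$ to $X_0,Y_0$ and a small enough tolerance to get a finite-dimensional $Z\subseteq E$ with $Z\rightarrow_{\varep/4}(Y_0)^{X_0}$, hence a $\rho\in\Emb(Y_0,Z)$ on which $c\restriction\rho\circ\Emb(X_0,Y_0)$ has oscillation $\le\varep/4$; in particular the values $c(\rho\circ\eta_i)$ are within $\varep/4$ of each other. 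Finally apply approximate ultrahomogeneity to $\rho$ and the inclusion $Y_0\hookrightarrow E$ to find $g\in G$ with $g\restriction Y_0$ as close to $\rho$ as we like in the first $n$ seminorms; since $g_i(X_0)\subseteq Y_0$, the embeddings $g\circ\eta_i=(gg_i)\restriction X_0$ are then within $\varep/4$ of $\rho\circ\eta_i$, so the $c((gg_i)\restriction X_0)$, and therefore (by the factoring bound) the $f(gg_i)$, lie within $\varep$ of each other. Thus $\osc(f\restriction gF)\le\varep$ and $G$ is extremely amenable.

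The main obstacle is the passage in (i) $\Rightarrow$ (ii) between a left-uniformly continuous function on $\Iso(E)$ and a genuine $n$-continuous colouring of an embedding space. This is exactly where approximate ultrahomogeneity is essential, in two ways: it makes the orbit of $\iota_{X_0}$ dense, so that $f$ legitimately ``factors'' (up to $\varep$) through $g\mapsto g\circ\iota_{X_0}$ and can be regarded as living on $\Emb(X_0,E)$; and it is the tool that amalgamates the space $Z$ produced by the ARP (which is already a subspace of $E$) back into the group, i.e. lets us realize $\rho$ as the restriction of an honest multi-isometry of $E$. The remaining difficulties are replacing the mere uniform-continuity modulus of $f$ by the exact Lipschitz condition required of an $n$-continuous colouring (handled by inf-convolution), keeping careful track that only finitely many seminorms are ever involved (so that $\Age_n(E)$ with finite $n$ suffices, matching the hypothesis $1\le n\le\lambda_E$), and the routine $\varep$-bookkeeping; the direction (ii) $\Rightarrow$ (i) is comparatively soft, using only density of orbits and the characterization of extreme amenability.
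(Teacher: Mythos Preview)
Your argument is correct and follows essentially the same outline as the paper's proof. The direction (ii)$\Rightarrow$(i) is identical: define $f(g)=c(g\restriction X)$, pick a finite net in $\Emb(X,Y)$, realize it by global isometries via approximate ultrahomogeneity, and apply the oscillation-stability characterization of extreme amenability.

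For (i)$\Rightarrow$(ii) there is a small but genuine difference in packaging. The paper works directly with a flow $\Iso(E)\acts K$: it fixes a finite cover $\{V[x_i]\}$ of the compact $K$, chooses for each $\gamma\in\Emb(X,Z)$ a global isometry $g_\gamma$ with $g_\gamma\restriction X$ close to $\gamma$, and defines a \emph{discrete} colouring $c(\gamma)=i$ according to which $V[x_i]$ contains $g_\gamma^{-1}\cdot p$. Your route instead starts from a bounded left-uniformly continuous $f:G\to[0,1]$, pushes it down (up to a controlled error) to the orbit of $\iota_{X_0}$ in $\Emb(X_0,E)$, and then manufactures a genuine $n$-continuous colouring by Lipschitz regularization/rescaling. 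Both are standard variants of the KPT machinery; the paper's discrete-cover approach avoids the McShane/inf-convolution step entirely, while your approach has the virtue of using a single characterization of extreme amenability in both directions. Your remark that the regularization needs a ``harmless rescaling'' is the one place to be careful: the modulus coming from left-uniform continuity need not be $1$-Lipschitz, so one really does need either to rescale the tolerance in the continuous ARP by the Lipschitz constant, or (equivalently) to fall back on the discrete ARP, exactly as you indicate.
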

\begin{proof}
$(i) \rightarrow (ii)$: Fix an $\Iso(E)$-flow $\Iso(E) \acts K$ for $K$ compact, $\varep > 0, p \in K$, an entourage $U$ and a finite $F \subseteq \Iso(E)$. By one of the well-known characterizations of extreme amenability (see \cite{Pestov} or \cite[Claim 5.11.2]{FLMT}) it is enough to find $g \in \Iso(E)$ such that $F \cdot (g \cdot p)$ is $U$-small, i.e. such that $$(f_0 \cdot (g \cdot p), f_1 \cdot (g \cdot p)) \in U \text{ for each $f_0, f_1 \in F$}.$$ Before proceeding, we will define a directed family of pseudometrics which generate the left uniformity of $\Iso(E)$: For each $n \in \N$ such that $n \leq \lambda_E$ and each finite-dimensional $X \subseteq E$, define a pseudometric $d_X^n$ on $\Iso(E)$ by $$d_X^n(g,h) = \max_{m<  n} \|g\restriction X -h\restriction X\|_m.$$ We can assume without loss of generality that all entourages are symmetric. Fix an entourage $V$ such that $V \circ V \circ V \circ V \subseteq U$. Since the mapping $\Iso(E) \rightarrow K : g \rightarrow g^{-1} \cdot p$ is left uniformly continuous (see, e.g., \cite[Lemma 2.1.5]{Pestov}), there are $n$, $X \subseteq E$ and $\delta > 0$ such that $$d_X^n(g,h) \leq \delta \text { implies } (g^{-1} \cdot p, h^{-1} \cdot p) \in V.$$

Equip $X$ with the first $n$ seminorms induced from $E$, so that $X \in \Age_n(E)$. Let $Y$ be any member of $\Age_n(E)$ containing $\bigcup \{g(X) : g^{-1} \in F \cup \{\id\} \}$. Fix a finite set $\{x_i\}_{i<r} \subseteq K$ such that $K \subseteq \bigcup_{i<r} V[x_i]$, where $V[x] = \{y \in K : (x,y) \in V\}$. Apply the approximate Ramsey property of $\Age(E)$ to the parameters $n, X,Y, \delta/3$ and $r$ to obtain $Z \in \Age_n(E)$ such that $$Z \rightarrow_{\delta/3, n} (Y)^X_r.$$ Define a colouring $c : \Emb(X,Z) \rightarrow r$ by first choosing, for each $\gamma \in \Emb(X,Z)$, some $g_\gamma \in \Iso(E)$ such that $\max_{m < n} \|g_\gamma \restriction X - \gamma\|_m \leq \delta/3$; such a choice is possible by the approximate ultrahomogeneity of $E$. Then define $c(\gamma) = i$ if $i < r$ is the least index such that $g_\gamma^{-1} \cdot p \in V[x_i]$. By definition of $Z$ there are $\rho \in \Emb(Y,Z)$ and $i < r$ such that $$\rho \circ \Emb(X,Y) \subseteq (c^{-1}\{i\})_{\delta/3, n}.$$ In particular, this implies that for each $\eta \in \Emb(X,Y)$ there is $h_\eta \in \Iso(E)$ such that $\max_{m < n} \|\rho \circ \eta - h_\eta\restriction X\|_m \leq 2\delta/3$ and $h_\eta^{-1} \cdot p \in V[x_i]$. Choose $g \in \Iso(E)$ such that $\max_{m < n} \|g \restriction Y - \rho\|_m \leq \delta/3$. Now, given $f_0, f_1 \in F$, let $\eta_j := f_j^{-1} \restriction X$ for $j = 0, 1$ and note $\eta_j \in \Emb(X,Y)$. Then for each $j = 0, 1$,
\begin{equation*}
\begin{split}
d_X^n(g\circ f_j^{-1}, h_{\eta_j}) & = \max_{m < n} \|g \circ \eta_j - h_{\eta_j}\restriction X\|_m \\
& \leq \max_{m < n} \|g \circ \eta_j - \rho \circ \eta_j\|_m + \max_{m < n} \|\rho \circ \eta_j - h_{\eta_j} \restriction X\|_m \\
& \leq \delta/3 + 2\delta/3 = \delta.
\end{split}
\end{equation*}
By choice of $\delta$, this implies $(f_j \circ g^{-1} \cdot p, h_{\eta_j}^{-1} \cdot p) \in V$ for each $j$. Since $(x_i, h_{\eta_j}^{-1} \cdot p) \in V$ for each $j$, our choice of $V$ implies $(f_0 \circ g^{-1} \cdot p, f_1 \circ g^{-1} \cdot p) \in U$. Thus $F \cdot (g^{-1} \cdot p)$ is $U$-small.

\medskip \noindent $(ii) \rightarrow (i)$: To prove the ARP, we use the previous two lemmata together with the following characterization of extreme amenability in terms of the family of pseudometrics defined above. (See \cite{Pestov} or \cite[Proposition 3.9]{MT}.)
\begin{enumerate}
	\item[$(\ast)$] $\Iso(E)$ is extremely amenable if, and only if, for every finite $F \subseteq \Iso(E), \varep > 0, X \subseteq E, n \in [1,\lambda_E] \cap \N$ and $1$-Lipschitz map $f : (\Iso(E), d_X^n) \rightarrow [0,1]$, there is $g \in \Iso(E)$ such that $\osc(f \restriction g \circ F) \leq \varep$.
\end{enumerate}
 Fix $X, Y \in \Age_n(E)$ together with $\varep > 0$ and an $n$-continuous colouring $c$ of $\Emb(X,E)$. Let $H = \{\eta_1, \dots, \eta_k\}$ be a finite $\varep$-dense subset of $\Emb(X,Y)$ where the latter set is equipped with the pseudometric induced by $\max_{m < n} \|\cdot\|_m$. Apply the approximate ultrahomogeneity of $E$ to find $F = \{g_1, \dots, g_k\} \subseteq \Iso(E)$ such that $\|g_i \restriction X - \eta_i\|_m \leq \varep$ for all $m < n$ and all $i \leq k$. Let $\widetilde{c} : (\Iso(E), d_n^X) \rightarrow [0,1]$ be the 1-Lipschitz mapping defined by $\widetilde{c}(g) = c(g\restriction X)$ and use $(\ast)$ to find $g \in \Iso(E)$ such that $\osc(\widetilde{c} \restriction g \cdot F) \leq \varep$. Then for any $i, j \leq k$, by the triangle inequality the term $|c(g \circ \eta_i) - c(g \circ \eta_j)|$ is bounded above by $$|c(g \circ \eta_i) - c(g \circ g_i\restriction X)| + |c(g \circ g_i \restriction X) - c(g \circ g_j \restriction X)| + |c(g \circ g_j \restriction X) - c(g \circ \eta_j)|.$$ Since $c$ is an $n$-continuous colouring, the first term is bounded above by $$\max_{m < n} \|g \circ \eta_i - g \circ g_i\restriction X\|_m = \max_{m < n} \|\eta_i - g_i\restriction X\|_m \leq \varep$$ by our choice of $g_i$. Similarly, the third term is bounded above by $\varep$. To bound the second term, note that $$|c(g \circ g_i \restriction X) - c(g \circ g_j \restriction X)| = |\widetilde{c}(g \circ g_i) - \widetilde{c}(g \circ g_j)| \leq \varep$$ by our choice of $g$. Thus, if we let $\gamma = g \restriction Y$, we see that the oscillation of $c$ on $\gamma \circ H$ is bounded by $3\varep$. Then, using the definition of the $\eta_i$, it follows that $\osc(\gamma \circ \Emb(X,Y)) \leq 5\varep$.
\end{proof}

Our next goal is to show that various classes of finite-dimensional multi-seminormed spaces have the ARP, which will allow us to apply the KPT correspondence in certain instances to obtain examples of extremely amenable multi-isometry groups. As in the case of the amalgamation property, our main source of examples of such classes will come from classes of finite-dimensional normed spaces which are known to have the ARP in the context of normed spaces. The key lemma is the following:

\begin{lem}\label{ARP1}
Suppose $\K_1, \dots, \K_n$ are classes of finite-dimensional normed spaces with the ARP. Let $X_1,\dots, X_n, Y_1,\dots, Y_n$ be finite-dimensional seminormed spaces such that $(X_i)_{\|\cdot\|}$ and $(Y_i)_{\|\cdot\|}$ belong to $\K_i$ for each $i \leq n$. Let $\varep>0$ and $r < \omega$. Then there are finite-dimensional seminormed spaces  $Z_1,\dots, Z_n$ such $(Z_i)_{\|\cdot\|} \in \K_i$ for each $i$ and, for every colouring $c: \prod_{j=1}^n \Emb(X_j, Z_j)\to r$, there are $\rho_j\in \Emb(Y_j,Z_j)$, $j=1,\dots,n$, such that $$\prod_{j=1}^n \rho_j \circ \Emb(X_j,Y_j) \text{ is $\varep$-monochromatic}.$$    
\end{lem}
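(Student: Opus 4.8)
The plan is to induct on $n$, reducing the whole argument to the (ordinary) approximate Ramsey property of normed spaces by factoring out the kernels of the seminorms. The basic observation is this: for a finite-dimensional seminormed space $X$ write $\bar X := X_{\|\cdot\|}$ for its normed quotient; then for seminormed $X,Z$ the operator seminorm on $\Emb(X,Z)$, namely $\|\varphi\| = \sup\{\|\varphi(x)\| : \|x\|\le 1\}$, depends only on the induced isometric embedding $\bar\varphi:\bar X\to\bar Z$, since any $x$ with $\|x\|\le 1$ differs from a fixed lift of $\bar x\in\bar X$ by an element of $\ker\|\cdot\|_X$, whose image under $\varphi$ lies in $\ker\|\cdot\|_Z$. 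Hence the pseudometric space underlying $\Emb(X,Z)$ is isometric to $\Emb(\bar X,\bar Z)$ with its usual operator metric, which is \emph{compact}. Moreover, after choosing linear complements of the kernels and a fixed injective linear map between them --- possible whenever $\dim\ker\|\cdot\|_Z\ge\dim\ker\|\cdot\|_X$, which we shall always arrange --- one obtains a section $s:\Emb(\bar X,\bar Z)\to\Emb(X,Z)$ of the induced-map assignment; this transports $r$-colourings of $\Emb(X,Z)$ to colourings of $\Emb(\bar X,\bar Z)$ and back, with ``$\varep$-monochromatic'' for $\Emb(X,Z)$ matching ``$\varep$-monochromatic'' for the corresponding set of isometric embeddings of normed spaces.

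For $n=1$: given $X_1,Y_1,\varep,r$ with $\bar X_1,\bar Y_1\in\K_1$, apply the ARP of $\K_1$ to get $\bar Z_1\in\K_1$ with $\bar Z_1\rightarrow_\varep(\bar Y_1)^{\bar X_1}_r$, and put $Z_1:=\bar Z_1\oplus N_0$ where $N_0$ carries the zero seminorm and $\dim N_0$ is large, so that $(Z_1)_{\|\cdot\|}=\bar Z_1\in\K_1$ and $\ker\|\cdot\|_{Z_1}=N_0$ is as big as needed. Given an $r$-colouring $c$ of $\Emb(X_1,Z_1)$, the colouring $c\circ s$ of $\Emb(\bar X_1,\bar Z_1)$ is $\varep$-stabilized on $\bar\rho\circ\Emb(\bar X_1,\bar Y_1)$ for some $\bar\rho$; lifting $\bar\rho$ to $\rho\in\Emb(Y_1,Z_1)$ via a section and using $\overline{\rho\circ\eta}=\bar\rho\circ\bar\eta$ together with the operator-seminorm observation, one checks that $\rho\circ\Emb(X_1,Y_1)$ is $\varep$-monochromatic for $c$. (If $\Emb(X_1,Y_1)=\varnothing$ there is nothing to prove, a vacuous case that recurs throughout and will not be mentioned again.)

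The inductive step $n-1\Rightarrow n$ is the classical finite product Ramsey argument --- colour the last coordinate by the colouring it induces on the others --- made metric via nets. First apply the inductive hypothesis to $X_1,\dots,X_{n-1}$ with $r$ colours and precision $\varep/3$ to get $Z_1,\dots,Z_{n-1}$; fix a finite $\varep/3$-dense subset $\mathcal N\subseteq\prod_{j<n}\Emb(X_j,Z_j)$ in the product pseudometric, which exists since $\prod_{j<n}\Emb(\bar X_j,\bar Z_j)$ is compact; then apply the case $n=1$ to $\K_n$ with precision $\varep/3$ and $r^{|\mathcal N|}$ colours to get $Z_n$. Now, given an $r$-colouring $c$ of $\prod_{j\le n}\Emb(X_j,Z_j)$, colour $\psi\in\Emb(X_n,Z_n)$ by the tuple $(c(\vec\phi,\psi))_{\vec\phi\in\mathcal N}\in r^{\mathcal N}$; the case $n=1$ gives $\rho_n\in\Emb(Y_n,Z_n)$ and $\tau\in r^{\mathcal N}$ such that every $\psi\in\rho_n\circ\Emb(X_n,Y_n)$ is within $\varep/3$ of some $\psi'$ with $c(\vec\phi,\psi')=\tau(\vec\phi)$ for all $\vec\phi\in\mathcal N$. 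Define an $r$-colouring $e$ of $\prod_{j<n}\Emb(X_j,Z_j)$ by $e(\vec\phi):=\tau(\vec\phi_0)$, where $\vec\phi_0\in\mathcal N$ is a fixed $\varep/3$-approximant of $\vec\phi$, and apply the Ramsey property of $Z_1,\dots,Z_{n-1}$ to obtain $\rho_1,\dots,\rho_{n-1}$ and $i^*<r$ with $\prod_{j<n}\rho_j\circ\Emb(X_j,Y_j)$ being $\varep/3$-monochromatic of colour $i^*$ for $e$. Finally, given any $(\vec\eta,\psi)\in\prod_{j\le n}\rho_j\circ\Emb(X_j,Y_j)$, approximate $\psi$ by $\psi'$ as above, approximate $\vec\eta$ by some $\vec\eta'$ with $e(\vec\eta')=i^*$, and let $\vec\phi_0\in\mathcal N$ be the approximant of $\vec\eta'$, so that $c(\vec\phi_0,\psi')=\tau(\vec\phi_0)=i^*$; the triangle inequality then shows $(\vec\eta,\psi)$ lies within $\varep$ of $(\vec\phi_0,\psi')$, witnessing $\varep$-monochromaticity of colour $i^*$.

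The only genuinely delicate point is the reduction in the first paragraph: one must check carefully that the operator seminorm on spaces of embeddings is blind to the kernels of the seminorms, that the induced-map assignment on these spaces admits sections once the target's kernel is enlarged, and that these constructions respect composition. Once this is in place the combinatorics is routine, the sole new ingredient over the classical product Ramsey induction being that the (a priori infinitely many) colourings of the lower coordinates are replaced by colourings of a finite net, which is legitimate precisely because $\prod_{j<n}\Emb(\bar X_j,\bar Z_j)$ is compact.
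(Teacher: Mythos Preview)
Your argument is correct and follows essentially the same route as the paper: the base case is handled by passing to the normed quotients $\bar X,\bar Y$ and applying the ARP of $\K_1$ there, and the inductive step is the standard product Ramsey trick of colouring one coordinate by the tuple of colours it induces on a finite net in the remaining coordinates. The paper isolates your base case as a separate lemma (proved with the seminormed space $Z\times Z$, $\|(z_1,z_2)\|:=\|z_1\|$, in place of your $\bar Z_1\oplus N_0$) and then carries out the induction exactly as you do, with $\varep/2$ in place of your $\varep/3$.

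One small point worth recording: your treatment of injectivity in the base case is actually cleaner than the paper's. You explicitly arrange $\dim\ker\|\cdot\|_{Z_1}$ large and build a section $s:\Emb(\bar X_1,\bar Z_1)\to\Emb(X_1,Z_1)$, so that the lifted $\rho$ is genuinely injective. The paper's choice $Z\times Z$ also has a large kernel, but the embedding it writes down, namely $\bar\rho\circ\pi_Y$, kills $\ker\|\cdot\|_Y$ and is not literally injective; the second factor of $Z\times Z$ is available to fix this, but the paper does not spell it out. Your framing via sections and the observation that the operator pseudometric on $\Emb(X,Z)$ depends only on the induced maps $\bar X\to\bar Z$ makes this step transparent.
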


The proof will involve a standard strategy for obtaining product Ramsey properties. First we will need:

\begin{lem}\label{basecase}
Suppose $\K$ is a class of finite-dimensional normed spaces with the ARP. For any finite-dimensional seminormed spaces $X$ and $Y$ such that $X_{\|\cdot\|}$ and $Y_{\|\cdot\|}$ belong to $\K$, any $\varep>0$ and $r\in \N$, there is a finite-dimensional seminormed space $Z$ such that $Z_{\|\cdot\|} \in \K$ and every colouring $c: \Emb(X, Z) \rightarrow r$ $\varep$-stabilizes on a set of the form $\rho \circ \Emb(X,Y)$ for $\rho \in \Emb(Y,Z)$.
\end{lem}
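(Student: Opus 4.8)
The plan is to reduce the statement to the ordinary ARP for the single normed-space class $\K$ by passing to the quotient normed spaces. Given finite-dimensional seminormed spaces $X$ and $Y$ with $X_{\|\cdot\|}, Y_{\|\cdot\|} \in \K$, I would first observe that the quotient maps $q_X : X \to X_{\|\cdot\|}$ and $q_Y : Y \to Y_{\|\cdot\|}$ are seminorm-preserving surjections, and that precomposition with $q$ gives a natural correspondence between embeddings. The key point is that for a \emph{normed} target $Z_0 \in \K$, any linear map $f : X \to Z_0$ with $\frac{1}{1+\varep}\|x\| \le \|f(x)\| \le (1+\varep)\|x\|$ automatically kills $\ker\|\cdot\|_X$ (since $\|f(x)\| = 0$ there), hence factors through a map $\bar f : X_{\|\cdot\|} \to Z_0$ in $\Emb_\varep(X_{\|\cdot\|}, Z_0)$; conversely every such $\bar f$ lifts to $\bar f \circ q_X \in \Emb_\varep(X, Z_0)$. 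Thus $\Emb(X, Z_0)$ and $\Emb(X_{\|\cdot\|}, Z_0)$ are in a bijective, isometry-compatible correspondence (for the operator seminorms defined in the discrete ARP), and likewise for $Y$; the composition structure $\gamma \mapsto \gamma \circ \Emb(X,Y)$ matches up on both sides because $q_Y$ intertwines the actions.

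Next I would apply the discrete ARP for the normed class $\K$ to the pair $X_{\|\cdot\|}, Y_{\|\cdot\|}$, the error $\varep$ and $r$ colours, producing a normed space $Z_0 \in \K$ with $Z_0 \to_\varep (Y_{\|\cdot\|})^{X_{\|\cdot\|}}_r$ — here I use the equivalence (standard, as in \cite{BLLM, FLMT}) between the discrete and continuous ARP for normed spaces, or simply take the discrete form as the definition. Let $Z := Z_0$ viewed as a (one-)seminormed space, so that trivially $Z_{\|\cdot\|} = Z_0 \in \K$. Given any colouring $c : \Emb(X,Z) \to r$, transport it along the bijection above to a colouring $\tilde c : \Emb(X_{\|\cdot\|}, Z_0) \to r$; apply the ARP of $\K$ to get $\bar\rho \in \Emb(Y_{\|\cdot\|}, Z_0)$ on which $\tilde c$ is $\varepsilon$-monochromatic in the sense of the $(c^{-1}\{i\})_\varep$ condition; then $\rho := \bar\rho \circ q_Y \in \Emb(Y, Z)$ does the job, because the correspondence preserves the operator-seminorm distances used to define $\varepsilon$-monochromaticity. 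One technical point to check is that $\rho \circ \Emb(X,Y)$ maps onto $\bar\rho \circ \Emb(X_{\|\cdot\|}, Y_{\|\cdot\|})$ under the identification, which follows since every $\eta \in \Emb(X,Y)$ factors as $\bar\eta \circ q_X$ with $\bar\eta \in \Emb(X_{\|\cdot\|}, Y_{\|\cdot\|})$, and $\rho \circ \eta = \bar\rho \circ q_Y \circ \eta = \bar\rho \circ \bar\eta \circ q_X$.

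I expect the only genuine subtlety to be bookkeeping around the operator seminorm $\|\gamma\|_m = \sup\{\|\gamma(x)\|_m : \|x\|_m = 1\}$: one must verify that $\|\bar\gamma \circ q_X\|$ as a map $X \to Z_0$ equals $\|\bar\gamma\|$ as a map $X_{\|\cdot\|} \to Z_0$, which is immediate from $\|q_X(x)\| = \|x\|$ and surjectivity of $q_X$, so that $\varepsilon$-closeness is faithfully transported in both directions. Everything else is routine. (If one prefers to avoid the discrete/continuous equivalence for normed spaces, the argument works verbatim with the continuous ARP throughout, since an $n$-continuous colouring of $\Emb(X,Z)$ with $\la_X = 1$ pulls back to a $1$-Lipschitz colouring of $\Emb(X_{\|\cdot\|}, Z_0)$ and oscillation is preserved.)
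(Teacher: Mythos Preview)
There is a genuine gap: you have overlooked the injectivity requirement built into the definition of $\Emb$. Recall that in this paper $\Emb(X,Z)$ consists of \emph{injective} seminorm-preserving linear maps. Your argument correctly observes that any seminorm-preserving $f:X\to Z_0$ into a \emph{normed} space $Z_0$ must kill $\ker\|\cdot\|_X$; but precisely for this reason, if $\ker\|\cdot\|_X\neq\{0\}$ then $f$ is \emph{not} injective, so $f\notin\Emb(X,Z_0)$. In other words, when $X$ is not separated your ``bijection'' between $\Emb(X,Z_0)$ and $\Emb(X_{\|\cdot\|},Z_0)$ fails: the left-hand side is empty while the right-hand side typically is not. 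The same problem hits $Y$: if $\ker\|\cdot\|_Y\neq\{0\}$ then $\Emb(Y,Z_0)=\varnothing$, so there is no $\rho$ at all and the conclusion of the lemma cannot even be stated. Since the whole point of the lemma is to handle genuinely seminormed (non-separated) $X$ and $Y$ --- this is exactly how it is applied in the subsequent product-Ramsey argument, where $(X,\|\cdot\|_j)$ arises from a single seminorm of a multi-seminormed space and will in general have a kernel --- the case you have swept under the rug is the only interesting one.

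The paper's proof avoids this by taking the witness to be a \emph{non-separated} seminormed space: one first finds a normed $Z\in\K$ with $Z\rightarrow_\varep(\widetilde Y)^{\widetilde X}_r$ and then works in $Z\times Z$ equipped with the seminorm $\|(z_1,z_2)\|:=\|z_1\|_Z$, whose kernel $\{0\}\times Z$ provides room for injective lifts of the quotient embeddings. Your reduction to $\K$ via the quotients is the right first move, but the target space must itself carry a nontrivial seminorm kernel; simply viewing $Z_0$ as a one-seminormed space does not supply this.
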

\begin{proof}
Let $X, Y$ be finite-dimensional seminormed spaces, $\varep > 0$ and $r \in \N$. Let $\widetilde{X} = X / \ker \| \cdot \|_X$ and $\widetilde{Y} = Y / \ker \|\cdot \|_Y$ be equipped with the norms $\|[x]\| = \|x\|_X$ and $\|[y]\| = \|y\|_Y$ respectively. By the ARP of $\K$, there is a finite-dimensional normed space $Z \in \K$ such that $$Z \rightarrow_\varep (\widetilde{Y})^{\widetilde{X}}_r.$$ Consider the product $Z \times Z$ equipped with the seminorm $\|(z_1,z_2)\| := \|z_1\|_Z$. We claim that this space witnesses the ARP for the parameters $X, Y, \varep, r$. Note that $Z \times Z \in \K_{\|\cdot\|}$ since the associated quotient is isomorphic to $Z$. Now, fix a colouring $c : \Emb(X,Z) \rightarrow r$ and define $\widetilde{c} : \Emb(\widetilde{X},Z) \rightarrow r$ by $\widetilde{c}(\gamma) = c(\gamma \circ \pi_X)$ where $\pi_X : X \rightarrow \widetilde{X}$ is the canonical surjection. By definition of $Z$, there is $\rho \in \Emb(\widetilde{Y}, Z)$ such that $$\rho \circ \Emb(\widetilde{X}, \widetilde{Y}) \subseteq (\widetilde{c}^{-1}\{i\})_\varep \, \text{ for some $i < r$}.$$ Let $\bar{\rho} : \widetilde{Y} \rightarrow Z \times Z$ be defined by setting $\bar{\rho}(y) = (\rho(y), 0)$. We will show $$(\bar{\rho} \circ \pi_Y) \circ \Emb(X,Y) \subseteq (c^{-1}\{i\})_\varep.$$ To this end, fix $\eta \in \Emb(X,Y)$ and define a mapping $\varphi : \widetilde{X} \rightarrow \widetilde{Y}$ by $\varphi([x]) = \pi_Y(\eta(x))$; it is easy to check that $\varphi$ is a well-defined multi-isometric embedding which satisfies $\varphi \circ \pi_X = \pi_Y \circ \eta$. Then by definition of $\rho$ there is $\theta \in \widetilde{c}^{-1}\{i\}$ such that $\| \rho \circ \varphi - \theta \| \leq \varep.$ Let $\bar{\theta} : \widetilde{X} \rightarrow Z \times Z$ be given by $\bar{\theta}(x) = (\theta(x), 0)$. The situation is summarized by the following diagram:
\begin{center}
\begin{tikzcd}[row sep=large, column sep=large]
X \arrow[r, "\pi_X"] \arrow[d, "\eta"'] & \widetilde{X} \arrow[d, "\varphi"] \arrow[rd, "\bar{\theta}"{name=M}] &   \\
Y \arrow[ur, phantom, "\scalebox{1.4}{$\circlearrowleft$}" description]\arrow[r, "\pi_Y"']                    & \widetilde{Y} \arrow[r, "\bar{\rho}"']    \arrow[ur, phantom, "\scalebox{0.8}{$\cca{\varep}$}" description, to=M]                   & Z \times Z
\end{tikzcd}
\end{center}
Then $c(\theta \circ \pi_X) = i$ and, since $$(\rho \circ \pi_Y) \circ \eta = \rho \circ (\pi_Y \circ \eta) = \rho \circ (\varphi \circ \pi_X),$$ we have $$\|(\rho \circ \pi_Y) \circ \eta - \theta \circ \pi_X \| = \|(\rho \circ \varphi) \circ \pi_X - \theta \circ \pi_X\| \leq \|\rho \circ \varphi - \theta \| \leq \varep.$$ Thus $\rho \circ \pi_Y$ is the desired embedding.
\end{proof}
  
\begin{proof}[Proof of Lemma \ref{ARP1}]
The proof is by induction on $n$. The case $n=1$ follows from the previous lemma, so fix all parameters and suppose that we can find seminormed spaces $Z_1,\dots,Z_n$ such that every $r$-colouring of $\prod_{j=1}^n \Emb(X_j, Z_j)$ has a $\varep/2$-monochromatic set of the form $\prod_{j=1}^n \rho_j\circ \Emb(X_j, Y_j)$. Let $D$ be a finite $\varep/2$-dense subset of  $\prod_{j=1}^n \Emb(X_j,Z_j)$, where the seminorm on the product is the maximum of the given seminorms.  Apply Lemma \ref{basecase} to $X_{n+1}$ and $Y_{n+1}$ to find a seminormed space $Z$ that works for the error $\varep/2$, and the number of colours being the cardinality of $^{D}{r}$. We claim that $Z_1,\dots,Z_n,Z$ works.  For suppose that $c:\prod_{j=1}^{n+1} \Emb(X_j,Z_j)\to r$. We have the induced colouring $\widehat c: \Emb(X_{n+1},Z)\to \, ^{D}r$,  $$\widehat c(\xi)(\eta_1,\dots,\eta_n):=c(\eta_1,\dots,\eta_n, \xi)\in r \text{ for every $(\eta_1,\dots,\eta_n)\in D$}.$$ By the choice of $Z$ there is some $\rho\in \Emb(Y_{n+1},Z)$ such that $\rho \circ \Emb(X_{n+1},Y_{n+1})$ is $\varep/2$-monochromatic for $\widehat c$ with colour $\theta\in {}^{D}r$. The mapping $\theta: D\to r$ defines an $r$-colouring $\widehat{\theta}:\prod_{j=1}^n \Emb(X_j, Z_j)\to r$ by $\varep/2$-proximity. For each $j=1,\dots, n$, let $\rho_j\in \Emb(Y_j, Z_j)$ be such that $\prod_{j=1}^n \rho_j \circ \Emb(X_j, Y_j)$ is $\varep/2$-monochromatic for $\widehat\theta$ with colour $s\in r$. Then the set $$\left(\prod_{j=1}^n \rho_j \circ \Emb(X_j, Y_j) \right)\times (\rho \circ \Emb(X_{n+1},Y_{n+1})) \subseteq \prod_{j=1}^{n+1} \Emb(X_j,Z_j)$$ is $\varep$-monochromatic  for $c$ with colour $s$:    Let $(\gamma_j)_{j=1}^{n+1}\in \prod_{j=1}^{n+1}\Emb(X_j, Y_j)$.  There is some $(\mu_{j})_{j=1}^n \in \prod_{j=1}^n \Emb(X_j, Z_j)$ such that $\widehat \theta((\mu_j)_{j=1}^n)=s$ and $(\mu_j)_{j=1}^n$ is $\varep/2$-close to $(\rho_j \circ \gamma_j)_{j=1}^n$. Choose $(\pi_j)_{j=1}^n\in D$ that is $\varep/2$-close to  $(\mu_j)_{j=1}^n$  such that $\theta((\pi_j)_{j=1}^n)=s$.   Let $\mu\in \Emb(X,Z)$ be such that $\mu$ is $\varep/2$-close to $\rho \circ \gamma$ and $\widehat c(\mu)=\theta$.  This last equality means by definition that $$s=\theta((\mu_j)_{j=1}^n)= \widehat c(\mu)((\mu_j)_{j=1}^n)=c(\mu_1,\dots,\mu_n,\mu).$$ Finally, observe that $\mu$ is $\varep/2$-close to $\rho$, and each $\mu_j$ is $\varep$-close to $\rho_j \circ \gamma_j$. 
\end{proof}

For the next proposition, recall the definition of the classes $\langle \bar{\K}\rangle_{=n}$ from Definition \ref{classes}.

\begin{prop}\label{ARP}
Let $\bar{\K} = \{\K_n\}_{n<\omega}$ be a collection of finite-dimensional normed spaces such that each $\K_n$ has the ARP. For every $n \in \N, X,Y\in \langle \bar{\K}\rangle_{=n}$, $r\in \N$, and every $\varep>0$ there is $Z\in \langle \bar{\K}\rangle$ such that every $r$-colouring of $\Emb(X,Z)$ has an $\varep$-monochromatic set of the form $\rho \circ \Emb(X,Y)$ for some $\rho\in \Emb(Y,Z)$. Furthermore, if $\K_n = \K$ for each $n$ and $\K$ is closed under $\ell_\infty$-sums, then $Z$ can be chosen to be graded when $X$ and $Y$ are graded.
\end{prop}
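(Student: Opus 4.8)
The plan is to derive this from the product Ramsey property of Lemma~\ref{ARP1} by decomposing multi-isometric embeddings into their coordinates relative to the individual seminorms. Three elementary observations drive the argument. (i) A multi-isometric embedding $f:X\to Z$ with $\la_X=n$ only constrains the first $n$ seminorms of $Z$, so it suffices to produce a $Z$ of length exactly $n$. (ii) For each $i<n$ the pairs $\mathbf{X}_i:=(X,\nrm{\cdot}_{X,i})$ and $\mathbf{Y}_i:=(Y,\nrm{\cdot}_{Y,i})$ are finite-dimensional seminormed spaces whose normed quotients are $(\mathbf{X}_i)_{\nrm{\cdot}}=X_{\nrm{\cdot}_i}\in\K_i$ and $(\mathbf{Y}_i)_{\nrm{\cdot}}=Y_{\nrm{\cdot}_i}\in\K_i$, since $X,Y\in\langle\bar{\K}\rangle_{=n}$. (iii) Because $X$ is separated, i.e. $\bigcap_{i<n}\ker\nrm{\cdot}_{X,i}=\{0\}$, any tuple $(f_i)_{i<n}$ of linear maps $f_i:X\to V_i$ with $\nrm{f_i(x)}_{V_i}=\nrm{x}_{X,i}$ for all $x$ assembles to a \emph{multi-isometric embedding} $x\mapsto(f_i(x))_{i<n}$ of $X$ into $\prod_{i<n}V_i$ with the coordinatewise seminorms --- injectivity is automatic, even though the individual $f_i$ need not be injective --- and likewise for $Y$. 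We may assume $\Emb(X,Y)\neq\varnothing$, as otherwise $Z:=Y$ with $\rho:=\id_Y$ works trivially.

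Given $X,Y\in\langle\bar{\K}\rangle_{=n}$, $r\in\N$ and $\varep>0$, I would apply Lemma~\ref{ARP1} to $\mathbf{X}_0,\dots,\mathbf{X}_{n-1},\mathbf{Y}_0,\dots,\mathbf{Y}_{n-1}$ with the classes $\K_0,\dots,\K_{n-1}$, error $\varep$ and $r$ colours, obtaining finite-dimensional seminormed spaces $Z_0,\dots,Z_{n-1}$ with $(Z_i)_{\nrm{\cdot}}\in\K_i$ and the product Ramsey property for $\prod_{i<n}\Emb(\mathbf{X}_i,Z_i)$. Then set $N_i:=(Z_i)_{\nrm{\cdot}}\in\K_i$, let $q_i:Z_i\to N_i$ be the non-expansive quotient map, and let $Z:=\prod_{i<n}N_i$ with $\nrm{(z_j)_j}_{Z,i}:=\nrm{z_i}_{N_i}$. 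Since each $N_i$ carries a genuine norm, $Z$ is separated; and $Z_{\nrm{\cdot}_i}\cong N_i\in\K_i$, so $Z\in\langle\bar{\K}\rangle$. The passage to the normed quotients $N_i$ --- rather than keeping the $Z_i$, whose kernels are forced in Lemma~\ref{ARP1} precisely because $\mathbf{X}_i$ has a kernel --- is exactly what keeps $Z$ separated and hence in $\langle\bar{\K}\rangle$.

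Now take a colouring $c:\Emb(X,Z)\to r$. For $(\gamma_i)_{i<n}\in\prod_{i<n}\Emb(\mathbf{X}_i,Z_i)$ the tuple $(q_i\circ\gamma_i)_{i<n}$ lies in $\Emb(X,Z)$ by (iii), so put $\widetilde c\big((\gamma_i)_i\big):=c\big((q_i\circ\gamma_i)_i\big)$. By the product Ramsey property there are $\rho_i\in\Emb(\mathbf{Y}_i,Z_i)$ and $s<r$ with $\prod_{i<n}\rho_i\circ\Emb(\mathbf{X}_i,\mathbf{Y}_i)$ $\varep$-monochromatic for $\widetilde c$ with colour $s$. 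Set $\rho:=(q_i\circ\rho_i)_{i<n}:Y\to Z$, again in $\Emb(Y,Z)$. For $\eta\in\Emb(X,Y)$, injectivity of $\eta$ together with $\nrm{\eta(x)}_{Y,i}=\nrm{x}_{X,i}$ gives $\eta\in\Emb(\mathbf{X}_i,\mathbf{Y}_i)$ for each $i<n$, hence $(\rho_i\circ\eta)_{i<n}$ lies in the monochromatic product and there is $(\theta_i)_{i<n}\in\prod_{i<n}\Emb(\mathbf{X}_i,Z_i)$ with $\widetilde c\big((\theta_i)_i\big)=s$ and $\nrm{\rho_i\circ\eta-\theta_i}\leq\varep$ for all $i$. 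Then $\psi:=(q_i\circ\theta_i)_{i<n}\in\Emb(X,Z)$ has $c(\psi)=s$, and since $q_i$ is non-expansive and $\nrm{\cdot}_{Z,i}$ is the $i$-th coordinate seminorm, $\nrm{\rho\circ\eta-\psi}_{Z,i}\leq\nrm{\rho_i\circ\eta-\theta_i}\leq\varep$ for every $i<n$; thus $\rho\circ\eta\in(c^{-1}\{s\})_\varep$, and as $\eta$ was arbitrary $\rho\circ\Emb(X,Y)$ is $\varep$-monochromatic.

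For the graded addendum, assume $\K_i=\K$ for all $i$ with $\K$ closed under finite $\ell_\infty$-sums, and $X,Y$ graded. I would repeat the construction with the same $N_i\in\K$ but equip $Z:=\prod_{i<n}N_i$ with the increasing seminorms $\nrm{(z_j)_j}_{Z,i}:=\max_{k\leq i}\nrm{z_k}_{N_k}$; then $Z$ is graded and separated with $Z_{\nrm{\cdot}_i}\cong\bigoplus_{k\leq i}^{\ell_\infty}N_k\in\K$ by the closure hypothesis. Gradedness of $X$ makes every coordinatewise-seminorm-preserving tuple a multi-isometric embedding into this $Z$ as well, so the maps $(q_i\circ\gamma_i)_i$, $\rho$, $\psi$ remain legitimate, and the final estimate goes through using $\nrm{x}_{X,k}\leq\nrm{x}_{X,j}$ for $k\leq j$ and $\nrm{(\rho_k\circ\eta-\theta_k)(x)}_{Z_k}\leq\nrm{x}_{X,k}\,\nrm{\rho_k\circ\eta-\theta_k}\leq\varep$ whenever $\nrm{x}_{X,j}\leq1$. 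I expect the main point needing care to be precisely this interface: Lemma~\ref{ARP1} produces a \emph{product} of single-seminorm embedding spaces, whereas we must colour the single multi-seminormed space $\Emb(X,Z)$, and a multi-isometric embedding into a product is determined by --- but not freely identified with --- a tuple of coordinate maps; checking that the diagonal assignment $(\gamma_i)_i\mapsto(q_i\circ\gamma_i)_i$ lands in $\Emb(X,Z)$, and that $\rho$ and all monochromatic witnesses can be taken of this coordinatewise form, is where separatedness of $X,Y$ and the passage to normed quotients are essential.
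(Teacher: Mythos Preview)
Your proof is correct and follows essentially the same route as the paper: apply Lemma~\ref{ARP1} to the coordinate seminormed spaces, assemble the resulting $Z_i$ into a product $Z$, pull back the colouring along the diagonal assignment $(\gamma_i)_i\mapsto(\gamma_i(\,\cdot\,))_i$, and read off $\rho$ from the monochromatic $\rho_i$. Your additional step of passing to the normed quotients $N_i=(Z_i)_{\nrm{\cdot}}$ before forming the product is a refinement not present in the paper's proof; it guarantees that $Z$ is genuinely separated (the paper's $Z=\prod Z_j$ with the raw seminormed $Z_j$ need not be), which is what membership in $\langle\bar{\K}\rangle$ formally requires, and it costs nothing since each $q_i$ is a seminorm isometry.
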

\begin{proof}
Fix all parameters and apply the previous lemma to $(X,\nrm{\cdot}_j)_{j=1}^n$,  $(Y,\nrm{\cdot}_j)_{j=1}^n$, $\varep$ and $r$ to find the corresponding $Z_1,\dots, Z_n$. Let $Z:= \prod_{j=1}^n Z_j$, and for each $j=1,\dots,n$, let $$\nrm{(z_1,\dots,z_n)}_j:= \nrm{z_j}_{Z_j}.$$ Note that $Z \in \langle \bar{\K}\rangle_{=n}$ by construction. We claim that $Z$ witnesses the ARP for the given parameters. For suppose that $c: \Emb(X,Z)\to r$ is a colouring. Given a sequence $\vec{\gamma} = (\gamma_j)_j \in \prod_{j=1}^n \Emb((X,\nrm{\cdot}_j), Z_j)$, define a mapping $F(\vec{\gamma}) : X \rightarrow Z$ by $$F(\vec{\gamma})(x) = (\gamma_1(x), \dots, \gamma_n(x)).$$ Observe that $F(\vec{\gamma}) \in \Emb(X,Z)$ since each $\gamma_j$ is a multi-isometric embedding. Using this mapping, define an induced colouring $\widehat c: \prod_{j=1}^n \Emb((X,\nrm{\cdot}_j), Z_j)\to r$, by $\widehat{c}(\vec{\gamma}) = c(F(\vec{\gamma}))$ where $\vec{\gamma} = (\gamma_j)_j$. By definition of each $Z_j$, there are $\rho_j \in \Emb((Y,\|\cdot\|_j), Z_j)$ such that  $$\prod_{j=1}^n \rho_j \circ \Emb((X,\|\cdot\|_j),(Y_j,\|\cdot\|_j)) \text{ is $\varep$-monochromatic}.$$ Define $\rho \in \Emb(Y,Z)$ by $\rho(y) = (\rho_1(y), \dots, \rho_n(y))$. Note that $\rho \circ \eta = F((\rho_j \circ \eta)_j)$ and so $c(\rho \circ \eta) = \widehat{c}((\rho_j \circ \eta)_j)$. In particular, it follows that $\rho \circ \Emb(X,Y)$ is $\varep$-monochromatic.

In the case where $\K_n = \K$ for all $n$ and $\K$ is closed under $\ell_\infty$-sums, we work with the same underlying space $Z$ but we instead equip it with the sequence of seminorms given by $$\nrm{(z_1,\dots,z_n)}_j:= \max_{i \leq j} \nrm{z_i}_{Z_i}.$$ Then $Z_{\|\cdot\|_j}$ is multi-isometric to the $\ell_\infty$-sum of $Z_i, i \leq j$, and so $Z \in \langle \bar{\K}\rangle_{=n}$ by our additional assumption on $\K$. The rest of the proof is identical to that of the general case.
\end{proof}  

By appealing to the various known approximate Ramsey properties of classes of finite-dimensional normed spaces as considered in \cite{BLLM, FLMT}, we can apply Theorem \ref{KPT} and Proposition \ref{ARP} together to obtain the following result. The notation used below corresponds to that of Example \ref{examples}.

\begin{thm}
The following groups are extremely amenable when equipped with the topology of pointwise convergence:
	\begin{enumerate}[(1)]
		\item $\Iso(\GG^\omega, (\|\cdot\|_n)_{n < \alpha})$ and $\Iso(\GG^\omega, (\|\cdot\|'_n)_{n <\alpha})$ for each $\alpha \leq \omega$. In particular, the multi-isometry group of the separable (graded) Fr\'echet space of almost universal disposition for the class $\M_\omega$ (resp. $\G_\omega$) is extremely amenable.
		\item $\Iso(\Flim(\M_{<\omega}^{\mathcal H}))$.
		\item $\Iso(\Flim(\M_{<\omega}^{(p_n)}))$ for any sequence $(p_n) \subseteq [1,\infty[$ with $p_n \not \in \{4,6,8,\dots\}$.
	\end{enumerate} 
\end{thm}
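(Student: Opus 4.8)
The plan is, in each case, to recognize the underlying space as a Fra\"iss\'e Fr\'echet space -- or, for finite $\alpha$, an approximately ultrahomogeneous multi-seminormed space -- and then to apply the Kechris--Pestov--Todor\v{c}evi\'c correspondence, Theorem \ref{KPT}. For (1) this uses the identification of $(\GG^\omega,(\|\cdot\|_n)_{n<\omega})$ with $\Flim(\G_{<\omega})$ and of $(\GG^\omega,(\|\cdot\|'_n)_{n<\omega})$ with $\Flim(\M_{<\omega})$ obtained at the end of Section 3, together with Example \ref{ex}(3) for the truncated spaces when $\alpha<\omega$; for (2) and (3) the spaces are already presented as $\Flim(\M_{<\omega}^{\mathcal H})$ and $\Flim(\M_{<\omega}^{(p_n)})$ in Example \ref{ex}(4),(5). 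In every case the space $E$ is infinite-dimensional and, being a Fra\"iss\'e limit, approximately ultrahomogeneous, so by Theorem \ref{KPT} it suffices to show that $\Age_n(E)$ has the approximate Ramsey property for every $n$ with $1\le n\le\lambda_E$.

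For this I would first note, via Theorem \ref{fraisse} and the Fra\"iss\'e correspondence, that $\Age_n(E)$ coincides, up to the equivalence $\equiv$, with the length-$n$ part of the relevant Fra\"iss\'e class of multi-seminormed spaces -- and that, as the ARP is $\equiv$-invariant (a multi-isometry transports spaces of embeddings, with the seminorms used to define the ARP, onto one another), it is enough to verify the ARP for this length-$n$ class. On its \emph{separated} members, this is exactly what Proposition \ref{ARP} provides, applied to the collection $\bar{\K}=\{\K_j\}_j$ in which $\K_j$ is, respectively, the class of all finite-dimensional normed spaces (case (1)), the class of all finite-dimensional Hilbert spaces, i.e.\ $\Age(L_2[0,1])$ (case (2)), or $\Age(L_{p_j}[0,1])$ (case (3)). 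Each of these normed classes has the ARP: the case of all normed spaces is the main result of \cite{BLLM}, obtained from the Dual Ramsey Theorem, and the classes $\Age(L_p[0,1])$ with $p\notin\{4,6,8,\dots\}$ are covered by \cite{FLMT,BLLM}. When a \emph{graded} witness is required -- for the graded spaces in (1) -- I would instead invoke the graded refinement of Proposition \ref{ARP}, which applies since the class of all finite-dimensional normed spaces is closed under $\ell_\infty$-sums.

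The point that needs the most care -- and the one I expect to be the main obstacle -- is that $\Age_n(E)$ also contains multi-seminormed spaces that are not separated by their first $n$ seminorms, to which Proposition \ref{ARP} does not directly apply; moreover such a space admits no multi-isometric embedding into a separated space of the same length, so its Ramsey partners must themselves be non-separated. I would reduce this to the separated case as follows. Given $X,Y$ in $\Age_n(E)$, split off the joint kernels $N_X=\bigcap_{m<n}\ker\|\cdot\|_{X,m}$ and $N_Y=\bigcap_{m<n}\ker\|\cdot\|_{Y,m}$, writing $X=X_0\oplus N_X$ and $Y=Y_0\oplus N_Y$ with $X_0,Y_0$ separated; since the normed classes $\K_j$ are hereditary, $X_0$ and $Y_0$ again belong to the separated length-$n$ class. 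Every multi-isometric embedding out of $X$ decomposes into a multi-isometric embedding of $X_0$ together with an injective linear map $N_X\to N_Z$, and this second datum is annihilated by the pseudometric $\max_{m<n}\|\cdot\|_m$ that governs the ARP; consequently any $r$-colouring of $\Emb(X,Z)$ induces, to within any prescribed error, an $r$-colouring of $\Emb(X_0,Z)$. Applying the separated ARP to $X_0,Y_0$ to obtain a separated $Z_0$, one takes $Z:=Z_0\oplus N$ with $N$ carrying only the zero seminorms and $\dim N\ge\max(\dim N_X,\dim N_Y)$; then $Z$ lies in the (non-separated) length-$n$ class, hence up to $\equiv$ in $\Age_n(E)$ -- here one uses that $\bigcap_{m<n}\ker\|\cdot\|_{E,m}$ is infinite-dimensional -- and a short verification shows that $Z$ witnesses the ARP for $X$ and $Y$. (Equivalently, this reduction may be performed inside $E$ itself through the reformulation of the ARP in terms of colourings of $\Emb(-,E)$ proved just before Theorem \ref{KPT}.) Once the ARP of every $\Age_n(E)$ is established, Theorem \ref{KPT} yields the extreme amenability of $\Iso(E)$, and the finite-$\alpha$ cases in (1) are entirely analogous with $\lambda_E=\alpha$.
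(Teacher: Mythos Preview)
Your proposal is correct and follows the same route as the paper, which simply says the result follows ``by appealing to the various known approximate Ramsey properties of classes of finite-dimensional normed spaces \dots\ [and] apply[ing] Theorem~\ref{KPT} and Proposition~\ref{ARP} together.'' You have supplied the details the paper omits.

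One remark on the part you flag as the main obstacle. Your reduction from non-separated to separated members of $\Age_n(E)$ is valid, but it is not actually needed: the paper's machinery already covers non-separated inputs, even though Proposition~\ref{ARP} is \emph{stated} only for $X,Y\in\langle\bar{\K}\rangle_{=n}$ (separated). The point is that Lemma~\ref{basecase} is formulated for \emph{seminormed} $X,Y$ with $X_{\|\cdot\|},Y_{\|\cdot\|}\in\K$ and produces a seminormed witness $Z$ (namely $Z\times Z$ with the degenerate second coordinate), and Lemma~\ref{ARP1} and the proof of Proposition~\ref{ARP} use only that each coordinate $(X,\|\cdot\|_j)$ is seminormed with quotient in $\K_j$ --- separatedness of $X$ plays no role. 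The resulting $Z=\prod_j Z_j$ is itself non-separated, but after extending by one further (semi)norm from the appropriate class it lies in the Fra\"iss\'e class and hence, up to $\equiv$, in $\Age_n(E)$. So Proposition~\ref{ARP} (read through its proof) already yields the ARP on all of $\Age_n(E)$. Your kernel-splitting argument gives an alternative, self-contained way to reach the same conclusion; it is essentially the content of Lemma~\ref{basecase} unpacked at the multi-seminormed level.
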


The extreme amenability of the groups $\Iso(\GG^\omega, (\|\cdot\|_n)_{n < \omega})$ and $\Iso(\GG^\omega, (\|\cdot\|'_n)_{n <\omega})$ should naturally be compared to the extreme amenability of $\Iso(\GG)$. The following important questions remain open:

\begin{que}
Are $\Iso(\GG^\omega, (\|\cdot\|_n)_{n < \omega})$ and $\Iso(\GG^\omega, (\|\cdot\|'_n)_{n <\omega})$ topologically distinguishable from $\Iso(\GG)$? Are $\Iso(\GG^\omega, (\|\cdot\|_n)_{n < \omega})$ and $\Iso(\GG^\omega, (\|\cdot\|'_n)_{n <\omega})$ universal Polish groups?
\end{que}

\section{Concluding remarks}

Throughout this paper we have adopted a very particular viewpoint in order to develop Fra\"iss\'e theory for Fr\'echet spaces. Specifically, we have been working with the category of multi-seminormed spaces with morphisms given by seminorm-preserving linear mappings. A natural question is whether or not a similar theory can be developed for more relaxed categories which capture the notion of a Fr\'echet space. For instance, the following general problem remains open:

\begin{prob}
Develop a theory of \emph{metric} Fr\'echet spaces, i.e. pairs of the form $(X,d)$ where $X$ is a vector space and $d$ is a translation-invariant metric inducing a Fr\'echet topology on $X$.
\end{prob}

In the above setting, one may be able to use more general machinery in order to develop a Fra\"iss\'e theory for such spaces, e.g. as in \cite{BY,Kubis}. It is unclear if the use of such machinery is possible in the setting of multi-seminormed spaces, since in general one needs to work with an arbitrarily large (finite) number of seminorms.

One specific corollary of our construction is that the spaces $\Flim(\M_{<\omega})$ and $\Flim(\G_{<\omega})$ are -- in addition to their defining properties -- Fra\"iss\'e for the classes $\M_\omega$ and $\G_\omega$, respectively, by virtue of being multi-isometric to the spaces constructed in \cite{BKK}. Since the construction of the Fra\"iss\'e limit makes use of the fact that the elements of a given Fra\"iss\'e class are finitely-seminormed, it is not clear that the aforementioned property must be true for general Fra\"iss\'e Fr\'echet spaces. More precisely, given a class $\K \subseteq \M_{<\omega}$, one can define a class $\K_\omega \subseteq \M_\omega$ by declaring that $(X,(\|\cdot\|_n)_{n<\omega}) \in \K_\omega$ exactly when 
$$(X,(\|\cdot\|_n)_{n<m}) \in \K \text{ for all $m<\omega$}.$$ This motivates the following:

\begin{prob}\label{prob2}
If $E$ is a $\K$-Fra\"iss\'e Fr\'echet space, is $E$ necessarily $\K_\omega$-Fra\"iss\'e?
\end{prob}

The proof that $\GG^\omega$ is $\G_\omega$-Fra\"iss\'e (with an appropriate sequence of seminorms) from \cite{BKK} makes use of the existence of a universal operator $\pi : \GG \rightarrow \GG$ constructed in \cite{GK}. This operator can essentially be seen as a \emph{Fra\"iss\'e} operator in a precise sense (see, e.g., \cite{Lupini2018}). The operator constructed in \cite{GK} is the Gurarij analogue of Rota's universal operator on $\ell_2$, as in \cite{Caradus, Rota}. Thus, in order to shed light on a possible affirmative answer to Problem \ref{prob2}, one may need to construct similar operators for an arbitrary Fra\"iss\'e Banach space in place of $\GG$.

Finally, we mention that throughout the paper we have not assumed any condition on the continuity of the relevant mappings between multi-seminormed spaces. Since seminorm-preserving mappings (as defined above) need not be continuous, the following general problem presents itself:

\begin{prob}\label{prob3}
For various classes $\K$ of finite-dimensional multi-seminormed spaces, study the notions of $\K$-universality and $\K$-Fra\"iss\'e where the embeddings are in addition assumed to be continuous. In particular, is there a version of the Fra\"iss\'e theory for multi-seminormed spaces developed above in which the associated embeddings are also required to be continuous?
\end{prob}

\nocite{BLLM_2}
\bibliographystyle{abbrv}
\bibliography{bibliography}

\begin{thebibliography}{10}

\bibitem{Banach}
S.~Banach.
\newblock {\em Th\'{e}orie des op\'{e}rations lin\'{e}aires}.
\newblock Chelsea Publishing Co., New York, 1955.

\bibitem{BKK}
C.~Bargetz, J.~K\k{a}kol, and W.~Kubi\'{s}.
\newblock A separable {F}r\'{e}chet space of almost universal disposition.
\newblock {\em J. Funct. Anal.}, 272(5):1876--1891, 2017.

\bibitem{BLLM_2}
D.~Barto\v{s}ov\'{a}, J.~Lopez-Abad, M.~Lupini, and B.~Mbombo.
\newblock The {R}amsey properties for {O}perator spaces and noncommutative
  {C}hoquet simplices.
\newblock Preprint, arXiv:2006.04799, 2020.

\bibitem{BLLM}
D.~Barto\v{s}ov\'{a}, J.~Lopez-Abad, M.~Lupini, and B.~Mbombo.
\newblock The {R}amsey property for {B}anach spaces and {C}hoquet simplices.
\newblock {\em J. Eur. Math. Soc.}, to appear.

\bibitem{BY1}
I.~Ben~Yaacov.
\newblock The linear isometry group of the {G}urarij space is universal.
\newblock {\em Proc. Amer. Math. Soc.}, 142(7):2459--2467, 2014.

\bibitem{BY}
I.~Ben~Yaacov.
\newblock Fra\"{\i}ss\'{e} limits of metric structures.
\newblock {\em J. Symb. Log.}, 80(1):100--115, 2015.

\bibitem{CFR}
F.~Cabello~S\'{a}nchez, V.~Ferenczi, and B.~Randrianantoanina.
\newblock On {M}azur rotations problem and its multidimensional versions.
\newblock {\em Preprint}, arXiv:2012.08344.

\bibitem{Caradus}
S.~R. Caradus.
\newblock Universal operators and invariant subspaces.
\newblock {\em Proc. Amer. Math. Soc.}, 23:526--527, 1969.

\bibitem{Carothers}
N.~L. Carothers.
\newblock {\em A short course on {B}anach space theory}, volume~64 of {\em
  London Mathematical Society Student Texts}.
\newblock Cambridge University Press, Cambridge, 2005.

\bibitem{FLMT}
V.~Ferenczi, J.~Lopez-Abad, B.~Mbombo, and S.~Todorcevic.
\newblock Amalgamation and {R}amsey properties of {$L_ p$} spaces.
\newblock {\em Adv. Math.}, 369:107190, 76, 2020.

\bibitem{Fraisse}
R.~Fra\"{\i}ss\'{e}.
\newblock Sur l'extension aux relations de quelques propri\'{e}t\'{e}s des
  ordres.
\newblock {\em Ann. Sci. Ecole Norm. Sup. (3)}, 71:363--388, 1954.

\bibitem{GK}
J.~Garbuli\'{n}ska-W\c{e}grzyn and W.~Kubi\'{s}.
\newblock A universal operator on the {G}urari\u{\i} space.
\newblock {\em J. Operator Theory}, 73(1):143--158, 2015.

\bibitem{GR}
R.~L. Graham and B.~L. Rothschild.
\newblock Ramsey's theorem for {$n$}-parameter sets.
\newblock {\em Trans. Amer. Math. Soc.}, 159:257--292, 1971.

\bibitem{Gurarij}
V.~I. Gurari\u{\i}.
\newblock Spaces of universal placement, isotropic spaces and a problem of
  {M}azur on rotations of {B}anach spaces.
\newblock {\em Sibirsk. Mat. \v{Z}.}, 7:1002--1013, 1966.

\bibitem{Hamilton}
R.~S. Hamilton.
\newblock The inverse function theorem of {N}ash and {M}oser.
\newblock {\em Bull. Amer. Math. Soc. (N.S.)}, 7(1):65--222, 1982.

\bibitem{Hodges}
W.~Hodges.
\newblock {\em A shorter model theory}.
\newblock Cambridge University Press, Cambridge, 1997.

\bibitem{K77}
N.~J. Kalton.
\newblock Universal spaces and universal bases in metric linear spaces.
\newblock {\em Studia Math.}, 61(2):161--191, 1977.

\bibitem{KPT}
A.~S. Kechris, V.~G. Pestov, and S.~Todorcevic.
\newblock Fra\"{\i}ss\'{e} limits, {R}amsey theory, and topological dynamics of
  automorphism groups.
\newblock {\em Geom. Funct. Anal.}, 15(1):106--189, 2005.

\bibitem{Kubis}
W.~Kubi\'{s}.
\newblock Metric-enriched categories and approximate {F}ra\"ss\'e limits.
\newblock {\em Preprint}, arXiv:1210.6506.

\bibitem{KS}
W.~Kubi\'{s} and S.~Solecki.
\newblock A proof of uniqueness of the {G}urari\u{\i} space.
\newblock {\em Israel J. Math.}, 195(1):449--456, 2013.

\bibitem{Kutateladze}
S.~S. Kutateladze.
\newblock {\em Fundamentals of functional analysis}, volume~12 of {\em Kluwer
  Texts in the Mathematical Sciences}.
\newblock Kluwer Academic Publishers Group, Dordrecht, 1996.
\newblock Translated from the second (1995) edition.

\bibitem{Lupini2018}
M.~Lupini.
\newblock Fra\"{\i}ss\'{e} limits in functional analysis.
\newblock {\em Adv. Math.}, 338:93--174, 2018.

\bibitem{Lusky}
W.~Lusky.
\newblock The {G}urarij spaces are unique.
\newblock {\em Arch. Math. (Basel)}, 27(6):627--635, 1976.

\bibitem{MV}
R.~Meise and D.~Vogt.
\newblock {\em Introduction to functional analysis}, volume~2 of {\em Oxford
  Graduate Texts in Mathematics}.
\newblock The Clarendon Press, Oxford University Press, New York, 1997.
\newblock Translated from the German by M. S. Ramanujan and revised by the
  authors.

\bibitem{MT}
J.~Melleray and T.~Tsankov.
\newblock Extremely amenable groups via continuous logic.
\newblock {\em Preprint}, arXiv:1708.01317.

\bibitem{Pestov}
V.~Pestov.
\newblock {\em Dynamics of infinite-dimensional groups}, volume~40 of {\em
  University Lecture Series}.
\newblock American Mathematical Society, Providence, RI, 2006.
\newblock The Ramsey-Dvoretzky-Milman phenomenon, Revised edition of {{\i}t
  Dynamics of infinite-dimensional groups and Ramsey-type phenomena} [Inst.
  Mat. Pura. Apl. (IMPA), Rio de Janeiro, 2005; MR2164572].

\bibitem{Rolewicz}
S.~Rolewicz.
\newblock {\em Metric linear spaces}.
\newblock PWN-Polish Scientific Publishers, Warsaw, 1972.
\newblock Monografie Matematyczne, Tom. 56. [Mathematical Monographs, Vol. 56].

\bibitem{Rota}
G.-C. Rota.
\newblock On models for linear operators.
\newblock {\em Comm. Pure Appl. Math.}, 13:469--472, 1960.

\bibitem{Semadeni}
Z.~Semadeni.
\newblock {\em Banach spaces of continuous functions. {V}ol. {I}}.
\newblock PWN---Polish Scientific Publishers, Warsaw, 1971.
\newblock Monografie Matematyczne, Tom 55.

\bibitem{Simon}
J.~Simon.
\newblock {\em Banach, {F}r\'{e}chet, {H}ilbert and {N}eumann spaces}.
\newblock Mathematics and Statistics Series. ISTE, London; John Wiley \& Sons,
  Inc., Hoboken, NJ, 2017.
\newblock Analysis for PDEs set. Vol. 1.

\bibitem{vogt1987}
D.~Vogt.
\newblock Operators between {F}r{\'e}chet spaces.
\newblock Analysis Conference Manila, 1987.

\end{thebibliography}

\end{document}